\documentclass{article}

\usepackage{amsthm}
\usepackage[margin=1.2in]{geometry}
\usepackage{bm}
\usepackage{amsmath,amssymb}
\usepackage{xcolor}
\usepackage{subcaption}
\usepackage{pdflscape}
\usepackage{siunitx}
\usepackage{multirow}
\usepackage[ruled,vlined,linesnumbered]{algorithm2e}
\usepackage{graphicx}
\usepackage{booktabs,makecell}
\usepackage{adjustbox}
\usepackage{diagbox}
\usepackage{float}

\theoremstyle{plain}
\newtheorem{theorem}{Theorem}[section]
\newtheorem{lemma}[theorem]{Lemma}
\newtheorem{corollary}[theorem]{Corollary}
\newtheorem{proposition}[theorem]{Proposition}

\theoremstyle{definition}

\theoremstyle{remark}
\newtheorem{remark}[theorem]{Remark}

\everymath{\displaystyle}

\newcommand{\CG}{\texttt{CG}~}


%
%
%







\begin{document}

\title{Auxiliary Splines Space Preconditioning for B-Splines Finite Elements: The case of $\bm{H}(\bm{curl},\Omega)$ and $\bm{H}(div,\Omega)$ elliptic problems}

\author{A. El Akri \\ Lab. MSDA, Mohammed VIPolytechnic University \\ Ben Guerir,
Morocco \\ e-mail: abdeladim.elakri@um6p.ma \and K. Jbilou \\ Lab. MSDA, Mohammed VI Polytechnic University\\ Ben Guerir, Morocco, and \\ Lab. LMPA, University of Littoral Côte d’Opale, France \\ e-mail: jbilou@univ-littoral.fr, \and A. Ratnani \\ Lab. MSDA, Mohammed VI Polytechnic University\\ Ben Guerir, Morocco \\ e-mail: ahmed.ratnani@um6p.ma}


\date{\today}


\maketitle

\begin{abstract}
This paper presents a study of large linear systems resulting from the regular $B$-splines finite element discretization of the $\bm{curl}-\bm{curl}$ and $\bm{grad}-div$ elliptic problems on unit square/cube domains. We consider systems subject to both homogeneous essential and natural boundary conditions. Our objective is to develop a preconditioning strategy that is optimal and robust, based on the Auxiliary Space Preconditioning method proposed by Hiptmair et al. \cite{hiptmair2007nodal}. Our approach is demonstrated to be robust with respect to mesh size, and we also show how it can be combined with the Generalized Locally Toeplitz (GLT) sequences analysis presented in \cite{mazza2019isogeometric} to derive an algorithm that is optimal and stable with respect to spline degree. Numerical tests are conducted to illustrate the effectiveness of our approach.
\end{abstract}

\section{Introduction}
The {\em Isogeometric Analysis} (IgA) is a mathematical approach that combines {\em Finite Element Methods} (FEMs) with {\em Computer-Aided Design} (CAD) to design and analyze the numerical approximation of {\em Partial Differential Equations} (PDEs). Like FEM, IgA formulates problems through variational methods and specifies a finite-dimensional subspace for the solution. However, IgA employs the same functions used to describe the underlying domain, typically $B$-spline or Non-Uniform Rational $B$-spline (NURBS) functions commonly used in CAD. This approach offers several advantages over FEM, including exact geometry, which eliminates geometric approximation errors, and the use of $B$-spline functions, which makes higher $C^p$-continuous interpolation more practical than standard Lagrange and Hermite polynomials used in FEM.

The field of IgA has rapidly developed in recent years, with significant contributions since the pioneering work by Hughes in 2005 \cite{hughes2005isogeometric}. This approach has been applied in various areas, including electromagnetism \cite{buffa2011isogeometric1,buffa2010isogeometric,ratnani2012arbitrary}, incompressible fluid dynamics \cite{buffa2011isogeometric2}, fluid-structure interaction \cite{bazilevs2012isogeometric,hsu2011blood}, structural and contact mechanics \cite{konyukhov2012geometrically,shojaee2012free}, plasmas physics problems \cite{ratnani2012isogeometric}, and kinetic systems \cite{abiteboul2011solving,back2011axisymmetric,crouseilles2012isogeometric}, among others. For a comprehensive overview, readers can refer to the review paper by Da Veiga et al. \cite{da2014mathematical} and Cottrell et al. \cite{cottrell2009isogeometric}.

Despite the large success of the method,  it is important to note that dealing with higher-order IgA finite elements can be challenging. Specifically,  using higher-order $B$-spline functions can generate huge, sparse, ill-conditioned matrices. Although the discrete systems produced by IgA methods are typically better conditioned than those produced by standard finite elements, their condition numbers cannot be uniformly bounded with respect to the discretization parameter $h$, and can even grow rapidly as $h$ approaches zero. For example, this is the case for the Full-Wave problem with high wave numbers \cite{mazza2019spectral}. (See also \cite{gahalaut2014condition} for explicit bounds of the spectral condition number in the case of the Poisson equation). As a result, direct solvers are not suitable for IgA discrete systems, and even standard iterative methods may fail. Preconditioning is therefore necessary to obtain convergence in a reasonable amount of time.

The literature offers several techniques to address the problem of preconditioning IgA discrete systems. These include overlapping Schwarz preconditioners \cite{da2013isogeometric,da2012overlapping}, non-overlapping decomposition methods \cite{beirao2013bddc,buffa2013bpx,da2014isogeometric,da2017adaptive}, FETI-type preconditioners \cite{bosy2020domain,kleiss2012ieti,pavarino2016isogeometric}, multilevel algorithms \cite{cho2020optimal,de2021two,gahalaut2013algebraic}, multigrid methods \cite{donatelli2015robust,gahalaut2013multigrid}, and preconditioning based on the solution of Sylvester equations \cite{sangalli2016isogeometric}.

A review of the current state of the art indicates a growing interest in developing efficient and rapid IgA preconditioning techniques in recent years. However, most research has focused on scalar elliptic problems, with limited technical generalizations to linear elasticity systems. To the best of our knowledge, only a few papers \cite{mazza2019isogeometric,mazza2019spectral} have studied $\bm{H}(\bm{curl})$ and $\bm{H}(div)$ problems.  In these works, the construction of solvers exploits a detailed spectral analysis of the involved matrices based on the theory of the Generalized Locally Toeplitz (GLT) sequences. However, results of practical interest can be precisely developed only if addressed to specific models. In contrast, this paper presents a more general and systematic approach, providing abstract techniques that can be applied to a broader range of problems.

We shall consider two model problems; the $\bm{curl}-\bm{curl}$ problem: finds a vector field $\bm{u} \, : \, \overline{\Omega} \rightarrow \mathbb{R}^3$ such that
\begin{equation}\label{eq:curl-curl}
\bm{curl}\,  \bm{curl} \, \bm{u} + \tau \bm{u} = \bm{f}, \quad \text{in } \Omega,
\end{equation} 
and the $\bm{grad}-div$ problem: finds $\bm{u} \, : \, \overline{\Omega} \rightarrow \mathbb{R}^3$ such that
\begin{equation}\label{eq:div-div}
- \bm{gard}\, div\, \bm{u} + \tau \bm{u} = \bm{f}, \quad \text{in } \Omega,
\end{equation}
and 
subject to both homogeneous natural and essential boundary conditions and where $\bm{f} \,:\, \Omega  \rightarrow \mathbb{R}^3$ is a vector field and $\tau$ is a small positive parameter. The variational forms of these problems can be written in unified form as follows: Finds $\bm{u} \in \mathcal{H}(\mathcal{D}, \Omega)$ such that 
\begin{equation}\label{introduction-variational-formulation}
(D\bm{u}, D\bm{v})_{\left( L^2(\Omega) \right)^3} + \tau (\bm{u},\bm{v})_{\left( L^2(\Omega) \right)^3} = (\bm{f}, \bm{v})_{\left( L^2(\Omega) \right)^3}, \quad \forall v \in \mathcal{H}(\bm{D}, \Omega),
\end{equation}
where $D \in \{\bm{curl}, div\}$ and the space $\\mathcal{H}(\mathcal{D}, \Omega)$ fulfill the boundary conditions on the case of essential boundary conditions (see the next section for a precise definition).

Preconditioning for these types of problems is particularly challenging. This is because, as pointed out in \cite{hiptmair2007nodal}, the operator $\mathcal{D}$ has a large null space. Unlike the scalar Laplacian operator, which has a null space of dimension one, the kernel of $\mathcal{D}$ has infinite dimension. Another challenge when discretizing \eqref{introduction-variational-formulation} is the loss of coercivity as $\tau \rightarrow 0$. While the continuous problem is well-posed, discrete stability can only be achieved with very fine meshes, which leads to a rapidly growing spectral condition number as $\tau$ approaches $0$. As a result, the preconditioning approach must not only consider the structure of the space $\mathcal{H}(\mathcal{D}, \Omega)$ but also be robust with respect to the parameter $\tau$.

Over the last decades, a promising technique, called Auxiliary Space Preconditioning (ASP) method \cite{chen2015auxiliary,hiptmair2006auxiliary,hiptmair2007nodal,kolev2008auxiliary,kolev2009parallel,nepomnyaschikh1991decomposition,xu1992iterative}, has leads to a general abstract framework for the derivation of stable preconditioners in the case of conforming finite element discretizations. The basic idea of ASP is to transfer the original problem to an auxiliary space where it is easier to solve, then transfer the solution back to the original space and correct the error between the auxiliary space and the full space by applying a smoothing scheme. The choice of the auxiliary space is typically based on a stable decomposition of the space $\mathcal{H}(\mathcal{D}, \Omega)$, known as a regular decomposition \cite{birman1987l2,buffa2002traces,costabel1990remark,hiptmair2007nodal,pasciak2002overlapping,zhao2004analysis}. However, the main challenge in developing the method lies in adapting these regular space decompositions to the discrete level.

The ASP method has already been successfully applied to various preconditioning problems for large-scale finite element systems. In this paper, we extend the method to the isogeometric context, building upon the work presented in \cite{hiptmair2007nodal}. As a first step, we assume that $\Omega = (0, 1)^d$, where $d=2$ or $3$.

The paper is organized as follows. Section \ref{preliminaries} presents the notations, definitions, and preliminary results relevant to our analysis. We introduce the abstract theory of ASP method and briefly recall the notations for $B$-splines spaces and related de Rham sequence. In Section \ref{sec:asp}, we present the main theoretical result of the paper, which is a uniform discrete regular decomposition. We then use this regular decomposition to design robust and efficient ASP preconditioners. Section \ref{sec:numerical-results} provides several numerical examples for both $2$-$d$ and $3$-$d$ cases to illustrate the performance of our preconditioners. Finally, Section \ref{conclusions} concludes the paper.

\begin{remark}
Although the results presented in the paper are applicable to both $2$-$d$ and $3$-$d$ problems, the focus of the analysis is on the $3$-$d$ setting. However, the results for the $2$-$d$ case can be easily derived from those of the $3$-$d$ problems.
\end{remark}

\section{Preliminaries}\label{preliminaries}
In this section, we establish the notation and recall some preliminary results which will be used later in the paper. Firstly, we provide the basic definitions and properties of Sobolev spaces, and we introduce a regular decomposition of space $\mathcal{H}(\mathcal{D}, \Omega)$. This decomposition is critical for our analysis of the discrete $\bm{curl}-\bm{curl}$ and $\bm{grad}-div$ problems. Additionally, we summarize the key aspects of the abstract theory of the Auxiliary Space Preconditioning method. Finally, we introduce the Isogeometric discrete spaces and their relevant properties.

\subsection{Functional Spaces: Notation and Results}\label{subsec:functional-spaces}
In this paper, we will work with Sobolev spaces. We will provide standard notations, but for a more detailed presentation, we refer the reader to \cite{adams2003sobolev,girault2012finite,monk2003finite}. For the unit cube (or square) domain $\Omega$, we denote by $L^2(\Omega)$ the Hilbert space of Lebesgue square-integrable functions on $\Omega$, equipped with the standard $L^2(\Omega)$ norm. Given a positive integer $r$, we denote by $H^r(\Omega)$ the Sobolev space of order $r$ on $\Omega$, which is the space of functions in $L^2(\Omega)$ with $r$th-order derivatives, in the sense of distributions, also in $L^2(\Omega)$, endowed with the standard norm $\|\cdot\|_{H^r(\Omega)}$. By definition, we let $H^0(\Omega)=L^2(\Omega)$. We denote by $H^r_0(\Omega)$ the subspaces of functions with Dirichlet boundary conditions. Note that by definition, we have
$$
L^2_0(\Omega) =\left\{ u \in L^2(\Omega) :\; \int_\Omega u = 0\right\}.
$$
We use boldface letter notation for vectorial spaces, i.e., $\bm{L}^2(\Omega)=\left(L^2(\Omega)\right)^3$, $\bm{H}^r(\Omega)=\left(H^r(\Omega)\right)^3$ and $\bm{H}^r_0(\Omega)=\left(H^r_0(\Omega)\right)^3$.

We also consider the following spaces
\begin{align*}
& \bm{H}( \bm{curl},\, \Omega) =\left\{ \bm{u} \in \bm{L}^2(\Omega),\: \, \bm{curl} \, \bm{u} \in \bm{L}^2(\Omega) \right\},\\
& \bm{H}(div,\, \Omega) =\left\{ \bm{u} \in \bm{L}^2(\Omega),\: \, div \, \bm{u} \in L^2(\Omega) \right\},
\end{align*} 
equipped with their default inner products
\begin{align*}
& (\bm{u}, \bm{v})_{\bm{H}( \bm{curl}, \, \Omega)} = (\bm{u}, \bm{v})_{\bm{L}^2(\Omega)} + (\bm{curl}\,\bm{u}, \bm{curl}\,\bm{v})_{\bm{L}^2(\Omega)},\\
& (\bm{u}, \bm{v})_{\bm{H}( div, \, \Omega)} = (\bm{u}, \bm{v})_{\bm{L}^2(\Omega)} + (div\,\bm{u}, div\,\bm{v})_{L^2(\Omega)}.
\end{align*}
The corresponding norms are denoted by $\|\bm{u}\|_{\bm{H}( \bm{curl}, \, \Omega)}$ and $\|\bm{u}\|_{\bm{H}(div,\, \Omega)}$, respectively.

To deal with the essential boundary conditions, we introduce the spaces
\begin{align*}
& \bm{H}_0( \bm{curl},\, \Omega) = \left\{  \bm{u} \in \bm{H}( \bm{curl},\, \Omega), \; \bm{u}\vert_{\partial \Omega} \times \bm{n} =0\right\},\\
& \bm{H}_0(div,\, \Omega)  = \left\{  \bm{u} \in \bm{H}(div,\, \Omega), \;\bm{u}\vert_{\partial \Omega}\cdot \bm{n} =0\right\},
\end{align*}
where $\bm{n}$ is the unit outward normal of $\partial \Omega$. As subspaces of $\bm{H}( \bm{curl},\, \Omega)$ and $\bm{H}( div,\, \Omega)$,   spaces $\bm{H}_0( \bm{curl},\, \Omega)$ and $\bm{H}_0( div,\, \Omega)$ are endowed with $(\cdot, \cdot)_{\bm{H}( \bm{curl}, \, \Omega)}$ and $(\cdot, \cdot)_{\bm{H}( div, \, \Omega)}$, respectively,  as their default inner products. We write $\|\cdot \|_{\bm{H}_0( \bm{curl}, \, \Omega)}$ and $\| \cdot \|_{\bm{H}_0(div,\, \Omega)}$ for the corresponding norms. It is worth mentioning however that the semi-norms $\|\bm{curl} (\cdot)\|_{ \bm{L}^2(\Omega)}$ and $\|div(\cdot)\|_{ L^2(\Omega)}$ are norms which are equivalent to $\|\cdot \|_{\bm{H}_0( \bm{curl}, \, \Omega)}$ and $\| \cdot \|_{\bm{H}_0(div,\, \Omega)}$ in spaces $\bm{H}_0( \bm{curl},\, \Omega)$ and $\bm{H}_0(div,\, \Omega)$, respectively.

Next we provide regular decomposition for spaces $\bm{H}( \bm{curl}, \, \Omega)$, $\bm{H}(div,\, \Omega)$, \\ $\bm{H}_0( \bm{curl},\, \Omega)$ and $\bm{H}_0(div,\, \Omega)$. For this purpose, following ideas of \cite{hiptmair2007nodal}, we introduce a generic notation $\mathcal{H}(\mathcal{D}, \Omega)$ to indicate any of the four spaces listed above. Here, $\mathcal{D}$ denotes either $\bm{curl}$ or $div$. We also use $\mathcal{D}^-$ and $\mathcal{D}^+$ to represent the differential operators characterizing the null and range spaces of $\mathcal{D}$, respectively. The corresponding Sobolev spaces are denoted by $\mathcal{H}(\mathcal{D}^-, \Omega)$ and $\mathcal{H}(\mathcal{D}^+, \Omega)$. Table \ref{table1} summarizes these notations. \\

With these notations, we have the following result:

\begin{table}[!]
\begin{subtable}{0.3\textwidth} 
    \centering
\begin{tabular}{l|l|l}
$\mathcal{D}$ & $\mathcal{D}^-$ & $\mathcal{D}^+$ \\ \hline
$\bm{curl}$ & $\bm{grad}$ & $div$ \\
$div$ & $\bm{curl}$  & $0$
\end{tabular}
\end{subtable}%
\begin{subtable}{0.3\textwidth} 
    \centering
\begin{tabular}{l|l|l|l}
$\mathcal{H}(\mathcal{D}, \Omega)$ &  $\mathcal{H}(\mathcal{D}^-, \Omega)$ & $\mathcal{H}(\mathcal{D}^+, \Omega)$ & $\bm{X}(\Omega)$ \\ \hline
 $\bm{H}(\bm{curl}, \Omega)$ &  $H^1(\Omega)$ & $\bm{H}(div, \Omega)$ & $\left( H^1(\Omega) \right)^3$\\ 
$\bm{H}_0(\bm{curl}, \Omega)$ & $H^1_0(\Omega)$ &  $\bm{H}_0(div, \Omega)$ & $\left( H^1_0(\Omega) \right)^3$\\
$\bm{H}(div, \Omega)$  &  $\bm{H}(\bm{curl}, \Omega)$ & $L^2(\Omega)$ & $\left( H^1(\Omega) \right)^3$ \\
$\bm{H}_0(div, \Omega)$ &  $\bm{H}_0(\bm{curl}, \Omega)$ & $L^2_0(\Omega)$ & $\left( H^1_0(\Omega) \right)^3$
\end{tabular}
\end{subtable}\hspace{5.cm}
\caption{Definition of operators $\mathcal{D}$, $\mathcal{D}^-$ and $\mathcal{D}^+$, along with their associated Sobolev spaces and of the regular space $\bm{X}(\Omega)$}
\label{table1}
\end{table}

\begin{proposition}
The de Rham complex
\begin{align*}
\begin{array}{cccc}
\mathcal{H}(D^-, \Omega) & \xrightarrow{\quad D^- \quad} & \mathcal{H}(D, \Omega) & \xrightarrow{\quad D \quad} \mathcal{H}(D^+, \Omega)
\end{array}
\end{align*}
is exact, meaning that $D^- \mathcal{H}(D^-, \Omega) = \operatorname{ker}(D)$.
\end{proposition}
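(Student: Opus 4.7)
The plan is to establish both inclusions $D^-\mathcal{H}(D^-,\Omega)\subseteq \ker(D)$ and $\ker(D)\subseteq D^-\mathcal{H}(D^-,\Omega)$ case by case, exploiting the fact that $\Omega=(0,1)^d$ is contractible (indeed convex and simply connected), so that topological obstructions to the standard Poincar\'e lemma vanish. The inclusion $D^-\mathcal{H}(D^-,\Omega)\subseteq \ker(D)$ is elementary in all four cases of Table~\ref{table1}: by the classical vector-calculus identities $\bm{curl}\,\bm{grad}\,\varphi=0$ for $\varphi\in H^1(\Omega)$ (or $H^1_0(\Omega)$), and $\operatorname{div}\,\bm{curl}\,\bm{\psi}=0$ for $\bm{\psi}\in\bm{H}(\bm{curl},\Omega)$ (or $\bm{H}_0(\bm{curl},\Omega)$), interpreted in the sense of distributions and then extended by density. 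I would dispatch this in a single sentence and focus the proof on the reverse inclusion.

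For the nontrivial direction $\ker(D)\subseteq D^-\mathcal{H}(D^-,\Omega)$ I would split the argument into the two values of $D$. For $D=\bm{curl}$: given $\bm{u}\in\bm{H}(\bm{curl},\Omega)$ with $\bm{curl}\,\bm{u}=0$, the simple connectedness of the convex cube yields a scalar potential $\varphi\in H^1(\Omega)$ with $\bm{u}=\bm{grad}\,\varphi$; this is the Sobolev version of the Poincar\'e lemma (see e.g.~Girault--Raviart or Monk). For $D=\operatorname{div}$: given $\bm{u}\in\bm{H}(\operatorname{div},\Omega)$ with $\operatorname{div}\,\bm{u}=0$, the second Poincar\'e lemma on a contractible domain produces a vector potential $\bm{\psi}\in\bm{H}(\bm{curl},\Omega)$ with $\bm{u}=\bm{curl}\,\bm{\psi}$. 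In both cases the explicit potentials can be written via path integrals on the convex cube (e.g.\ integrating along segments from the origin), which makes the regularity statements transparent.

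For the homogeneous-boundary-condition variants the argument needs an extra step. If $\bm{u}\in\bm{H}_0(\bm{curl},\Omega)$ with $\bm{curl}\,\bm{u}=0$, I would first produce $\varphi\in H^1(\Omega)$ with $\bm{u}=\bm{grad}\,\varphi$, then observe that the tangential trace $\bm{u}\times\bm{n}=0$ on $\partial\Omega$ implies $\bm{grad}\,\varphi\times\bm{n}=0$, hence $\varphi$ is locally constant on each connected face; since $\partial\Omega$ is connected we may subtract a constant and obtain $\varphi\in H^1_0(\Omega)$. For $\bm{u}\in\bm{H}_0(\operatorname{div},\Omega)$ with $\operatorname{div}\,\bm{u}=0$, the cleanest route is to extend $\bm{u}$ by zero across $\partial\Omega$ to a function $\tilde{\bm{u}}\in\bm{H}(\operatorname{div},\mathbb{R}^3)$ (the zero normal trace guarantees the extension is divergence-free globally), apply the Poincar\'e lemma on a larger ball, and then compose with a cut-off / restriction argument that preserves the vanishing tangential trace; equivalently one invokes the standard regular decomposition for $\bm{H}_0(\operatorname{div},\Omega)$ on Lipschitz contractible domains (cited as~\cite{birman1987l2,costabel1990remark} in the paper).

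The main obstacle is clearly the boundary-condition case for $D=\operatorname{div}$, since one must upgrade a raw vector potential $\bm{\psi}$ to one lying in $\bm{H}_0(\bm{curl},\Omega)$. The nonuniqueness of $\bm{\psi}$ (defined up to a gradient) is what saves the day: after producing any $\bm{\psi}$ and restricting it to $\Omega$, I would modify it by subtracting $\bm{grad}\,\chi$ for a suitable $\chi\in H^1(\Omega)$ chosen so that the corrected potential has vanishing tangential trace, reducing the correction to a lifting problem for a surface field on $\partial\Omega$ of vanishing total flux. Apart from this technicality, the proof is essentially a citation-assembly of the Poincar\'e lemma and known regular decompositions.
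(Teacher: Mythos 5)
The paper states this proposition without proof, treating it as a classical fact drawn from the cited literature (\cite{amrouche1998vector,girault2012finite,monk2003finite}), so there is no in-paper argument to compare against; your sketch is correct and is essentially the standard proof one would assemble from those references. The one genuinely delicate point, which you correctly isolate, is producing a vector potential in $\bm{H}_0(\bm{curl},\Omega)$ for a divergence-free field in $\bm{H}_0(div,\Omega)$; on a simply connected Lipschitz domain with connected boundary this is exactly the tangential vector potential theorem of Amrouche--Bernardi--Dauge--Girault (their Theorem 3.17), and the compatibility flux condition is automatic here since $\partial\Omega$ has a single connected component.
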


Before stating the regular decomposition result of space $\mathcal{H}(\mathcal{D},\Omega)$, let us make the following notation: we denote by $\bm{X}(\Omega)$ one of the two spaces $\left( H^1(\Omega)\right)^3$ or $\left( H^1_0(\Omega)\right)^3$ according to the context (see Table \eqref{table1}). The following theorem is essential, see for instance  \cite{amrouche1998vector,hiptmair2007nodal, pasciak2002overlapping,zhao2004analysis}.

\begin{theorem}[Regular decomposition of $\mathcal{H}(\mathcal{D},\Omega)$]\label{th:cont-regular-decomposition}
For each $\bm{u} \in \mathcal{H}(\mathcal{D},\Omega)$, there exist $\bm{\varphi} \in \bm{X}(\Omega)$ and $\bm{\phi} \in \mathcal{H}(\mathcal{D}^-,\Omega)$ such that
$$
\bm{u}=\bm{\varphi} + \mathcal{D}^- \bm{\phi},
$$
with estimates
\begin{equation}
\|\bm{\varphi}\|_{\bm{L}^2(\Omega)} \leq \|\bm{u}\|_{\bm{L}^2(\Omega)},
\end{equation}
and
\begin{equation}
\|\bm{\varphi}\|_{\bm{X}(\Omega)} \leq C \|D \bm{u} \|_{\bm{L}^2(\Omega)},
\end{equation}
for some positive constant $C$.
\end{theorem}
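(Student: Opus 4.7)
The plan is to proceed by case analysis on $\mathcal{D} \in \{\bm{curl}, div\}$ and on whether essential boundary conditions are imposed, but in all four cases to follow the same blueprint: first construct a ``regular'' vector potential $\bm{\varphi} \in \bm{X}(\Omega)$ whose image under $\mathcal{D}$ matches $\mathcal{D}\bm{u}$, and then appeal to the exactness of the de Rham complex recalled in the preceding proposition to write the remainder $\bm{u}-\bm{\varphi} \in \ker \mathcal{D}$ as $\mathcal{D}^- \bm{\phi}$ for some $\bm{\phi} \in \mathcal{H}(\mathcal{D}^-,\Omega)$.

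For the natural-BC curl case $\bm{u} \in \bm{H}(\bm{curl},\Omega)$, I would first extend $\bm{u}$ stably to $\tilde{\bm{u}} \in \bm{H}(\bm{curl},\mathbb{R}^3)$ by a Stein-type extension controlling the $\bm{L}^2(\mathbb{R}^3)$ norms of $\tilde{\bm{u}}$ and $\bm{curl}\,\tilde{\bm{u}}$ by their analogues on $\Omega$. Setting $\bm{w} = \bm{curl}\,\tilde{\bm{u}}$, which is square-integrable and solenoidal on $\mathbb{R}^3$, the Poincar\'e lemma (Biot--Savart construction) produces a potential $\tilde{\bm{\varphi}} \in \left(H^1(\mathbb{R}^3)\right)^3$ with $\bm{curl}\,\tilde{\bm{\varphi}} = \bm{w}$ and $\|\tilde{\bm{\varphi}}\|_{H^1(\mathbb{R}^3)} \leq C\|\bm{w}\|_{\bm{L}^2(\mathbb{R}^3)}$. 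Restricting to $\Omega$ yields $\bm{\varphi} \in \bm{X}(\Omega)$, and since $\bm{curl}(\bm{u}-\bm{\varphi}) = 0$ on $\Omega$, exactness gives $\bm{u}-\bm{\varphi} = \mathcal{D}^- \bm{\phi}$. For the essential-BC curl case I would extend $\bm{u}$ by zero (admissible because the vanishing tangential trace makes the zero-extension lie in $\bm{H}(\bm{curl},\mathbb{R}^3)$), apply the same construction, and then correct $\tilde{\bm{\varphi}}$ by a curl-free field supported near $\partial \Omega$ so that the modified $\bm{\varphi}$ lies in $\left(H^1_0(\Omega)\right)^3$ without changing $\bm{curl}(\bm{u}-\bm{\varphi})=0$. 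The $\mathcal{D} = div$ cases are symmetric: one constructs $\bm{\varphi} \in \bm{X}(\Omega)$ with $div\,\bm{\varphi} = div\,\bm{u}$ via a whole-space Poincar\'e construction in the natural case and a Bogovskii right inverse in the essential-BC case, and exactness then writes the divergence-free remainder as $\bm{curl}\,\bm{\phi}$ with $\bm{\phi} \in \mathcal{H}(div^-, \Omega)$.

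The first estimate $\|\bm{\varphi}\|_{\bm{L}^2(\Omega)} \leq \|\bm{u}\|_{\bm{L}^2(\Omega)}$ with constant one is a refinement I would secure by additionally choosing $\bm{\varphi}$ as the $\bm{L}^2(\Omega)$-orthogonal projection of $\bm{u}$ onto the orthogonal complement of $\ker\mathcal{D}$, whence the bound follows by Pythagoras; this is compatible with the above constructions because any $\ker\mathcal{D}$-component in $\bm{\varphi}$ can be absorbed into the $\mathcal{D}^-\bm{\phi}$ term. The main technical obstacle is the construction of the continuous right inverse whose $\bm{X}(\Omega)$-norm is controlled by $\|\mathcal{D}\bm{u}\|_{\bm{L}^2(\Omega)}$ alone (not by the full graph norm) and which simultaneously respects the trace conditions in the essential-BC setting. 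In the natural-BC variants this reduces to classical Poincar\'e-lemma estimates on $\mathbb{R}^3$, but the homogeneous variants require domain-specific tools -- reflection extensions through the faces of the cube $\Omega = (0,1)^d$, Fourier-series constructions on the associated torus, or Bogovskii-type operators on star-shaped subdomains -- to preserve the trace while keeping the sharp bound. These are precisely the building blocks assembled in \cite{amrouche1998vector,hiptmair2007nodal,pasciak2002overlapping,zhao2004analysis}, and the present plan would invoke them at the corresponding step in each case.
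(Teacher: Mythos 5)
The paper does not prove this theorem: it is quoted as a known result with a pointer to \cite{amrouche1998vector,hiptmair2007nodal,pasciak2002overlapping,zhao2004analysis}, so there is no in-paper argument to compare against. Your outline is essentially a reconstruction of the strategy used in those references (build a regular potential $\bm{\varphi}\in\bm{X}(\Omega)$ with $\mathcal{D}\bm{\varphi}=\mathcal{D}\bm{u}$ via extension plus a whole-space Poincar\'e/Biot--Savart or Bogovskii right inverse, then use exactness of the de Rham complex to write the $\mathcal{D}$-free remainder as $\mathcal{D}^-\bm{\phi}$), and as a plan it is sound for the natural-BC cases, provided you use an extension of de Rham type that bounds $\|\mathcal{D}\tilde{\bm{u}}\|_{\bm{L}^2(\mathbb{R}^3)}$ by $\|\mathcal{D}\bm{u}\|_{\bm{L}^2(\Omega)}$ alone rather than by the graph norm (a generic Stein extension does not do this; reflections across the faces of the cube, or the universal extension operators for differential forms, do).

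The one step I would not accept as written is the claim that the sharp bound $\|\bm{\varphi}\|_{\bm{L}^2(\Omega)}\leq\|\bm{u}\|_{\bm{L}^2(\Omega)}$ can always be secured by replacing $\bm{\varphi}$ with the $\bm{L}^2$-orthogonal projection of $\bm{u}$ onto $(\ker\mathcal{D})^{\perp}$, ``absorbing'' the difference into $\mathcal{D}^-\bm{\phi}$. For this substitution to be legitimate the projected field must itself lie in $\bm{X}(\Omega)$. In the natural-BC $\bm{curl}$ case the projection lands in $\bm{H}(\bm{curl},\Omega)\cap\bm{H}_0(div,\Omega)$ with zero divergence, which embeds into $\bm{H}^1(\Omega)^3$ only because the cube is convex --- so your argument silently uses convexity, and should say so. More seriously, in the essential-BC cases $\bm{X}(\Omega)=\left(H^1_0(\Omega)\right)^3$, and the projection onto $(\nabla H^1_0(\Omega))^{\perp}$ produces a field whose tangential trace vanishes but whose full trace does not; it is therefore in $\bm{H}^1$ but not in $\bm{H}^1_0$, so the Pythagoras trick does not deliver a decomposition with $\bm{\varphi}\in\bm{X}(\Omega)$ there. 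Obtaining the constant-one $\bm{L}^2$ bound simultaneously with the $H^1_0$ constraint requires the boundary-corrector construction of \cite{pasciak2002overlapping,zhao2004analysis}, which you defer to but which cannot be short-circuited by the projection argument. (For the use the paper actually makes of this theorem, in Lemma 3.2, a constant $C$ in place of $1$ would suffice, so this affects the sharpness of the statement rather than the downstream results.)
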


In Section \ref{sec:asp} (see Theorem \ref{prop:stable-Hitpmair-Xu-decomposition}), we shall show a discrete version of Theorem \ref{th:cont-regular-decomposition}.
 
\subsection{Auxiliary Space Preconditioning (ASP) Method}\label{subsec:Auxiliary-Space-Preconditioning-Method}
This subsection provides a brief overview of the Auxiliary Space Preconditioning (ASP) method. For a more detailed discussion, the reader is refereed to \cite{chen2015auxiliary,hiptmair2006auxiliary,hiptmair2007nodal,kolev2008auxiliary,kolev2009parallel,nepomnyaschikh1991decomposition,xu1992iterative} and the references therein.

Let $V$ be a Hilbert space with an inner product $a: V \times V \rightarrow \mathbb{R}$. The ASP method involves three main components: auxiliary spaces, transfer operators, and a smoother. The auxiliary spaces, denoted as $W_i$ for $i=1, \cdots, I$, are equipped with inner products $a_i: W_i \times W_i \rightarrow \mathbb{R}$. The transfer operators are linear operators $\pi_i: W_i \rightarrow V$ that map the auxiliary spaces to $V$. The smoother is an inner product $s: V \times V \rightarrow \mathbb{R}$ that is distinct from $a$ and is often provided by a relaxation method such as the Jacobi or symmetric Gauss-Seidel schemes.

Given these components, the ASP preconditioner is constructed as
\begin{equation*}
B = S^{-1} + \sum_{i=1}^I \pi_i \circ {A}_i^{-1} \circ \pi_i^*,
\end{equation*}
where $S$ and $A_i$ are linear operators corresponding to the inner products $s$ and $a_i$, respectively, and $\circ$ denotes composition of linear operators. The adjoint operator of $\pi_i$ is denoted as $\pi_i^*$.

Under appropriate assumptions, we prove that $B$ is a valid preconditioner for $A$. Specifically, we have the following result (see \cite[Theorem 2.2]{hiptmair2007nodal}):

\begin{theorem}\label{th:ASP-lemma}
Assume that there are some nonnegative constants $\beta_j$ and $\gamma$ such that
\begin{itemize}
\item[(i)] The continuity of $\pi_j$ with respect to the graph norms:
\begin{equation*}
a\left(\pi_j (w_j),\pi_j (w_j)\right) \leq \beta_j \sum_{i=1}^I a_i(w_j,w_j), \quad \forall w_j \in W_j.
\end{equation*}  
\item[(ii)]  The continuity of $s^{-1}$:
\begin{equation*}
a(v,v)\leq \gamma \,s(v,v), \quad \forall v \in V.
\end{equation*}

\item[(iii)] Existence of a stable decomposition of $V$: for each $v \in V$, there exist $\widetilde{v} \in V$ and $w_i \in W_i$ such that 
\begin{equation*}
v=\widetilde{v}+ \sum_{i=1}^I \pi_i w_i,
\end{equation*}
with estimate 
\begin{equation*}
s(\widetilde{v},\widetilde{v}) + \sum_{i=1}^I a_i(w_i,w_i) \leq \eta \, a(v,v),
\end{equation*}
for some nonnegative (small) constant $\eta$. 
\end{itemize}
Then we have the following estimate for the {\em spectral condition number} of the preconditioned operator
\begin{equation*}
\kappa(BA) \leq \eta \left(\gamma+ \sum_{i=1}^I \beta_i \right).
\end{equation*}
\end{theorem}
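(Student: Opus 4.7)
The plan is to exploit the additive structure of $B$ through a variational identity for $B^{-1}$, and then derive two-sided form inequalities relating $B^{-1}$ and $A$, which together produce the condition number bound. Since $B$ and $A$ are symmetric positive definite, $BA$ is self-adjoint with respect to $a$, and the bound $\kappa(BA) \leq c_1/c_0$ follows from any estimate of the form
\begin{equation*}
c_0\,\langle B^{-1}v, v\rangle \leq a(v,v) \leq c_1\,\langle B^{-1}v, v\rangle \qquad \forall v\in V,
\end{equation*}
where $\langle \cdot, \cdot \rangle$ denotes the canonical duality pairing between $V$ and its dual. So the entire task reduces to identifying admissible constants $c_0, c_1$.

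The first step is to establish the classical additive Schwarz (fictitious space) identity
\begin{equation*}
\langle B^{-1}v,v\rangle = \inf_{v = \tilde v + \sum_i \pi_i w_i} \left( s(\tilde v, \tilde v) + \sum_{i=1}^I a_i(w_i, w_i) \right),
\end{equation*}
where the infimum ranges over all admissible decompositions $(\tilde v, w_1, \ldots, w_I) \in V \times W_1 \times \cdots \times W_I$. A Lagrange-multiplier argument applied to this constrained quadratic program shows that its minimizer has the form $\tilde v = S^{-1}\lambda$, $w_i = A_i^{-1} \pi_i^* \lambda$ for a common multiplier $\lambda$; substituting back into the constraint gives $B\lambda = v$, so $\lambda = B^{-1}v$ and the attained minimum equals $\langle B^{-1}v, v\rangle$.

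With the identity in hand, the two bounds are essentially routine. For the upper bound on $\lambda_{\max}(BA)$, I would expand $a(v,v) = a(v,\tilde v) + \sum_i a(v,\pi_i w_i)$ for an arbitrary decomposition, apply Cauchy--Schwarz termwise, use hypothesis (ii) to dominate $a(\tilde v, \tilde v)^{1/2}$ by $\gamma^{1/2} s(\tilde v, \tilde v)^{1/2}$ and hypothesis (i) to dominate $a(\pi_i w_i, \pi_i w_i)^{1/2}$ by $\beta_i^{1/2} a_i(w_i, w_i)^{1/2}$, then apply a second Cauchy--Schwarz over the index set $\{0, 1, \ldots, I\}$ with weights $(\gamma, \beta_1, \ldots, \beta_I)$. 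Dividing through by $a(v,v)^{1/2}$ and taking the infimum over decompositions yields $a(v,v) \leq (\gamma + \sum_i \beta_i)\,\langle B^{-1}v,v\rangle$, so $c_1 = \gamma + \sum_i \beta_i$ is admissible. For the lower bound, hypothesis (iii) directly exhibits a decomposition for which $s(\tilde v, \tilde v) + \sum_i a_i(w_i, w_i) \leq \eta\, a(v,v)$, so the identity gives $\langle B^{-1}v, v\rangle \leq \eta\, a(v,v)$ and $c_0 = 1/\eta$ is admissible. The ratio then yields $\kappa(BA) \leq \eta(\gamma + \sum_i \beta_i)$.

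The principal technical delicacy lies in the first step: one must be careful about the duality pairings (so that the adjoints $\pi_i^*$ are unambiguous) and must verify that $B$ is symmetric positive definite (so that $B^{-1}$ is well-defined and the variational identity is meaningful). Once this is on firm footing, the remainder of the argument is just repeated Cauchy--Schwarz combined with the three listed hypotheses.
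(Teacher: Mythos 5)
Your argument is correct: establishing the additive Schwarz identity $\langle B^{-1}v,v\rangle=\inf_{v=\widetilde v+\sum_i\pi_iw_i}\bigl(s(\widetilde v,\widetilde v)+\sum_i a_i(w_i,w_i)\bigr)$ and then extracting the upper spectral bound by two applications of Cauchy--Schwarz with hypotheses (i)--(ii), and the lower bound directly from (iii), is exactly the standard proof of the auxiliary space lemma. The paper gives no proof of this statement---it is quoted from \cite[Theorem 2.2]{hiptmair2007nodal}---and your route coincides with the one used in that reference, so there is nothing to reconcile. One minor remark: hypothesis (i) as printed carries a spurious sum $\sum_{i=1}^{I}a_i(w_j,w_j)$ on the right-hand side; your reading $a(\pi_j w_j,\pi_j w_j)\le\beta_j\,a_j(w_j,w_j)$ is the intended one and is what the constant $\gamma+\sum_i\beta_i$ requires.
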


The above result highlights the central importance of stable regular decompositions in constructing an efficient auxiliary space preconditioner. In this work, we focus on the discrete case, which requires adapting the regular decomposition of Theorem \ref{th:cont-regular-decomposition} to the discrete level. As a first step, we introduce the discrete spaces in the next section.

\subsection{IsoGeometric Spaces}\label{subsec-IGA}
In this section, we introduce a discrete counterpart of the functional space $\mathcal{H}(\mathcal{D}, \Omega)$ in the context of Isogeometric Analysis \cite{bazilevs2006isogeometric,buffa2011isogeometric,cottrell2009isogeometric,da2014mathematical,hughes2005isogeometric}. We begin by recalling some basic properties of $B$-spline functions and then proceed to construct the IgA discretization of $\bm{curl}$ and $div$ operators. For an introduction to the subject, we refer the reader to standard textbooks on the topic \cite{cohen2001geometric,farin1999nurbs,farin2002curves,gu2008computational,piegl1996nurbs,prautzsch2002bezier,rogers2001introduction,schumaker2007spline}.

Let $T=(t_1,t_2,\ldots,t_{m})$ be a knot vector, which is a non-decreasing sequence of real numbers. The $i$-th $B$-spline of order $p \in \mathbb{N}$ is defined recursively using the {\em Cox-de Boor formula} as follows:
\begin{equation*}
B_{i,0}(t) = 
\begin{cases}
1 \quad &\text{if } t_i \leq t < t_{i+1},\\
0 \quad &\text{otherwise}
\end{cases}
\end{equation*}
\begin{equation*}
B_{i,p}(t)=\frac{t - t_i}{t_{i+p}-t_i} B_{i,p-1}(t) + \frac{t_{i+p+1} - t}{t_{i+p+1}-t_{i+1}} B_{i+1,p-1}(t), 
\end{equation*}
for $i=1, \ldots, n=m-p-1$, where a fraction with zero denominator is assumed to be zero. Following \cite{buffa2011isogeometric}, we introduce also the vector $U=(u_1,\ldots,u_N)$ of breakpoints where $N$ is the number of knots without repetition and the regularity vector $\bm{\alpha}=(\alpha_1, \ldots, \alpha_N) \in \mathbb{N}^N$ in such a way that for each $i \in \{ 1,\ldots,N\}$, the $B$-spline function $B_{i,p}$ is $\alpha_i$ continuously derivable at the breakpoint $u_i$. Note that $\alpha_i=p-r_i$ where $r_i$ is the multiplicity of the break point $u_i$. Throughout the paper, we will only consider  {\em non-periodic} knot vectors
\begin{equation*}
T=(\underbrace{0,\ldots,0}_{p+1}, t_{p+2}, \ldots, t_{m-p-1}, \underbrace{1,\ldots,1}_{p+1}),
\end{equation*}  
{and we suppose that $1 \leq r_i \leq p$. In this way we guarantee that $0 \leq \alpha_i \leq p$ where the minimal regularity $\alpha_i=0$ corresponds to a continuity at knot $u_i$.} This allow us to introduce  the uni-variate spline spaces
\begin{equation*}
\mathcal{S}^p_{\bm{\alpha}}= span\left\{ B_{i,p} \,:\, i=1,\ldots,n\right\}, \quad \mathcal{S}^p_{\bm{\alpha},0}= span\left\{ B_{i,p} \,:\, i=2,\ldots,n-1\right\}.
\end{equation*} 
Note that all the elements of space $\mathcal{S}^p_{\bm{\alpha},0}$ vanish at the boundary of  $(0,1)$ (by definition). Hence, the space is suited for dealing with homogeneous Dirichlet boundary conditions.

These definitions can be generalized to the multivariate case $\Omega=(0,1)^3$ by {\em tensorization}: With a tridirectional knot vector $\bm{T}=T_1 \times T_2 \times T_3$ at hand, where
\begin{equation*}
T_i=(\underbrace{0,\ldots,0}_{p_i+1}, t_{i,p_i+2}, \ldots, t_{i,m_i-p_i-1}, \underbrace{1,\ldots,1}_{p_i+1}),\quad m_i, p_i \in \mathbb{N}, \; i=1,2,3,
\end{equation*} 
is an open univariate knot vector, we define the {\em tensor-product spline} space by
\begin{equation*}
\bm{\mathcal{S}}^{p_1,p_2,p_3}_{\bm{\alpha}_1,\bm{\alpha}_2,\bm{\alpha}_3}= \mathcal{S}^{p_1}_{\bm{\alpha}_1} \otimes \mathcal{S}^{p_2}_{\bm{\alpha}_2} \otimes \mathcal{S}^{p_3}_{\bm{\alpha}_3},
\end{equation*} 
where $\bm{\alpha}_i$ is the regularity vector related to knot $T_i$, with $i=1,2,3$. However, we shall also assume our mesh to be {\em locally quasi-uniform}, meaning, there exists a constant $\theta \geq 1$ such that for all $i\in \{1,2,3\}$ we have 
\begin{equation*}
\frac{1}{\theta} \leq \frac{h_{i,j_i}}{h_{i,j_i+1}} \leq \theta, \quad j_i=1, \ldots, N_i-2,
\end{equation*}
where $N_i$ is the number of $T_i$-knots without repetition and $h_{i,j_i}=t_{i,j_i+1}-t_{i,k_{j_i}}$, with $k_{j_i}=\max \{l\,:\,t_l < t_{i,j_i+1}\}$.

With these notations, the $3$-$d$ approximations spaces are given  by (see, e.g. \cite{buffa2011isogeometric,da2014mathematical})
$$
\left\{\begin{array}{l}
V_{h}(\textbf{grad},\Omega) =\bm{\mathcal{S}}^{p_1,p_2,p_3}_{\bm{\alpha}_1,\bm{\alpha_2},\bm{\alpha_3}}, \vspace{0.25cm}\\
\bm{V}_{h}(\bm{curl},\Omega) = \bm{\mathcal{S}}^{p_1-1,p_2,p_3}_{\bm{\alpha}_1-1,\bm{\alpha_2},\bm{\alpha_3}} \times \bm{\mathcal{S}}^{p_1,p_2-1,p_3}_{\bm{\alpha}_1,\bm{\alpha_2}-1,\bm{\alpha_3}} \times \bm{\mathcal{S}}^{p_1,p_2,p_3-1}_{\bm{\alpha}_1,\bm{\alpha_2},\bm{\alpha_3}-1}, \vspace{0.25cm}\\
\bm{V}_{h}(div,\Omega) = \bm{\mathcal{S}}^{p_1,p_2-1,p_3-1}_{\bm{\alpha}_1,\bm{\alpha_2}-1,\bm{\alpha_3}-1} \times \bm{\mathcal{S}}^{p_1-1,p_2,p_3-1}_{\bm{\alpha}_1-1,\bm{\alpha_2},\bm{\alpha_3}-1} \times \bm{\mathcal{S}}^{p_1-1,p_2-1,p_3}_{\bm{\alpha}_1-1,\bm{\alpha_2}-1,\bm{\alpha_3}}, \vspace{0.25cm}\\
V_{h}(L^2,\Omega) = \bm{\mathcal{S}}^{p_1-1,p_2-1,p_3-1}_{\bm{\alpha}_1-1,\bm{\alpha_2}-1,\bm{\alpha_3}-1},
\end{array}  \right.
$$
where $h$ refers to the global mesh size, i.e $h=\max_{\substack{1 \leq j_i \leq N_i-1 \\ i=1,2,3}}h_{i,j_i}$. Let 
$$
\left\{\begin{array}{l}
V_{h,0}(\textbf{grad},\Omega) = V_{h}(\textbf{grad},\Omega) \cap H^1_0(\Omega) \vspace{0.25cm}\\
\bm{V}_{h,0}(\bm{curl},\Omega) = \bm{V}_{h}(\bm{curl},\Omega) \cap \bm{H}_0(\bm{curl}, \, \Omega), \vspace{0.25cm}\\
\bm{V}_{h,0}(div,\Omega) = \bm{V}_{h}(div,\Omega) \cap \bm{H}_0(div, \, \Omega), \vspace{0.25cm}\\
V_{h,0}(L^2, \Omega) = V_h(L^2, \Omega) \cap L_0^2(\Omega),
\end{array}  \right.
$$
for spaces with essential boundary conditions.

\begin{remark}
Since we work on the parametric domain $(0, 1)^3$, we have 
$$
\left\{\begin{array}{l}
V_{h, 0}(\textbf{grad},\Omega) =\mathcal{S}^{p_1}_{\bm{\alpha}_1, 0} \otimes \mathcal{S}^{p_2}_{\bm{\alpha}_2, 0} \otimes \mathcal{S}^{p_3}_{\bm{\alpha}_3, 0}, \vspace{0.25cm}\\
\bm{V}_{h, 0}(\bm{curl},\Omega)= 
\begin{pmatrix}
\mathcal{S}^{p_1-1}_{\bm{\alpha}_1-1} \otimes \mathcal{S}^{p_2}_{\bm{\alpha}_2, 0} \otimes \mathcal{S}^{p_3}_{\bm{\alpha}_3, 0}\\
 \mathcal{S}^{p_1}_{\bm{\alpha}_1, 0} \otimes \mathcal{S}^{p_2-1}_{\bm{\alpha}_2-1} \otimes \mathcal{S}^{p_3}_{\bm{\alpha}_3, 0}\\
\mathcal{S}^{p_1}_{\bm{\alpha}_1, 0} \otimes \mathcal{S}^{p_2}_{\bm{\alpha}_2, 0} \otimes \mathcal{S}^{p_3-1}_{\bm{\alpha}_3-1}
\end{pmatrix} \vspace{0.25cm}\\
\bm{V}_{h, 0}(div,\Omega) = 
\begin{pmatrix}
\mathcal{S}^{p_1}_{\bm{\alpha}_1,0} \otimes \mathcal{S}^{p_2-1}_{\bm{\alpha}_2-1} \otimes \mathcal{S}^{p_3-1}_{\bm{\alpha}_3-1}\\
\mathcal{S}^{p_1-1}_{\bm{\alpha}_1-1} \otimes \mathcal{S}^{p_2}_{\bm{\alpha}_2,0} \otimes \mathcal{S}^{p_3-1}_{\bm{\alpha}_3-1}\\
\mathcal{S}^{p_1-1}_{\bm{\alpha}_1-1} \otimes \mathcal{S}^{p_2-1}_{\bm{\alpha}_2-1} \otimes \mathcal{S}^{p_3}_{\bm{\alpha}_3,0}
\end{pmatrix}.
\end{array}  \right.
$$

\end{remark}

Now {\em de Rham diagrams} can be constructed. Among the important properties, one can build specific projectors, what is called {\em quasi interpolation operators}, that make these diagrams commute. We shall start with the univariate case, then extend it by tensor product. For this purpose, we take any locally stable projector $\mathcal{P}_{h} \,:\, H^1(0,1) \longrightarrow \mathcal{S}^p_{\bm{\alpha}}$, for instance see \cite{buffa2011isogeometric,schumaker2007spline} for theoretical studies, then we define the corresponding 
histopolation operator by 
\begin{equation*}
\mathcal{Q}_{h} \phi = \frac{d}{d x} \mathcal{P}_{h} \left( \int_0^x \phi(t) dt \right), \quad \phi \in L^2(0,1).
\end{equation*} 
Following the notations above, the quasi interpolation operators are given by  
$$
\left\{\begin{array}{c}
\Pi^{\textbf{grad}}_h=\mathcal{P}_h \otimes \mathcal{P}_h \otimes \mathcal{P}_h, \vspace{0.25cm}\\
\Pi^{\bm{curl}}_h=
\begin{pmatrix}
\mathcal{Q}_h \otimes \mathcal{P}_h \otimes \mathcal{P}_h\\
\mathcal{P}_h \otimes \mathcal{Q}_h \otimes \mathcal{P}_h\\
\mathcal{P}_h \otimes \mathcal{P}_h \otimes \mathcal{Q}_h
\end{pmatrix},\vspace{0.25cm}\\
\Pi^{div}_h=
\begin{pmatrix}
\mathcal{P}_h \otimes \mathcal{Q}_h \otimes \mathcal{Q}_h\\
\mathcal{Q}_h \otimes \mathcal{P}_h \otimes \mathcal{Q}_h\\
\mathcal{Q}_h \otimes \mathcal{Q}_h \otimes \mathcal{P}_h,
\end{pmatrix}.\vspace{0.25cm}\\
\Pi^{L^2}_h=\mathcal{Q}_h \otimes \mathcal{Q}_h \otimes \mathcal{Q}_h.
\end{array}  \right.
$$
(here the notation $\otimes$ express the composition of operators on each coordinate). 

The case with boundary conditions follows the same rationals. In fact, in this case one simply replace $\mathcal{P}_{h}$ by a  locally stable projector preserving boundary conditions $\mathcal{P}_{h,0} \,:\, H^1_0(0,1) \longrightarrow \mathcal{S}^p_{\bm{\alpha},0}$ (see \cite{buffa2011isogeometric}) and modifies  the projector $\mathcal{Q}_{h}$ as follows
\begin{equation*}
\mathcal{Q}_{h,0} \phi = \frac{d}{d x} \mathcal{P}_{h,0} \left( \int_0^x \phi(t) dt \right), \quad \phi \in L^2_0(0,1). 
\end{equation*}  
Let then

$$
\left\{\begin{array}{c}
\Pi^{\textbf{grad}}_{h, 0}=\mathcal{P}_{h, 0} \otimes \mathcal{P}_{h, 0} \otimes \mathcal{P}_{h, 0}, \vspace{0.25cm}\\
\Pi^{\bm{curl}}_{h, 0}=
\begin{pmatrix}
\mathcal{Q}_{h, 0} \otimes \mathcal{P}_{h, 0} \otimes \mathcal{P}_{h, 0}\\
\mathcal{P}_{h, 0} \otimes \mathcal{Q}_{h, 0} \otimes \mathcal{P}_{h, 0}\\
\mathcal{P}_{h, 0} \otimes \mathcal{P}_{h, 0} \otimes \mathcal{Q}_{h, 0}
\end{pmatrix},\vspace{0.25cm}\\
\Pi^{div}_{h, 0}=
\begin{pmatrix}
\mathcal{P}_{h, 0} \otimes \mathcal{Q}_{h, 0} \otimes \mathcal{Q}_{h, 0}\\
\mathcal{Q}_{h, 0} \otimes \mathcal{P}_{h, 0} \otimes \mathcal{Q}_{h, 0}\\
\mathcal{Q}_{h, 0} \otimes \mathcal{Q}_{h, 0} \otimes \mathcal{P}_{h, 0},
\end{pmatrix}.\vspace{0.25cm}\\
\Pi^{L^2}_{h, 0}=\mathcal{Q}_{h, 0} \otimes \mathcal{Q}_{h, 0} \otimes \mathcal{Q}_{h, 0}.
\end{array}  \right.
$$

Next, we provide some approximation error results, for this purpose, it is more suitable to use an unified  presentation. Thus, as in subsection  \ref{subsec:functional-spaces} we write $D$ for either $\bm{curl}$ or $div$, and in the case with essential boundary conditions we will drop the index $0$ (see Table \eqref{table2}). We have then (see \cite[Proposition 4.5]{buffa2011isogeometric})

\begin{table}[H]
\centering
\begin{tabular}{l|l|l|l}
$V_h(\mathcal{D}, \Omega)$ &  $V_h(\mathcal{D}^-, \Omega)$ & $V_h(\mathcal{D}^+, \Omega)$ & $\bm{X}_h(\Omega)$ \\ \hline
$\bm{V}_h(\bm{curl}, \Omega)$ &  $V_h(\bm{grad}, \Omega)$ & $\bm{V}_h(\bm{curl}, \Omega)$ & $\left( V_h(\bm{grad}, \Omega) \right)^3$\\ 
$\bm{V}_{h,0}(\bm{curl}, \Omega)$ & $V_{h,0}(\bm{grad}, \Omega)$ &  $\bm{V}_{h,0}(\bm{curl}, \Omega)$ & $\left( V_{h,0}(\bm{grad}, \Omega) \right)^3$\\
$\bm{V}_h(div, \Omega)$  &  $\bm{V}_h(\bm{curl}, \Omega)$ & $V_{h}(L^2, \Omega)$ & $\left( V_h(\bm{grad}, \Omega) \right)^3$ \\
$\bm{V}_{h,0}(div, \Omega)$ &  $\bm{V}_{h,0}(\bm{curl}, \Omega)$ & $V_{h,0}(L^2, \Omega)$ & $\left( V_{h,0}(\bm{grad}, \Omega) \right)^3$
\end{tabular}
\caption{Spaces $V_h(\mathcal{D}, \Omega)$, $V_h(\mathcal{D}^-, \Omega)$, $V_h(\mathcal{D}^+, \Omega)$ and $\bm{X}_h(\Omega)$}
\label{table2}
\end{table}

\begin{proposition}
The diagram shown below is exact and commutes:
\begin{align}\label{pr-eq:de Rham-diagram}
    \begin{array}{ccccc}
   \mathcal{H}(\mathcal{D}^-, \Omega) & \xrightarrow{\quad \mathcal{D}^- \quad } & \mathcal{H}(\mathcal{D}, \Omega) & \xrightarrow{\quad \mathcal{D} \quad } & \mathcal{H}(\mathcal{D}^+, \Omega)\\
  \Pi_{h}^{\mathcal{D}^-} \Bigg\downarrow &  & \Pi_{h}^{\mathcal{D}} \Bigg\downarrow &  & \Pi_{h}^{\mathcal{D}^+} \Bigg\downarrow \\
  V_{h}(\mathcal{D}^-,\Omega) & \xrightarrow{\quad \mathcal{D}^- \quad } & V_{h}(\mathcal{D},\Omega) & \xrightarrow{\quad \mathcal{D} \quad } & V_{h}(\mathcal{D}^+,\Omega)
  \end{array}
\end{align}

Finally, we shall need the following approximation result (see \cite[Theorem 5.3]{buffa2011isogeometric})
\begin{theorem}\label{error_estimates}
Suppose $l$ and $r$ are integers satisfying $0 \leq l \leq r \leq \underline{p}$ and $l \leq \underline{\alpha}$, where $\underline{p}$ is the minimum of $p_1$, $p_2$, and $p_3$, and $\underline{\alpha}$ is the minimum of $\bm{\alpha}_1$, $\bm{\alpha}_2$, and $\bm{\alpha}_3$. Then, the following inequalities hold true
\begin{align*}
& \left \|\varphi - \Pi_{h}^{\mathcal{D}}\varphi \right \|_{H^l(\Omega)} \leq C h^{r-l}\|\varphi\|_{H^s(\Omega)}, \quad & &\forall \varphi \in H^r(\Omega),\\
& \left \|\bm{u} - \Pi_{h}^{\bm{grad}}\bm{u} \right \|_{\bm{H}^l(\Omega)} \leq C h^{r-l}\|\bm{u}\|_{\bm{H}^s(\Omega)}, & &\forall \bm{u} \in \bm{H}^r(\Omega),\\
& \left \|\varphi - \Pi_{h,0}^{\mathcal{D}}\varphi \right \|_{H^l(\Omega)} \leq C h^{r-l}\|\varphi\|_{H^s(\Omega)}, & &\forall \varphi \in \mathcal{H}_0(\mathcal{D}, \Omega) \cap H^r(\Omega),\\
& \left \|\bm{u} - \Pi_{h,0}^{\bm{grad}}\bm{u} \right \|_{\bm{H}^l(\Omega)} \leq C h^{r-l}\|\bm{u}\|_{\bm{H}^s(\Omega)}, & &\forall \bm{u} \in \bm{H}^1_0(\Omega) \cap \bm{H}^r(\Omega).
\end{align*}
Here, $C$ is a positive constant that does not depend on $h$.
\end{theorem}

\end{proposition}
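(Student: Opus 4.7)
The plan is to separately establish commutativity of each square and then exactness of the bottom row, reducing both claims to univariate facts about the scalar projectors $\mathcal{P}_h$ and $\mathcal{Q}_h$ and propagating them through the tensor-product construction.

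First I would verify the fundamental univariate commuting relation
\begin{equation*}
\mathcal{Q}_h \circ \frac{d}{dx} = \frac{d}{dx} \circ \mathcal{P}_h \quad \text{on } H^1(0,1).
\end{equation*}
This is almost built into the definition of $\mathcal{Q}_h$: for $\psi \in H^1(0,1)$, setting $\phi = \psi'$ yields $\int_0^x \phi(t)\,dt = \psi(x) - \psi(0)$, and since any locally stable spline projector $\mathcal{P}_h$ reproduces constants, $\mathcal{P}_h(\psi - \psi(0)) = \mathcal{P}_h \psi - \psi(0)$; differentiating both sides gives the identity.

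Next I would propagate this to three dimensions via the tensor-product definitions of $\Pi_h^{\bm{grad}}$, $\Pi_h^{\bm{curl}}$, $\Pi_h^{div}$, and $\Pi_h^{L^2}$. The key observation is that $\partial / \partial x_k$ commutes with any tensor factor acting only on the variables $x_j$ with $j \neq k$. Applying the univariate identity in the $k$-th slot of the tensor product—replacing $\frac{d}{dx_k}\mathcal{P}_h$ by $\mathcal{Q}_h \frac{d}{dx_k}$, or vice versa—produces precisely the commutativity of each square: $\Pi_h^{\bm{curl}} \circ \bm{grad} = \bm{grad} \circ \Pi_h^{\bm{grad}}$, $\Pi_h^{div} \circ \bm{curl} = \bm{curl} \circ \Pi_h^{\bm{curl}}$, and $\Pi_h^{L^2} \circ div = div \circ \Pi_h^{div}$. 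One must track the component-wise action for the vector operators, but each component reduces to a single univariate application of the identity above.

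For exactness of the bottom row, the inclusion $\mathcal{D}^- V_h(\mathcal{D}^-, \Omega) \subseteq \ker \mathcal{D}|_{V_h(\mathcal{D}, \Omega)}$ is immediate from the continuous identity $\mathcal{D} \circ \mathcal{D}^- = 0$. For the reverse inclusion, given $v_h \in V_h(\mathcal{D}, \Omega)$ with $\mathcal{D} v_h = 0$, the exactness of the continuous de Rham complex (proved just above in the text) yields $\phi \in \mathcal{H}(\mathcal{D}^-, \Omega)$ with $\mathcal{D}^- \phi = v_h$. Since the projector is idempotent on the discrete space, $\Pi_h^{\mathcal{D}} v_h = v_h$, and combining with the already-established commuting square gives
\begin{equation*}
v_h = \Pi_h^{\mathcal{D}} v_h = \Pi_h^{\mathcal{D}} \mathcal{D}^- \phi = \mathcal{D}^- \Pi_h^{\mathcal{D}^-} \phi \in \mathcal{D}^- V_h(\mathcal{D}^-, \Omega),
\end{equation*}
completing the proof of exactness.

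The main technical obstacle will be justifying that the chosen univariate projectors $\mathcal{P}_h$ reproduce constants and are idempotent on $\mathcal{S}^p_{\bm{\alpha}}$—standard but indispensable properties of locally stable quasi-interpolants, which the paper inherits from \cite{buffa2011isogeometric,schumaker2007spline}. A secondary subtlety is the essential-boundary-condition case: the same argument applies after replacing $(\mathcal{P}_h, \mathcal{Q}_h)$ by $(\mathcal{P}_{h,0}, \mathcal{Q}_{h,0})$, noting that for $\phi \in L^2_0(0,1)$ the antiderivative $\int_0^x \phi(t)\,dt$ vanishes at both endpoints and hence lies in $H^1_0(0,1)$, so that $\mathcal{Q}_{h,0}$ is well-defined and the univariate commuting relation carries over verbatim.
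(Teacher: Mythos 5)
Your argument is correct in outline, but there is nothing in the paper to compare it against line by line: the paper does not prove this proposition at all, it simply quotes the commuting-diagram and exactness properties from \cite[Proposition 4.5]{buffa2011isogeometric} and the error bounds from \cite[Theorem 5.3]{buffa2011isogeometric}. What you have reconstructed is essentially the standard argument underlying that citation: the univariate identity $\mathcal{Q}_h\circ\frac{d}{dx}=\frac{d}{dx}\circ\mathcal{P}_h$ (valid because $\mathcal{P}_h$ is a projector onto $\mathcal{S}^p_{\bm{\alpha}}$ and hence reproduces constants via the B-spline partition of unity), slot-wise propagation through the tensor-product projectors, and discrete exactness obtained by combining continuous exactness, idempotency of $\Pi_h^{\mathcal{D}}$ on $V_h(\mathcal{D},\Omega)$, and the commuting square; your observation that the boundary-condition case needs no constant reproduction because $\psi(0)=0$, and that $\int_0^x\phi$ lies in $H^1_0(0,1)$ when $\phi\in L^2_0(0,1)$, is also the right way to handle $(\mathcal{P}_{h,0},\mathcal{Q}_{h,0})$, where constants are \emph{not} in the range. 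The one point you should not gloss over is the domain of the quasi-interpolants: as defined, $\Pi_h^{\mathcal{D}^-}$ need not make sense on every element of $\mathcal{H}(\mathcal{D}^-,\Omega)$ (the tensorized functionals require some extra, anisotropic regularity), so in the exactness step you must either select a sufficiently regular potential $\phi$ for the curl-free (resp.\ divergence-free) spline field $v_h$, or justify well-definedness in the spirit of Lemma \ref{thm:sec3-lemma1}(i); the paper itself silently inherits this caveat from \cite{buffa2011isogeometric}, where the commutativity is stated on smoother subspaces. Alternatively, discrete exactness can be obtained purely algebraically from the univariate spline complex by tensorization and dimension counting, avoiding the projectors altogether.
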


\section{Auxiliary Space Preconditioners}\label{sec:asp}
The aim of this section is to develop a suitable auxiliary space preconditioner for the $\bm{curl}-\bm{curl}$ and $\bm{grad}-div$ problems. As mentioned earlier, the main challenge is to drive a discrete version of the regular decomposition presented in Theorem \ref{th:cont-regular-decomposition}; known as the Hitmair-Xu decomposition. The section is divided into two subsections. In Subsection \ref{subsec:asp}, we focus on the discrete Hitmair-Xu decomposition. The outcome of this subsection is later employed in Subsection \ref{sec:construction-of-ASP} to construct the ASP preconditioner.

Throughout this section, we use the notation $A \lesssim B$ to indicate the existence of a constant $C > 0$, independent of $h$ and $\tau$, such that $A \leq CB$. If $A \lesssim B$ and $B \lesssim A$, we write $A \approx B$.
 
\subsection{Discrete Decompositions}\label{subsec:asp}

We need the following preliminary results in order to prove Hitpmair-Xu decomposition stated in Proposition \ref{prop:Hitpmair-Xu decomposition}.

\begin{lemma}\label{thm:sec3-lemma1}
For every $\bm{\varphi} \in \bm{X}(\Omega)$ such that $\mathcal{D} \bm{\varphi} \in V_{h}(\mathcal{D}^+,\Omega)$, we have
\begin{itemize}
\item[(i)] $\Pi_{h}^{\mathcal{D}} \bm{\varphi}$ is well-defined.

\item[(ii)] $\mathcal{D} \bm{\varphi} = \mathcal{D} \left( \Pi_{h}^{\mathcal{D}} \bm{\varphi}\right)$.

\item[(iii)] $\left\| \bm{\varphi}-\Pi_{h}^{\mathcal{D}} \bm{\varphi} \right\|_{\bm{L}^2(\Omega)} \lesssim h \|\bm{\varphi}\|_{\bm{X}(\Omega)}$.
\end{itemize}
\end{lemma}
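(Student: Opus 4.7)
The plan is to combine three standard ingredients already recalled in Section~\ref{preliminaries}: the tensor--product structure of the quasi--interpolation operators, the commutativity of the de Rham diagram, and the Bramble--Hilbert--type bound of Theorem~\ref{error_estimates}. Each item of the lemma then reduces to a short verification, with no genuinely hard step.

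For (i), I would note that $\Pi_h^{\mathcal{D}}$ is built componentwise by tensorising $\mathcal{P}_h$ (defined on $H^1(0,1)$) and $\mathcal{Q}_h$ (defined on $L^2(0,1)$). Since $\bm{\varphi} \in \bm{X}(\Omega) \subset \bm{H}^1(\Omega)$, each of its components carries the anisotropic regularity demanded by every tensor factor appearing in $\Pi_h^{\mathcal{D}}$ (a Fubini--type argument reducing the question to univariate traces in $H^1$ or in $L^2$), so $\Pi_h^{\mathcal{D}} \bm{\varphi}$ is unambiguously defined.

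For (ii), I would apply the commuting de Rham diagram \eqref{pr-eq:de Rham-diagram} to obtain
$$
\mathcal{D}\left(\Pi_h^{\mathcal{D}} \bm{\varphi}\right) \;=\; \Pi_h^{\mathcal{D}^+}\left(\mathcal{D}\bm{\varphi}\right).
$$
By hypothesis $\mathcal{D}\bm{\varphi} \in V_h(\mathcal{D}^+, \Omega)$, and since the univariate operators $\mathcal{P}_h$ and $\mathcal{Q}_h$ reproduce their respective spline spaces, their tensor product $\Pi_h^{\mathcal{D}^+}$ restricts to the identity on $V_h(\mathcal{D}^+, \Omega)$. The right-hand side therefore collapses to $\mathcal{D}\bm{\varphi}$, giving (ii). For (iii), I would invoke Theorem~\ref{error_estimates} with $l=0$ and $r=1$ (admissible as soon as $\underline{p} \geq 1$), applied componentwise to $\bm{\varphi} \in \bm{X}(\Omega)$, to get
$$
\left\|\bm{\varphi}-\Pi_h^{\mathcal{D}}\bm{\varphi}\right\|_{\bm{L}^2(\Omega)} \;\lesssim\; h\,\|\bm{\varphi}\|_{\bm{H}^1(\Omega)} \;\approx\; h\,\|\bm{\varphi}\|_{\bm{X}(\Omega)},
$$
the last equivalence expressing the fact that the $\bm{X}(\Omega)$-norm is equivalent to the $\bm{H}^1(\Omega)$-norm on $\bm{X}(\Omega)$.

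The only delicate point is the projection identity used in step (ii), namely that $\Pi_h^{\mathcal{D}^+}$ acts as the identity on $V_h(\mathcal{D}^+, \Omega)$; this is the spline--reproduction property of the tensor quasi--interpolants lifted from the univariate case, and it is documented in \cite{buffa2011isogeometric}. Beyond this verification, the whole argument is a direct application of the framework assembled in Section~\ref{preliminaries}, and I do not anticipate any further obstacle.
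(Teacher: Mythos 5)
Your proposal is correct and follows essentially the same route as the paper: well-definedness from the regularity of $\bm{\varphi}$, commutativity of the de Rham diagram combined with the projection property of $\Pi_h^{\mathcal{D}^+}$ on $V_h(\mathcal{D}^+,\Omega)$ for (ii), and Theorem~\ref{error_estimates} with $l=0$, $r=1$ for (iii). The only difference is that you spell out the tensor-product regularity argument for (i) and the spline-reproduction step in (ii), both of which the paper leaves implicit.
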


\begin{proof}
First insertion is a consequence of the fact that $\bm{X}(\Omega) \subset \mathcal{H}(\mathcal{D},\Omega)$. Concerning (ii), using the commutativity of Diagram \eqref{pr-eq:de Rham-diagram}, we obtain
$$
\mathcal{D}\left( \Pi_{h}^{\mathcal{D}} \bm{\varphi}\right)= \Pi_{h}^{\mathcal{D}^+} \left( \mathcal{D} \bm{\varphi} \right).
$$
We now use $\mathcal{D} \bm{\varphi}\in \bm{V}_{h}(\mathcal{D}^+,\Omega)$ to obtain (ii). Estimate (iii) follows from Theorem \ref{error_estimates}.
\end{proof}

\begin{lemma}\label{lem-semi-discrete-decomposition}
For each $\bm{u}_h \in \bm{V}_{h}(\mathcal{D},\Omega)$, there exist $\bm{\varphi} \in \bm{X}(\Omega)$ and $\bm{\phi}_h \in V_{h}(\mathcal{D}^-,\Omega)$ such that
\begin{equation}\label{eq:semi-discrete-decomposition}
\bm{u}_h = \Pi_{h}^{\mathcal{D}} \bm{\varphi} + \mathcal{D}^- \bm{\phi}_h,
\end{equation}
with estimates
\begin{equation}\label{eq:discrete-H-curl-estimate}
\|\bm{\varphi}\|_{\bm{X}(\Omega)} \lesssim \|\mathcal{D}  \bm{u}_h\|_{\bm{L}^2(\Omega)}, \quad \|\bm{\varphi}\|_{\bm{L}^2(\Omega)} \leq \|  \bm{u}_h\|_{\bm{L}^2(\Omega)},
\end{equation}
\begin{equation}\label{eq:stability-of-semi-discrete-decomposition}
\text{and} \quad \|\Pi_{h}^{\mathcal{D}} \bm{\varphi}\|_{\bm{L}^2(\Omega)} + \|\mathcal{D}^- \bm{\phi}_h \|_{\bm{L}^2(\Omega)}\lesssim \|\bm{u}_h\|_{\bm{L}^2(\Omega)}.
\end{equation}
\end{lemma}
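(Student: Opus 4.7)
The plan is to lift the continuous regular decomposition of Theorem \ref{th:cont-regular-decomposition} to the discrete side by post-composing with the spline projectors. Viewing $\bm{u}_h \in V_h(\mathcal{D},\Omega)$ as an element of $\mathcal{H}(\mathcal{D},\Omega)$, Theorem \ref{th:cont-regular-decomposition} produces $\bm{\varphi} \in \bm{X}(\Omega)$ and $\bm{\phi} \in \mathcal{H}(\mathcal{D}^-,\Omega)$ with $\bm{u}_h = \bm{\varphi} + \mathcal{D}^- \bm{\phi}$, and the two bounds of \eqref{eq:discrete-H-curl-estimate} for $\bm{\varphi}$ are already built into that theorem. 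I then apply $\Pi_h^{\mathcal{D}}$ to both sides and set $\bm{\phi}_h := \Pi_h^{\mathcal{D}^-}\bm{\phi} \in V_h(\mathcal{D}^-,\Omega)$. The left-hand side is reproduced, $\Pi_h^{\mathcal{D}}\bm{u}_h = \bm{u}_h$, because $\bm{u}_h$ already lies in the discrete space, and commutativity of the de Rham diagram \eqref{pr-eq:de Rham-diagram} gives $\Pi_h^{\mathcal{D}}(\mathcal{D}^-\bm{\phi}) = \mathcal{D}^- \Pi_h^{\mathcal{D}^-}\bm{\phi} = \mathcal{D}^- \bm{\phi}_h$. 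To make sense of $\Pi_h^{\mathcal{D}}\bm{\varphi}$ I invoke Lemma \ref{thm:sec3-lemma1}(i), whose hypothesis is that $\mathcal{D}\bm{\varphi} \in V_h(\mathcal{D}^+,\Omega)$: since $\mathcal{D}\mathcal{D}^- = 0$ we have $\mathcal{D}\bm{\varphi} = \mathcal{D}\bm{u}_h$, which lies in $V_h(\mathcal{D}^+,\Omega)$ again by \eqref{pr-eq:de Rham-diagram}. Combining these three observations yields \eqref{eq:semi-discrete-decomposition}.

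For the stability estimate \eqref{eq:stability-of-semi-discrete-decomposition} I use the triangle inequality
$$\|\Pi_h^{\mathcal{D}}\bm{\varphi}\|_{\bm{L}^2(\Omega)} \leq \|\bm{\varphi}\|_{\bm{L}^2(\Omega)} + \|\bm{\varphi} - \Pi_h^{\mathcal{D}}\bm{\varphi}\|_{\bm{L}^2(\Omega)}.$$
The first term is dominated by $\|\bm{u}_h\|_{\bm{L}^2(\Omega)}$ through the second bound of \eqref{eq:discrete-H-curl-estimate}. For the second, Lemma \ref{thm:sec3-lemma1}(iii) and the first bound of \eqref{eq:discrete-H-curl-estimate} give $\lesssim h \|\bm{\varphi}\|_{\bm{X}(\Omega)} \lesssim h\|\mathcal{D}\bm{u}_h\|_{\bm{L}^2(\Omega)}$, and the standard spline inverse inequality $\|\mathcal{D}\bm{u}_h\|_{\bm{L}^2(\Omega)} \lesssim h^{-1}\|\bm{u}_h\|_{\bm{L}^2(\Omega)}$ (available thanks to the local quasi-uniformity assumption) absorbs the extra $h$. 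The bound on $\|\mathcal{D}^-\bm{\phi}_h\|_{\bm{L}^2(\Omega)} = \|\bm{u}_h - \Pi_h^{\mathcal{D}}\bm{\varphi}\|_{\bm{L}^2(\Omega)}$ follows from one more triangle inequality together with the bound just proved.

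The main obstacle I anticipate is precisely the verification that Lemma \ref{thm:sec3-lemma1} is applicable, because this is where the complex property $\mathcal{D}\mathcal{D}^- = 0$ must interact with the discrete membership $\bm{u}_h \in V_h(\mathcal{D},\Omega)$ to force $\mathcal{D}\bm{\varphi}$ into the discrete range; once that hurdle is cleared, everything else reduces to triangle inequalities, commutativity, and the inverse estimate. I note that the inverse estimate sacrifices robustness in the spline degree $p$, which is consistent with the $p$-dependence that is cured later by the GLT refinement mentioned in the abstract.
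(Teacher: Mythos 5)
Your argument is correct and yields the same estimates as the paper, but the existence step is organized differently. The paper applies Theorem~\ref{th:cont-regular-decomposition} only to extract $\bm{\varphi}\in\bm{X}(\Omega)$ with $\mathcal{D}\bm{\varphi}=\mathcal{D}\bm{u}_h\in V_h(\mathcal{D}^+,\Omega)$, then observes via Lemma~\ref{thm:sec3-lemma1}(ii) that $\bm{u}_h-\Pi_h^{\mathcal{D}}\bm{\varphi}$ is $\mathcal{D}$-free in $\bm{V}_h(\mathcal{D},\Omega)$ and invokes the \emph{exactness} of the discrete sequence, $\ker\bigl(\mathcal{D}\vert_{\bm{V}_h(\mathcal{D},\Omega)}\bigr)=\mathcal{D}^-\bigl(V_h(\mathcal{D}^-,\Omega)\bigr)$, to produce $\bm{\phi}_h$ abstractly. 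You instead keep the continuous potential $\bm{\phi}$, apply $\Pi_h^{\mathcal{D}}$ to the whole identity $\bm{u}_h=\bm{\varphi}+\mathcal{D}^-\bm{\phi}$, and use the projection property $\Pi_h^{\mathcal{D}}\bm{u}_h=\bm{u}_h$ together with commutativity of \eqref{pr-eq:de Rham-diagram} to get $\bm{\phi}_h=\Pi_h^{\mathcal{D}^-}\bm{\phi}$ explicitly. Both routes rest on the commuting diagram, and yours has the merit of exhibiting $\bm{\phi}_h$ constructively; but note that it requires applying $\Pi_h^{\mathcal{D}}$ to $\mathcal{D}^-\bm{\phi}$ and $\Pi_h^{\mathcal{D}^-}$ to $\bm{\phi}$, i.e.\ to generic elements of $\mathcal{H}(\mathcal{D},\Omega)$ and $\mathcal{H}(\mathcal{D}^-,\Omega)$ with no extra smoothness. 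The paper's own Lemma~\ref{thm:sec3-lemma1}(i) treats well-definedness of $\Pi_h^{\mathcal{D}}$ as something to be checked even on $\bm{X}(\Omega)$, and the standard commuting quasi-interpolants of Buffa et al.\ are delicate on low-regularity fields; the exactness argument is designed precisely to avoid ever projecting anything other than the regular part $\bm{\varphi}\in(H^1)^3$ and the spline $\bm{u}_h$ itself. Within the framework as the paper states it (projectors defined on the full spaces in \eqref{pr-eq:de Rham-diagram}) your proof goes through; the stability estimates \eqref{eq:discrete-H-curl-estimate}--\eqref{eq:stability-of-semi-discrete-decomposition} are then obtained exactly as in the paper, via Lemma~\ref{thm:sec3-lemma1}(iii), the inverse inequality \eqref{eq:inverse-inequlity-curl}, and triangle inequalities.
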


\begin{proof}
Let $\bm{u}_h \in \bm{V}_{h}(\mathcal{D},\Omega)$. According to Theorem \ref{th:cont-regular-decomposition}, there exists $\bm{\varphi} \in \bm{X}(\Omega)$ such that

\begin{equation}\label{eq:semi-discrete-regular-decomposition}
\left\{ \begin{array}{l}
\mathcal{D} \bm{\varphi} = \mathcal{D} \bm{u}_h  \in \bm{V}_{h}(\mathcal{D}^+,\Omega), \vspace{0.2cm}\\
\|\bm{\varphi}\|_{\bm{X}(\Omega)} \lesssim \|\mathcal{D} \bm{u}_h \|_{\bm{L}^2(\Omega)}, \vspace{0.2cm}\\
\|\bm{\varphi}\|_{\bm{L}^2(\Omega)} \leq \|\bm{u}_h\|_{\bm{L}^2(\Omega)}.
\end{array} \right.
\end{equation}
We now apply Lemma  \ref{thm:sec3-lemma1} and obtain
$$
\mathcal{D} \bm{u}_h = \mathcal{D} \bm{\varphi} = \mathcal{D}\left( \Pi_{h}^{\mathcal{D}} \bm{\varphi}\right),
$$
hence, 
$$\bm{u}_h - \Pi_{h}^{\mathcal{D}} \bm{\varphi} \in \textbf{ker}\left(\mathcal{D} \mid_{\bm{V}_{h}(\mathcal{D},\Omega)} \right)=\mathcal{D}^- \left( V_{h}(\mathcal{D}^-,\Omega)\right).$$ 
Therefore, there exists $\bm{\phi}_h \in V_{h}(\mathcal{D}^-,\Omega)$  such that $\bm{u}_h - \Pi_{h}^{\mathcal{D}} \bm{\varphi}=\mathcal{D}^- \bm{\phi}_h$,  which yields \eqref{eq:semi-discrete-decomposition}.  Property \eqref{eq:discrete-H-curl-estimate} follows from estimates in \eqref{eq:semi-discrete-regular-decomposition}.

We now show \eqref{eq:stability-of-semi-discrete-decomposition}. We write
\begin{eqnarray*}
\|\Pi_{h}^{\mathcal{D}} \bm{\varphi}\|_{\bm{L}^2(\Omega)} & \leq & \|\Pi_{h}^{\mathcal{D}} \bm{\varphi} -\bm{\varphi}\|_{\bm{L}^2(\Omega)}+\|\bm{\varphi}\|_{\bm{L}^2(\Omega)}\\
& \lesssim & h \|\bm{\varphi}\|_{\bm{X}(\Omega)} +  \|\bm{\varphi}\|_{\bm{L}^2(\Omega)}\\
& \lesssim &  h \|\mathcal{D} \bm{u}_h\|_{\bm{L}^2(\Omega)} + \|\bm{u}_h\|_{\bm{L}^2(\Omega)},
\end{eqnarray*}
where in the last estimate we have used first and second estimates in \eqref{eq:semi-discrete-regular-decomposition}. Moreover, using the inverse inequality
\begin{equation}\label{eq:inverse-inequlity-curl}
\|\mathcal{D} \bm{u}_h \|_{\bm{L}^2(\Omega)} \lesssim h^{-1} \|\bm{u}_h\|_{\bm{L}^2(\Omega)},
\end{equation}
we get 
$$
\|\Pi_{h}^{\mathcal{D}} \bm{\varphi}\|_{\bm{L}^2(\Omega)} \lesssim \|\bm{u}_h\|_{\bm{L}^2(\Omega)}.
$$
On the other hand, we have 
\begin{eqnarray*}
\|\mathcal{D}^- \bm{\phi}_h\|_{\bm{L}^2(\Omega)}  &=& \| \bm{u}_h - \Pi_{h}^{\mathcal{D}} \bm{\varphi}\|_{\bm{L}^2(\Omega)}\\
& \lesssim & \| \bm{u}_h \|_{\bm{L}^2(\Omega)} + \| \Pi_{h}^{\mathcal{D}} \bm{\varphi}\|_{\bm{L}^2(\Omega)} \lesssim \| \bm{u}_h \|_{\bm{L}^2(\Omega)},
\end{eqnarray*}
and  inequality \eqref{eq:stability-of-semi-discrete-decomposition} is proved.
\end{proof}

Let $\bm{X}_h(\Omega)$ denotes one of the two discrete spaces $\left(V_h(\bm{grad}, \Omega)\right)^3$ or $\left(V_{h,0}(\bm{grad}, \Omega)\right)^3$, depending if we work with Dirichlet or Neumann boundary condition type (see Table \ref{table2}). 

\begin{lemma}\label{lem-stable-approximation-of-vect-grad}
Every $\bm{\varphi} \in \bm{X}(\Omega)$ admits a stable approximation $\bm{\varphi}_h \in \bm{X}_h(\Omega)$ satisfying
$$
\|\bm{\varphi}_h\|_{\bm{L}^2(\Omega)} \lesssim \|\bm{\varphi}\|_{\bm{L}^2(\Omega)}, \quad \text{and} \quad 
h^{-1} \|\bm{\varphi} -\bm{\varphi}_h \|_{\bm{L}^2(\Omega)} + \|\bm{\varphi}_h\|_{\bm{X}(\Omega)} \lesssim \|\bm{\varphi}\|_{\bm{X}(\Omega)}.
$$  
\end{lemma}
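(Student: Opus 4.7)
The plan is to define $\bm{\varphi}_h$ componentwise as the image of $\bm{\varphi}$ under the quasi-interpolation operator $\Pi_h^{\bm{grad}}$ introduced in Section~\ref{subsec-IGA}, replaced by $\Pi_{h,0}^{\bm{grad}}$ in the presence of essential boundary conditions. Because this operator is the tensor product of the locally $L^2$- and $H^1$-stable univariate projector $\mathcal{P}_h$ (respectively $\mathcal{P}_{h,0}$) of \cite{buffa2011isogeometric}, applying it componentwise yields an element of $\bm{X}_h(\Omega)$ that inherits both stability properties. In the Dirichlet case, the use of $\Pi_{h,0}^{\bm{grad}}$ guarantees that the trace of $\bm{\varphi}_h$ on $\partial\Omega$ vanishes, so that $\bm{\varphi}_h \in (V_{h,0}(\bm{grad},\Omega))^3$ as required.

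With this candidate in hand, I would first verify the $L^2$-stability bound $\|\bm{\varphi}_h\|_{\bm{L}^2(\Omega)} \lesssim \|\bm{\varphi}\|_{\bm{L}^2(\Omega)}$. It follows from the local $L^2$-stability of the underlying univariate projector, propagated to the three-dimensional setting via the tensor-product construction together with the local quasi-uniformity of the mesh; the resulting constant is independent of $h$.

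For the composite estimate, I would invoke Theorem~\ref{error_estimates} twice. Taking $l=0$ and $r=1$ yields
\begin{equation*}
\|\bm{\varphi}-\bm{\varphi}_h\|_{\bm{L}^2(\Omega)} \lesssim h\,\|\bm{\varphi}\|_{\bm{X}(\Omega)},
\end{equation*}
which controls the $h^{-1}\|\bm{\varphi}-\bm{\varphi}_h\|_{\bm{L}^2(\Omega)}$ contribution. Taking $l=r=1$ gives $\|\bm{\varphi}-\bm{\varphi}_h\|_{\bm{X}(\Omega)} \lesssim \|\bm{\varphi}\|_{\bm{X}(\Omega)}$, and the triangle inequality then delivers $\|\bm{\varphi}_h\|_{\bm{X}(\Omega)} \lesssim \|\bm{\varphi}\|_{\bm{X}(\Omega)}$. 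Adding the two contributions produces the required inequality.

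The main subtle point, and essentially what keeps the statement from being a trivial corollary of Theorem~\ref{error_estimates}, is the requirement for a single projector that is simultaneously $L^2$-stable, $H^1$-stable, and trace-preserving when essential boundary conditions are present. This is exactly the property afforded by the Scott--Zhang-type isogeometric quasi-interpolants of \cite{buffa2011isogeometric}; once their availability is accepted, everything else is a routine bookkeeping exercise on top of Theorem~\ref{error_estimates} and the triangle inequality.
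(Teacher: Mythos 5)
Your proposal is correct and follows essentially the same route as the paper: define $\bm{\varphi}_h$ componentwise via $\Pi_h^{\bm{grad}}$ (or $\Pi_{h,0}^{\bm{grad}}$ with essential boundary conditions), then obtain the $L^2$-stability, the $O(h)$ error bound, and the $H^1$-stability from Theorem~\ref{error_estimates} with the parameter choices $(l,r)=(0,0)$, $(0,1)$, $(1,1)$, and finish with the triangle inequality. The only cosmetic difference is that you justify the $L^2$-stability by the local stability of the univariate projector rather than by citing the $l=r=0$ case of Theorem~\ref{error_estimates} directly, which amounts to the same thing.
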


\begin{proof}
Let $\bm{\varphi}:=\left(\varphi^1,\varphi^2,\varphi^3\right) \in \bm{X}(\Omega)$ and define 
$$\bm{\varphi}_h= \left(\Pi_{h}^{\bm{grad}} \varphi^1,\Pi_{h}^{\bm{grad}} \varphi^2,\Pi_{h}^{\bm{grad}} \varphi^3\right) \in \bm{X}_h(\Omega).$$
According to Theorem \ref{error_estimates}, we have:
\begin{equation}\label{eq1}
\|\varphi^k - \Pi_{h}^{\bm{grad}} \varphi^k\|_{L^2(\Omega)} \lesssim   \|\varphi^k\|_{L^2(\Omega)}, \quad k=1,2,3,
\end{equation}
\begin{equation}\label{eq2}
\|\varphi^k - \Pi_{h}^{\bm{grad}} \varphi^k\|_{L^2(\Omega)} \lesssim  h \|\varphi^k\|_{H^1(\Omega)}, \quad k=1,2,3, 
\end{equation}
and 
\begin{equation}\label{eq3}
\|\varphi^k - \Pi_{h}^{\bm{grad}} \varphi^k\|_{H^1(\Omega)} \lesssim \|\varphi^k\|_{H^1(\Omega)}, \quad k=1,2,3. 
\end{equation}
Using \eqref{eq1}, we get 
\begin{eqnarray*}
&& \|\bm{\varphi} -\bm{\varphi}_{h} \|_{\bm{L}^2(\Omega)}^2 \nonumber \\
&& = \|\varphi^1 - \Pi_{h}^{\bm{grad}} \varphi^1\|_{L^2(\Omega)}^2+\|\varphi^2 - \Pi_{h}^{\bm{grad}} \varphi^2\|_{L^2(\Omega)}^2+\|\varphi^3 - \Pi_{h}^{\bm{grad}} \varphi^3\|_{L^2(\Omega)}^2 \nonumber \\
&& \lesssim   \|\varphi^1\|_{L^2(\Omega)}^2 + \|\varphi^2\|_{L^2(\Omega)}^2 + \|\varphi^3\|_{L^2(\Omega)}^2 = \|\bm{\varphi}\|_{\bm{L}^2(\Omega)}^2.
\end{eqnarray*}
From which we deduce that 
\begin{equation*}
\|\bm{\varphi}_h\|_{\bm{L}^2(\Omega)} \leq \|\bm{\varphi} -\bm{\varphi}_h \|_{\bm{L}^2(\Omega)} + \|\bm{\varphi}\|_{\bm{L}^2(\Omega)} \lesssim  \|\bm{\varphi}\|_{\bm{L}^2(\Omega)},
\end{equation*}
which proves the first inequality. 

Similarly, using \eqref{eq2} and \eqref{eq3} we drive
\begin{eqnarray*}
\|\bm{\varphi} -\bm{\varphi}_h \|_{\bm{L}^2(\Omega)}\lesssim   h \left(\|\varphi^1\|_{H^1(\Omega)} + \|\varphi^2\|_{H^1(\Omega)} + \|\varphi^3\|_{H^1(\Omega)} \right) \lesssim \|\bm{\varphi}\|_{\bm{X}(\Omega)},
\end{eqnarray*}
and 
\begin{eqnarray*}
\|\bm{\varphi}_h\|_{\bm{X}(\Omega)} \leq  \|\bm{\varphi} -\bm{\varphi}_h \|_{\bm{X}(\Omega)} + \|\bm{\varphi}\|_{\bm{X}(\Omega)} \lesssim \|\bm{\varphi}\|_{\bm{X}(\Omega)},
\end{eqnarray*}
which conclude the proof of the lemma.
\end{proof}

We have the following regular discrete decomposition.

\begin{proposition}\label{prop:Hitpmair-Xu decomposition}
Let $\tau>0$. Every $\bm{u}_h \in \bm{V}_{h}(\mathcal{D},\Omega)$ has a decomposition 
\begin{equation}\label{eq:Hitpmair-Xu-decomposition}
\bm{u}_h=\bm{w}_h+\Pi_{h}^{\mathcal{D}} \bm{\varphi}_h + \mathcal{D}^- \bm{\phi}_h,
\end{equation}
where $\bm{w}_h \in \bm{V}_{h}(\mathcal{D},\Omega)$, $\bm{\varphi}_h \in  \bm{X}_h(\Omega)$ and $\bm{\phi}_h \in V_{h}(\mathcal{D}^-,\Omega)$ with estimate
\begin{equation}\label{eq:Hitpmair-Xu-decomposition-estimate}
(h^{-2} + \tau) \, \|\bm{w}_h\|_{\bm{L}^2(\Omega)}^2 + \|\bm{\varphi}_h\|_{\bm{X}(\Omega)}^2 + \tau \|\bm{\varphi}_h\|_{\bm{L}^2(\Omega)}^2 + \tau \|\mathcal{D}^- \bm{\phi}_h \|_{\bm{L}^2(\Omega)}^2 \lesssim \|\bm{u}_h\|_{A_{\mathcal{D}}}^2,
\end{equation}
with notation
$$
\|\bm{u}_h\|_{A_{\mathcal{D}}}^2= \|\mathcal{D} \bm{u}_h\|_{\bm{L}^2(\Omega)}^2 + \tau \|\bm{u}_h\|_{\bm{L}^2(\Omega)}^2.
$$
\end{proposition}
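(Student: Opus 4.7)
The plan is to build the decomposition in two stages: first apply the semi-discrete decomposition of Lemma \ref{lem-semi-discrete-decomposition} to produce a continuous representative $\bm{\varphi} \in \bm{X}(\Omega)$, then discretize it via the stable $\bm{X}_h(\Omega)$-approximation of Lemma \ref{lem-stable-approximation-of-vect-grad}, and finally identify the residual as the discrete correction $\bm{w}_h$. Concretely, Lemma \ref{lem-semi-discrete-decomposition} delivers $\bm{u}_h = \Pi_h^{\mathcal{D}}\bm{\varphi} + \mathcal{D}^-\bm{\phi}_h$ with the controls \eqref{eq:discrete-H-curl-estimate}--\eqref{eq:stability-of-semi-discrete-decomposition}; Lemma \ref{lem-stable-approximation-of-vect-grad} then yields $\bm{\varphi}_h \in \bm{X}_h(\Omega)$ approximating $\bm{\varphi}$. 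I would set $\bm{w}_h := \bm{u}_h - \Pi_h^{\mathcal{D}}\bm{\varphi}_h - \mathcal{D}^-\bm{\phi}_h$, which equivalently reads $\bm{w}_h = \Pi_h^{\mathcal{D}}(\bm{\varphi}-\bm{\varphi}_h)$, and observe that $\bm{w}_h \in \bm{V}_h(\mathcal{D},\Omega)$ since $\Pi_h^{\mathcal{D}}$ maps into this space and the de Rham diagram \eqref{pr-eq:de Rham-diagram} places $\mathcal{D}^- V_h(\mathcal{D}^-,\Omega)$ inside $\bm{V}_h(\mathcal{D},\Omega)$.

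With the decomposition fixed, most of the five summands in \eqref{eq:Hitpmair-Xu-decomposition-estimate} are handled by the two lemmas above: the controls $\|\bm{\varphi}_h\|_{\bm{X}(\Omega)}^2 \lesssim \|\mathcal{D}\bm{u}_h\|_{\bm{L}^2(\Omega)}^2$ and $\tau\|\bm{\varphi}_h\|_{\bm{L}^2(\Omega)}^2 \lesssim \tau\|\bm{u}_h\|_{\bm{L}^2(\Omega)}^2$ follow from combining Lemma \ref{lem-stable-approximation-of-vect-grad} with \eqref{eq:discrete-H-curl-estimate}, while $\tau\|\mathcal{D}^-\bm{\phi}_h\|_{\bm{L}^2(\Omega)}^2 \lesssim \tau\|\bm{u}_h\|_{\bm{L}^2(\Omega)}^2$ is immediate from \eqref{eq:stability-of-semi-discrete-decomposition}. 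The remaining two bounds on $\bm{w}_h$ both reduce to establishing the key estimate $\|\bm{w}_h\|_{\bm{L}^2(\Omega)} \lesssim h\,\|\bm{\varphi}\|_{\bm{X}(\Omega)}$: composed with \eqref{eq:discrete-H-curl-estimate} this yields $h^{-2}\|\bm{w}_h\|_{\bm{L}^2(\Omega)}^2 \lesssim \|\mathcal{D}\bm{u}_h\|_{\bm{L}^2(\Omega)}^2$, and composed further with the inverse inequality \eqref{eq:inverse-inequlity-curl} it gives $\tau\|\bm{w}_h\|_{\bm{L}^2(\Omega)}^2 \lesssim \tau\|\bm{u}_h\|_{\bm{L}^2(\Omega)}^2$. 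To prove that key estimate I would split via the identity
\begin{equation*}
\Pi_h^{\mathcal{D}}(\bm{\varphi}-\bm{\varphi}_h) = (\Pi_h^{\mathcal{D}}\bm{\varphi} - \bm{\varphi}) + (\bm{\varphi} - \bm{\varphi}_h) + (\bm{\varphi}_h - \Pi_h^{\mathcal{D}}\bm{\varphi}_h),
\end{equation*}
and apply the triangle inequality, controlling the first piece by Lemma \ref{thm:sec3-lemma1}(iii), the second by Lemma \ref{lem-stable-approximation-of-vect-grad}, and the third by a componentwise $H^1$-to-$L^2$ approximation estimate for $\Pi_h^{\mathcal{D}}$, followed by the stability $\|\bm{\varphi}_h\|_{\bm{X}(\Omega)} \lesssim \|\bm{\varphi}\|_{\bm{X}(\Omega)}$.

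The step I expect to require the most care is precisely the last of these three pieces. Lemma \ref{thm:sec3-lemma1}(iii) cannot be invoked directly for $\bm{\varphi}_h$ because its hypothesis $\mathcal{D}\bm{\varphi} \in V_h(\mathcal{D}^+,\Omega)$ is not automatic on the discretized field: the spline spaces $\bm{X}_h(\Omega)=\left(V_h(\bm{grad},\Omega)\right)^3$ and $V_h(\mathcal{D}^+,\Omega)$ are not related by a simple inclusion in the IgA setting (the univariate degrees are shifted differently in each factor of the tensor product). The way around is to invoke the general approximation estimate of Theorem \ref{error_estimates} with $l=0$, $r=1$, applied componentwise to $\bm{\varphi}_h \in \bm{X}_h(\Omega) \subset \bm{H}^1(\Omega)$, which requires no compatibility between $\mathcal{D}\bm{\varphi}_h$ and the discrete space. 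Once this is secured, all five bounds collapse into \eqref{eq:Hitpmair-Xu-decomposition-estimate} by summation.
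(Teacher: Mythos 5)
Your proposal is correct and follows essentially the same route as the paper's proof: the same two-stage construction (Lemma \ref{lem-semi-discrete-decomposition} followed by the stable approximation of Lemma \ref{lem-stable-approximation-of-vect-grad}), the same definition $\bm{w}_h=\Pi_{h}^{\mathcal{D}}(\bm{\varphi}-\bm{\varphi}_h)$, and the same treatment of the $\bm{\varphi}_h$ and $\mathcal{D}^-\bm{\phi}_h$ terms. The only point of divergence is the key bound $\|\bm{w}_h\|_{\bm{L}^2(\Omega)} \lesssim h\,\|\bm{\varphi}\|_{\bm{X}(\Omega)}$: the paper gets it in one line by using the $L^2$-stability of the quasi-interpolant, $\|\Pi_{h}^{\mathcal{D}}(\bm{\varphi}-\bm{\varphi}_h)\|_{\bm{L}^2(\Omega)} \lesssim \|\bm{\varphi}-\bm{\varphi}_h\|_{\bm{L}^2(\Omega)}$ (which is Theorem \ref{error_estimates} with $l=r=0$), and then invoking $h^{-1}\|\bm{\varphi}-\bm{\varphi}_h\|_{\bm{L}^2(\Omega)} \lesssim \|\bm{\varphi}\|_{\bm{X}(\Omega)}$ from Lemma \ref{lem-stable-approximation-of-vect-grad}. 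Your three-term split via $(\Pi_h^{\mathcal{D}}\bm{\varphi}-\bm{\varphi})+(\bm{\varphi}-\bm{\varphi}_h)+(\bm{\varphi}_h-\Pi_h^{\mathcal{D}}\bm{\varphi}_h)$ reaches the same conclusion with a bit more work, and your observation that Lemma \ref{thm:sec3-lemma1}(iii) applies to $\bm{\varphi}$ (since $\mathcal{D}\bm{\varphi}=\mathcal{D}\bm{u}_h\in V_h(\mathcal{D}^+,\Omega)$) but not to $\bm{\varphi}_h$ is accurate; using Theorem \ref{error_estimates} with $l=0$, $r=1$ on the third piece is a correct fix. Both arguments are legitimate; the paper's is merely the more economical one.
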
 

\begin{proof}
Let $\bm{u}_h \in \bm{V}_{h}(\mathcal{D},\Omega)$. Using lemma \eqref{lem-semi-discrete-decomposition} we can find 
$\bm{\varphi} \in \bm{X}(\Omega)$ and $\bm{\phi}_h \in V_{h}(\mathcal{D}^-,\Omega)$ with properties
\begin{equation}\label{pr-eq:semi-discrete-decomposition}
\left\{\begin{array}{l}
\bm{u}_h = \Pi_{h}^{\mathcal{D}} \bm{\varphi} + \mathcal{D}^- \phi_h \vspace{0.2cm}\\
\|\bm{\varphi}\|_{\bm{X}(\Omega)} \lesssim \|\mathcal{D} \bm{u}_h \|_{\bm{L}^2(\Omega)} \vspace{0.25cm}\\
\|\bm{\varphi}\|_{\bm{L}^2(\Omega)} \leq \| \bm{u}_h \|_{\bm{L}^2(\Omega)} \vspace{0.25cm}\\
\|\Pi_{h}^{\mathcal{D}} \bm{\varphi}\|_{\bm{L}^2(\Omega)} + \|\mathcal{D}^- \bm{\phi}_h \|_{\bm{L}^2(\Omega)}\lesssim \|\bm{u}_h\|_{\bm{L}^2(\Omega)},
\end{array} \right.
\end{equation}
and let $\bm{\varphi}_h \in \bm{X}_h(\Omega)$ be the stable approximation of $\bm{\varphi}$, given by Lemma \ref{lem-stable-approximation-of-vect-grad}. We define
$$
\bm{w}_h=\Pi_{h}^{\mathcal{D}} (\bm{\varphi}-\bm{\varphi}_h).
$$
In this way, using the decomposition in \eqref{pr-eq:semi-discrete-decomposition}, we obtain
\begin{eqnarray*}
\bm{u}_h = \Pi_{h}^{\mathcal{D}} \bm{\varphi} + \mathcal{D}^- \bm{\phi}_h &=& \Pi_{h}^{\mathcal{D}} (\bm{\varphi}-\bm{\varphi}_h)+ \Pi_{h}^{\mathcal{D}} \bm{\varphi}_h+ \mathcal{D}^- \bm{\phi}_h \\
&=& \bm{w}_h+ \Pi_{h}^{\mathcal{D}} \bm{\varphi}_h+ \mathcal{D}^- \bm{\phi}_h,
\end{eqnarray*}
and decomposition \eqref{eq:Hitpmair-Xu-decomposition} is proved. In order to show \eqref{eq:Hitpmair-Xu-decomposition-estimate}, we need to perform careful estimates. Indeed, we have
\begin{eqnarray}\label{pr-eq:semi-discrete-decomposition-1}
h^{-1} \|\bm{w}_h\|_{\bm{L}^2(\Omega)} &=& h^{-1} \|\Pi_{h}^{\mathcal{D}} (\bm{\varphi}-\bm{\varphi}_h)\|_{\bm{L}^2(\Omega)}  \nonumber \\ 
& \lesssim & h^{-1}  \|\bm{\varphi}-\bm{\varphi}_h)\|_{\bm{L}^2(\Omega)}
 \lesssim  \|\bm{\varphi}\|_{\bm{X}(\Omega)} \lesssim \|\mathcal{D} \bm{u}_h\|_{\bm{L}^2(\Omega)},
\end{eqnarray}
where in the last estimate we have used the first inequality in \eqref{pr-eq:semi-discrete-decomposition}. Moreover, using the inverse inequality \eqref{eq:inverse-inequlity-curl} we get
\begin{equation}
\|\bm{w}_h\|_{\bm{L}^2(\Omega)} \lesssim h \|\mathcal{D} \bm{u}_h \|_{\bm{L}^2(\Omega)} \lesssim   \|\bm{u}_h\|_{\bm{L}^2(\Omega)}.
\end{equation}
Concerning the component $\bm{\varphi}_h$, we use the first and the second inequalities in \eqref{pr-eq:semi-discrete-decomposition} to obtain
\begin{equation}
\|\bm{\varphi}_h\|_{\bm{X}(\Omega)} \lesssim \|\bm{\varphi}\|_{\bm{X}(\Omega)} \lesssim \|\mathcal{D} \bm{u}_h \|_{\bm{L}^2(\Omega)},
\end{equation}
and
\begin{equation}\label{pr-eq:semi-discrete-decomposition-4}
\|\bm{\varphi}_h\|_{\bm{L}^2(\Omega)} \lesssim \|\bm{\varphi}\|_{\bm{L}^2(\Omega)}    \lesssim \| \bm{u}_h \|_{\bm{L}^2(\Omega)}.
\end{equation}
Combining \eqref{pr-eq:semi-discrete-decomposition-1}--\eqref{pr-eq:semi-discrete-decomposition-4} together with third estimate in \eqref{pr-eq:semi-discrete-decomposition} we obtain the desired estimate  \eqref{eq:Hitpmair-Xu-decomposition-estimate}. This complete the proof of the proposition. 
\end{proof}

Proposition \ref{prop:Hitpmair-Xu decomposition} forms the basis for applying the auxiliary space theory described in Section \ref{subsec:Auxiliary-Space-Preconditioning-Method}. It offers a strategy for selecting suitable auxiliary spaces and projections, as discussed in the next subsection, and provides clear instructions for choosing a smoothing operator, which must satisfy the following condition: 
$$s(\bm{w}_h, \bm{w}_h) \approx (h^{-2} + \tau) \, \|\bm{w}_h\|_{\bm{L}^2(\Omega)}^2.$$ Typically, a smoother such as Jacobi or Gauss-Seidel is used, which depends on the choice of the bases of the discrete spaces, similar to the multigrid method.

Next, we will improve estimate \eqref{eq:Hitpmair-Xu-decomposition-estimate} to apply a Jacobi smoothing method, by first constructing suitable bases for the discrete spaces. We adopt the following set of basis functions, as proposed in \cite{buffa2014isogeometric, da2014mathematical, ratnani2012arbitrary}:

$$
\mathcal{B}(\bm{grad}) = \Big\{ B_{i_1, p_1} \otimes B_{i_2, p_2} \otimes B_{i_3, p_3}: \quad 1 \leq i_l \leq n_l, \quad l=1, 2, 3 \Big\}, 
$$
\begin{align*}
\mathcal{B}(\bm{curl})  = \left\{ (D_{i_1, p_1-1} \otimes B_{i_2, p_2} \otimes B_{i_3, p_3}) \bm{e}_1 : \; 1 \leq i_1 \leq n_1-1, \, 1 \leq i_l \leq n_l, \, l=2, 3 \right\} & \\
  \cup \left\{ (B_{j_1, p_1} \otimes D_{j_2, p_2-1} \otimes B_{j_3, p_3})\bm{e}_2: \; 1 \leq j_2 \leq n_2-1, \, 1 \leq j_l \leq n_l, \, l=1, 3 \right\} &\\
 \cup \left\{ (B_{k_1, p_1} \otimes B_{k_2, p_2} \otimes D_{k_3, p_3-1})\bm{e}_3 \; 1 \leq k_3 \leq n_3-1, \, 1 \leq k_l \leq n_l, \, l=1, 2 \right\} &, \vspace{0.3cm}
\end{align*}
\begin{align*}
\mathcal{B}(div) = \left\{(B_{i_1, p_1} \otimes D_{i_2, p_21} \otimes D_{i_3, p_3-1})\bm{e}_1: \; 1 \leq i_1 \leq n_1, \, 1 \leq i_l \leq n_l-1, \, l=2, 3 \right\} & \\
\cup \left\{(D_{j_1, p_1-1} \otimes B_{j_2, p_2} \otimes D_{j_3, p_3-1})\bm{e}_2: \; 1 \leq j_2 \leq n_2, \, 1 \leq j_l \leq n_l-1, \, l=1, 3 \right\} & \\
\cup \left\{ (D_{k_1, p_1-1} \otimes D_{k_2, p_2-1} \otimes B_{k_3, p_3})\bm{e}_3: \; 1 \leq k_3 \leq n_3, \, 1 \leq k_l \leq n_l-1, \, l=1, 2 \right\} &,
\end{align*}
where $\{\bm{e}_l\}_{l=1, 2, 3}$ is the canonical basis of $\mathbb{R}^3$ and $D_{i,p-1}$ stands for Curry–Schoenberg spline basis (see, e.g., \cite{ratnani2012arbitrary})
$$
D_{i, p-1}(t) = \frac{p}{t_{i+p+1}-t_{i+1}} B_{i+1, p-1}(t), \quad t \in [0, 1]. 
$$

In the case with boundary conditions, we introduce
$$
\mathcal{B}_0(\bm{grad}) = \Big\{ B_{i_1, p_1} \otimes B_{i_2, p_2} \otimes B_{i_3, p_3}: \quad 2 \leq i_l \leq n_l-1, \quad l=1, 2, 3 \Big\}, 
$$
\begin{align*}
\mathcal{B}_0(\bm{curl})  = \left\{ (D_{i_1, p_1-1} \otimes B_{i_2, p_2} \otimes B_{i_3, p_3}) \bm{e}_1 : \; 1 \leq i_1 \leq n_1-1, \, 2 \leq i_l \leq n_l-1, \, l=2, 3 \right\} & \\
  \cup \left\{ (B_{j_1, p_1} \otimes D_{j_2, p_2-1} \otimes B_{j_3, p_3})\bm{e}_2: \; 1 \leq j_2 \leq n_2-1, \, 2 \leq j_l \leq n_l-1, \, l=1, 3 \right\} &\\
 \cup \left\{ (B_{k_1, p_1} \otimes B_{k_2, p_2} \otimes D_{k_3, p_3-1})\bm{e}_3 \; 1 \leq k_3 \leq n_3-1, \, 2 \leq k_l \leq n_l-1, \, l=1, 2 \right\} &, \vspace{0.3cm}
\end{align*}
\begin{align*}
\mathcal{B}_0(div) = \left\{(B_{i_1, p_1} \otimes D_{i_2, p_21} \otimes D_{i_3, p_3-1})\bm{e}_1: \; 2 \leq i_1 \leq n_1-1, \, 1 \leq i_l \leq n_l-1, \, l=2, 3 \right\} & \\
\cup \left\{(D_{j_1, p_1-1} \otimes B_{j_2, p_2} \otimes D_{j_3, p_3-1})\bm{e}_2: \; 2 \leq j_2 \leq n_2-1, \, 1 \leq j_l \leq n_l-1, \, l=1, 3 \right\} & \\
\cup \left\{ (D_{k_1, p_1-1} \otimes D_{k_2, p_2-1} \otimes B_{k_3, p_3})\bm{e}_3: \; 2 \leq k_3 \leq n_3-1, \, 1 \leq k_l \leq n_l-1, \, l=1, 2 \right\} &,
\end{align*}

We clearly have
\begin{equation*}
\left\{\begin{array}{l}
V_{h}(\textbf{grad},\Omega) = span\left(\mathcal{B}(\bm{grad}) \right), \vspace{0.2cm}\\
V_{h,0}(\textbf{grad},\Omega) = span\left(\mathcal{B}_0(\bm{grad})\right), \vspace{0.2cm}\\
\bm{V}_{h}(\mathcal{D},\Omega) = span\left(\mathcal{B}(\mathcal{D})  \right), 
\end{array} \right.
\end{equation*}
where in the unified notation, as usual, we dropped the index $0$ in the case with boundary conditions.

We will make use of the following $L^2$-stability of spline basis functions (see \cite{kunoth2018foundations}).
 
\begin{theorem}\label{theroem-l2-stability}
Let $(B_{i,p})_{1 \leq i \leq n}$ and $(D_{j,p})_{1 \leq j \leq n-1}$ denote, respectively, the $B$-spline and the Curry–Schoenberg spline bases associated to knot vector $T$. Then, we have 
\begin{equation*}
h \sum_{i=1}^n b_i^2 \approx \left\| \sum_{i=1}^n b_i B_{i, p}  \right\|_{L^2(0, 1)}^2, \quad  \quad \sum_{j=1}^{n-1} d_j^2 \approx \left\| \sum_{j=1}^{n-1} d_j D_{j, p-1}  \right\|_{L^2(0, 1)}^2,
\end{equation*}
for all vectors $(b_1, \cdots, b_n)$ and $(d_1, \cdots, d_{n-1})$. 

In particular,
\begin{equation*}
 \left\| B_{i, p}  \right\|_{L^2(0, 1)}^2 \approx h, \quad \left\|  D_{j, p-1}  \right\|_{L^2(0, 1)}^2 \approx 1, \quad \forall 1 \leq i \leq n, \quad \forall 1 \leq j \leq n-1.
\end{equation*}
\end{theorem}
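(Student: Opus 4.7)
The plan is to deduce both norm equivalences from the classical $L^2$-stability theory of B-splines (essentially de~Boor's theorem), combined with the local finite-overlap structure of B-splines and the local quasi-uniformity assumed on the mesh. The argument splits naturally into an easy upper bound and a more delicate lower bound, with the Curry--Schoenberg statement derived from the B-spline statement by an explicit rescaling.

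First I would record three structural facts that drive the whole proof: (i) at every point of $(0,1)$ only at most $p+1$ of the $B_{i,p}$ (and a similarly bounded number of the $D_{j,p-1}$) are non-zero, and each basis function is supported on at most $p+1$ consecutive knot intervals; (ii) under local quasi-uniformity with constant $\theta$, every non-empty knot interval has length of order $h$; and (iii) the bounds $0\le B_{i,p}\le 1$ together with a $(p,\theta)$-dependent lower bound on the maximum of $B_{i,p}$ give $\|B_{i,p}\|_{L^2(0,1)}^2\approx h$, while the explicit relation $D_{i,p-1}=\frac{p}{t_{i+p+1}-t_{i+1}}B_{i+1,p-1}$ with $t_{i+p+1}-t_{i+1}\approx h$ yields the companion norm estimate for $D_{j,p-1}$ by rescaling. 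With these in hand, the upper bound in the B-spline stability estimate is obtained by partitioning $(0,1)$ into knot intervals, applying Cauchy--Schwarz to the at-most-$(p+1)$ active terms on each interval, using (i)--(iii), and summing over intervals.

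For the lower bound I would invoke de~Boor's dual functionals: there exist linear functionals $\lambda_i$ on $L^2$, each supported in the local patch of $B_{i,p}$, satisfying $\lambda_i(B_{j,p})=\delta_{ij}$ together with the continuity estimate $\|\lambda_i\|_{(L^2)^{\ast}}\lesssim h^{-1/2}$. Evaluating $\lambda_i$ on $f=\sum_j b_j B_{j,p}$ gives the local control $|b_i|^2\lesssim h^{-1}\|f\|^2_{L^2(\mathrm{patch}_i)}$; summation with the bounded overlap from (i) then delivers $h\sum_i b_i^2\lesssim \|f\|^2_{L^2(0,1)}$, which is the desired lower bound. The Curry--Schoenberg equivalence then follows from exactly the same template after the normalization: the factor $\frac{p}{t_{i+p+1}-t_{i+1}}\approx h^{-1}$ converts the $h$-weighted stability of $B_{i+1,p-1}$ into the $h$-free equivalence for $D_{j,p-1}$. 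The individual-norm assertions are obtained by specializing the main equivalences to the canonical unit coefficient vectors.

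The main technical obstacle is the lower bound: controlling each coefficient $b_i$ (or $d_j$) by the global $L^2$-norm of the spline cannot be achieved by purely local arguments and genuinely relies on de~Boor's construction of the dual basis (or an equivalent Peano-kernel / recurrence argument), which is precisely the content of the cited reference \cite{kunoth2018foundations}. The rest of the proof is bookkeeping with the finite-overlap and local quasi-uniformity properties and a single explicit rescaling relating $B_{i+1,p-1}$ to $D_{i,p-1}$.
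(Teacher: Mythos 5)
The paper offers no proof of this theorem: it is imported verbatim from the cited reference \cite{kunoth2018foundations}, so there is nothing internal to compare your argument against. Your outline for the first equivalence is the standard and correct route — bounded overlap plus Cauchy--Schwarz and $\|B_{i,p}\|_{L^2}^2\approx h$ for the upper bound, and de~Boor's locally supported dual functionals with $\|\lambda_i\|_{(L^2)^{\ast}}\lesssim h^{-1/2}$ for the lower bound, with local quasi-uniformity guaranteeing that all knot intervals in a given support have comparable length. That part is sound.

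The step that does not close as written is the Curry--Schoenberg rescaling. Writing $\sum_j d_j D_{j,p-1}=\sum_j \bigl(d_j\,\tfrac{p}{t_{j+p+1}-t_{j+1}}\bigr)B_{j+1,p-1}$ and applying the degree-$(p-1)$ B-spline stability you just proved gives
\begin{equation*}
\Bigl\|\sum_j d_j D_{j,p-1}\Bigr\|_{L^2(0,1)}^2 \;\approx\; h\sum_j d_j^2\Bigl(\tfrac{p}{t_{j+p+1}-t_{j+1}}\Bigr)^2 \;\approx\; h\cdot h^{-2}\sum_j d_j^2 \;=\; h^{-1}\sum_j d_j^2,
\end{equation*}
and likewise $\|D_{j,p-1}\|_{L^2}^2\approx h^{-2}\cdot h= h^{-1}$ (check the case $p=1$: $D_{j,0}=h^{-1}\chi_{[t_{j+1},t_{j+2})}$ has squared norm $h^{-1}$). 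So the factor $\tfrac{p}{t_{j+p+1}-t_{j+1}}\approx h^{-1}$ does \emph{not} convert the $h$-weighted equivalence into an $h$-free one; a power $h^{-1}$ survives. Your claim that the rescaling yields $\sum_j d_j^2\approx\|\sum_j d_j D_{j,p-1}\|^2$ is therefore an arithmetic slip (that would require a factor $\approx h^{-1/2}$). Note that the honest computation contradicts the normalization in the statement itself, so before patching the bookkeeping you should decide which normalization of the equivalence you are actually proving; as it stands, your method, carried out correctly, establishes $h^{-1}\sum_j d_j^2\approx\|\sum_j d_j D_{j,p-1}\|_{L^2(0,1)}^2$ rather than the displayed identity.
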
 

A direct consequence of this theorem is the following stability result:

\begin{corollary}
The bases $\mathcal{B}(D)$ are  $L^2$-stables, i.e 
\begin{equation}\label{eq:l2-stability}
\left\| \sum_{\substack{r \\ \bm{v}_{r} \in \mathcal{B}(\mathcal{D})}} c_{r} \, \bm{v}_{r} \right\|^2_{\bm{L}^2(\Omega)} \approx \sum_{\substack{r \\ \bm{v}_{r} \in \mathcal{B}(\mathcal{D})}} c_{r}^2 \left\| \bm{v}_{r} \right\|^2_{\bm{L}^2(\Omega)}, 
\end{equation}
for any vector $(c_{r}) \in \mathbb{R}^{\# \mathcal{B}(\mathcal{D})}$.
\end{corollary}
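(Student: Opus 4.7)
The plan is to exploit two structural features of $\mathcal{B}(\mathcal{D})$: each basis function is a scalar tensor product of univariate $B$-spline or Curry--Schoenberg basis functions multiplied by one of the canonical vectors $\bm{e}_1,\bm{e}_2,\bm{e}_3$, and basis elements attached to different $\bm{e}_l$ are pointwise $\mathbb{R}^3$-orthogonal. This reduces the claim to a scalar tensor-product stability statement, which then follows by iterated application of Theorem \ref{theroem-l2-stability}.

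First I would split $\mathcal{B}(\mathcal{D}) = \mathcal{B}^{(1)}(\mathcal{D}) \cup \mathcal{B}^{(2)}(\mathcal{D}) \cup \mathcal{B}^{(3)}(\mathcal{D})$ according to the component index $l$ carried by $\bm{e}_l$. Since $(\phi\,\bm{e}_l)\cdot(\psi\,\bm{e}_{l'}) = \phi\psi\,\delta_{ll'}$, the partial sums in the three subgroups are pointwise orthogonal in $\mathbb{R}^3$ and therefore $\bm{L}^2$-orthogonal, giving
$$
\left\|\sum_{r} c_r\,\bm{v}_r\right\|^2_{\bm{L}^2(\Omega)}
= \sum_{l=1}^3 \left\|\sum_{\bm{v}_r \in \mathcal{B}^{(l)}(\mathcal{D})} c_r\,\bm{v}_r\right\|^2_{\bm{L}^2(\Omega)},
$$
and the right-hand side of \eqref{eq:l2-stability} splits along the same partition. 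It therefore suffices to prove the equivalence within each $\mathcal{B}^{(l)}(\mathcal{D})$.

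Next I would put Theorem \ref{theroem-l2-stability} in a uniform form: using $\|B_{i,p}\|^2_{L^2(0,1)} \approx h$ and $\|D_{j,p-1}\|^2_{L^2(0,1)} \approx 1$, the two univariate estimates combine into
$$
\sum_i c_i^2\,\|\phi_i\|^2_{L^2(0,1)} \approx \left\|\sum_i c_i\,\phi_i\right\|^2_{L^2(0,1)},
$$
where $\phi_i$ stands indifferently for a $B$- or a Curry--Schoenberg basis function. Writing a generic element of $\mathcal{B}^{(l)}(\mathcal{D})$ as $(\phi^1_{i_1}\otimes\phi^2_{i_2}\otimes\phi^3_{i_3})\,\bm{e}_l$ with each $\phi^s_{i_s}$ of one of these two types, I would iterate the 1D estimate via Fubini: integrate first in $x_1$ at fixed $(x_2,x_3)$, apply the univariate equivalence pointwise in $(x_2,x_3)$, then repeat in $x_2$ and $x_3$. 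This yields
$$
\left\|\sum_{i_1,i_2,i_3} c_{i_1,i_2,i_3}\,\phi^1_{i_1}\otimes\phi^2_{i_2}\otimes\phi^3_{i_3}\right\|^2_{L^2(\Omega)}
\approx \sum_{i_1,i_2,i_3} c_{i_1,i_2,i_3}^2 \prod_{s=1}^{3}\|\phi^s_{i_s}\|^2_{L^2(0,1)},
$$
and the product on the right equals $\|(\phi^1_{i_1}\otimes\phi^2_{i_2}\otimes\phi^3_{i_3})\bm{e}_l\|^2_{\bm{L}^2(\Omega)}$ by tensor-product factorization of the $L^2$-norm. Summing over $l$ recovers \eqref{eq:l2-stability}.

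The main obstacle is not mathematical depth but bookkeeping: verifying that, when the 1D equivalence is applied pointwise in the remaining variables, the implicit constants are uniform in those variables so that the estimates survive integration, and that the successive applications across the three directions compose into constants depending only on $p_1, p_2, p_3$ and the local quasi-uniformity parameter $\theta$. Both facts are guaranteed by Theorem \ref{theroem-l2-stability}, whose constants are intrinsic to the univariate spline spaces and independent of the auxiliary coordinates.
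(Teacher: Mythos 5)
Your proposal is correct and follows essentially the same route as the paper: the paper restates the univariate stability of Theorem \ref{theroem-l2-stability} in the weighted form $\sum_i c_i^2\|\phi_i\|^2_{L^2(0,1)} \approx \|\sum_i c_i\phi_i\|^2_{L^2(0,1)}$ and then concludes "by tensorization and applying the last estimates on each coordinate." You simply make explicit the two steps the paper leaves implicit — the $\bm{L}^2$-orthogonal splitting of $\mathcal{B}(\mathcal{D})$ by canonical component $\bm{e}_l$ and the coordinate-by-coordinate Fubini iteration with constants uniform in the frozen variables — both of which are sound.
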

\begin{proof}
From Theorem \ref{theroem-l2-stability} we deduce that for all vectors $(b_1, \cdots, b_n)$ and $(d_1, \cdots, d_{n-1})$ we have 
$$
\sum_{i=1}^n b_i^2 \left\| B_{i,p} \right\|_{L^2(0, 1)}^2 \approx h \sum_{i=1}^n b_i^2 \approx \left\| \sum_{i=1}^n b_i B_{i, p}  \right\|_{L^2(0, 1)}^2,
$$
and 
$$
\sum_{j=1}^{n-1} d_j^2 \left\| D_{j,p-1} \right\|_{L^2(0, 1)}^2 \approx \sum_{j=1}^{n-1} d_j^2 \approx \left\| \sum_{j=1}^{n-1} d_j D_{j, p-1}  \right\|_{L^2(0, 1)}^2.
$$
The bound \eqref{eq:l2-stability} follows by tensorization and applying last estimates on each coordinate.
\end{proof}

We prove the following stable decomposition result.

\begin{theorem}[Hitpmair-Xu decomposition]\label{prop:stable-Hitpmair-Xu-decomposition}
Let $\tau>0$. For each $\bm{u}_h \in \bm{V}_{h}(\mathcal{D},\Omega)$, there exit $\bm{w}_h \in \bm{V}_{h}(\mathcal{D},\Omega)$, $\bm{\varphi}_h \in  \bm{X}_h(\Omega)$ and $\bm{\phi}_h \in V_{h}(\mathcal{D}^-,\Omega)$ such that   
\begin{equation}\label{eq:stable-Hitpmair-Xu-decomposition}
\bm{u}_h=\bm{w}_h+\Pi_{h}^{\mathcal{D}} \bm{\varphi}_h + \mathcal{D}^- \bm{\phi}_h.
\end{equation}
In addition, expanding the component $\bm{w}_h$ on the basis  $\mathcal{B}(\mathcal{D})$ 
$$
\bm{w}_h =  \sum_{\substack{r \\ \bm{v}_{r} \in \mathcal{B}(\mathcal{D})}} c_{r} \, \bm{v}_{r},
$$
we have 
\begin{equation}\label{eq:stable-Hitpmair-Xu-decomposition-estimate-2}
\sum_{\substack{r \\ \bm{v}_{r} \in \mathcal{B}(\mathcal{D})}} c_{r}^2 \left\| \bm{v}_{r} \right\|^2_{A_{\mathcal{D}}} + \|\bm{\varphi}_h\|_{\bm{X}(\Omega)}^2 + \tau \|\bm{\varphi}_h\|_{\bm{L}^2(\Omega)}^2 + \|\mathcal{D}^- \phi_h \|_{A_{\mathcal{D}}}^2 \lesssim \|\bm{u}_h\|_{A_{\mathcal{D}}}^2.
\end{equation}
\end{theorem}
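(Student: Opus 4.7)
The plan is to keep the same triple $(\bm{w}_h, \bm{\varphi}_h, \bm{\phi}_h)$ delivered by Proposition \ref{prop:Hitpmair-Xu decomposition}, reuse the decomposition \eqref{eq:Hitpmair-Xu-decomposition} verbatim, and only upgrade two of the four summands in \eqref{eq:Hitpmair-Xu-decomposition-estimate} to the sharper quantities appearing in \eqref{eq:stable-Hitpmair-Xu-decomposition-estimate-2}. The two $\bm{\varphi}_h$-contributions already match, so all of the work is concentrated on recasting $\tau\|\mathcal{D}^-\bm{\phi}_h\|_{\bm{L}^2(\Omega)}^2$ as $\|\mathcal{D}^-\bm{\phi}_h\|_{A_{\mathcal{D}}}^2$ and on recasting $(h^{-2}+\tau)\|\bm{w}_h\|_{\bm{L}^2(\Omega)}^2$ as the coefficientwise expression $\sum_r c_r^2\|\bm{v}_r\|_{A_{\mathcal{D}}}^2$.

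The first recasting is essentially free and is dictated by the de Rham structure. Since the complex in the Proposition preceding Table \ref{table1} satisfies $\mathcal{D}\mathcal{D}^{-} = 0$, the field $\mathcal{D}^{-}\bm{\phi}_h$ lies in the kernel of $\mathcal{D}$, so $\|\mathcal{D}^{-}\bm{\phi}_h\|_{A_{\mathcal{D}}}^2$ collapses to $\tau\|\mathcal{D}^{-}\bm{\phi}_h\|_{\bm{L}^2(\Omega)}^2$, which is already dominated by $\|\bm{u}_h\|_{A_{\mathcal{D}}}^2$ through \eqref{eq:Hitpmair-Xu-decomposition-estimate}.

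The second recasting is the core step and leverages the tensor-product basis $\mathcal{B}(\mathcal{D})$ introduced just above the statement. Because each $\bm{v}_r \in \mathcal{B}(\mathcal{D})$ is itself a member of $\bm{V}_h(\mathcal{D},\Omega)$, the inverse inequality \eqref{eq:inverse-inequlity-curl} applied to $\bm{v}_r$ yields $\|\bm{v}_r\|_{A_{\mathcal{D}}}^2 \lesssim (h^{-2}+\tau)\|\bm{v}_r\|_{\bm{L}^2(\Omega)}^2$; multiplying by $c_r^2$, summing over $r$, and invoking the $L^2$-stability \eqref{eq:l2-stability} converts this into
\[ \sum_r c_r^2\|\bm{v}_r\|_{A_{\mathcal{D}}}^2 \lesssim (h^{-2}+\tau)\sum_r c_r^2\|\bm{v}_r\|_{\bm{L}^2(\Omega)}^2 \approx (h^{-2}+\tau)\|\bm{w}_h\|_{\bm{L}^2(\Omega)}^2, \]
which \eqref{eq:Hitpmair-Xu-decomposition-estimate} bounds by $\|\bm{u}_h\|_{A_{\mathcal{D}}}^2$. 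Summing the three upgraded contributions then gives \eqref{eq:stable-Hitpmair-Xu-decomposition-estimate-2}.

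The main obstacle, to the extent there is one, is not analytic but conceptual: one has to notice that the inverse inequality can be invoked function-by-function on the basis, and that the $L^2$-stability Corollary provides a two-sided comparison between coefficients and $L^2$-norm that is tight enough not to reintroduce powers of $h$ in the roundtrip. All the genuinely hard analysis — the semi-discrete regular decomposition via $\Pi_h^{\mathcal{D}}$, the stable $\bm{X}_h(\Omega)$-approximation, and the $h$-weighted balancing — has already been done in Proposition \ref{prop:Hitpmair-Xu decomposition}, so what remains is a short algebraic reformulation rather than a new estimate.
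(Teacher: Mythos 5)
Your proposal is correct and follows essentially the same route as the paper's proof: keep the triple from Proposition \ref{prop:Hitpmair-Xu decomposition}, use $\mathcal{D}\mathcal{D}^{-}=0$ to collapse $\|\mathcal{D}^-\bm{\phi}_h\|_{A_{\mathcal{D}}}^2$ to $\tau\|\mathcal{D}^-\bm{\phi}_h\|_{\bm{L}^2(\Omega)}^2$, and combine the inverse inequality \eqref{eq:inverse-inequlity-curl} with the $L^2$-stability \eqref{eq:l2-stability} to dominate $\sum_r c_r^2\|\bm{v}_r\|_{A_{\mathcal{D}}}^2$ by $(h^{-2}+\tau)\|\bm{w}_h\|_{\bm{L}^2(\Omega)}^2$ and hence by $\|\bm{u}_h\|_{A_{\mathcal{D}}}^2$ via \eqref{eq:Hitpmair-Xu-decomposition-estimate}. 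No gaps.
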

\begin{proof}
Let $\bm{w}_h \in \bm{V}_{h}(\mathcal{D},\Omega)$, $\bm{\varphi}_h \in  \bm{X}_h(\Omega)$, and $\bm{\phi}_h \in V_{h}(\mathcal{D}^-,\Omega)$ the components given by Proposition \ref{prop:Hitpmair-Xu decomposition}. By remaking that 
$$
\|\mathcal{D}^- \bm{\phi}_h \|_{A_{\mathcal{D}}}^2= \|\mathcal{D} \,\left( \mathcal{D}^- \bm{\phi}_h \right)\|_{\bm{L}^2(\Omega)}^2 + \tau \, \|\mathcal{D}^- \bm{\phi}_h \|_{\bm{L}^2(\Omega)}^2 =\tau \|\mathcal{D}^- \bm{\phi}_h \|_{\bm{L}^2(\Omega)}^2,
$$
we need only to estimate the first term in \eqref{eq:stable-Hitpmair-Xu-decomposition-estimate-2}. Using \eqref{eq:l2-stability} we obtain 

\begin{eqnarray}
\sum_{\substack{r \\ \bm{v}_{r} \in \mathcal{B}(\mathcal{D})}} c_{r}^2 \left\| \bm{v}_{r} \right\|^2_{A_{\mathcal{D}}} &=& \sum_{\substack{r \\ \bm{v}_{r} \in \mathcal{B}(\mathcal{D})}} c_{r}^2 \left\| \mathcal{D} \bm{v}_{r} \right\|^2_{L^2(\Omega)} + \tau \sum_{\substack{r \\ \bm{v}_{r} \in \mathcal{B}(\mathcal{D})}} c_{r}^2 \left\| \bm{v}_{r} \right\|^2_{L^2(\Omega)} \nonumber\\
& \lesssim & (h^{-2} + \tau)  \sum_{\substack{r \\ \bm{v}_{r} \in \mathcal{B}(\mathcal{D})}} c_{r}^2 \left\| \bm{v}_{r} \right\|^2_{L^2(\Omega)} \nonumber\\
& \approx & (h^{-2} + \tau) \left\| \sum_{\substack{r \\ \bm{v}_{r} \in \mathcal{B}(\mathcal{D})}} c_{r} \, \bm{v}_{r} \right\|^2_{\bm{L}^2(\Omega)} \nonumber\\
& = &   (h^{-2} + \tau) \|\bm{w}_h\|_{\bm{L}^2(\Omega)}, \nonumber
\end{eqnarray}
and we conclude the proof using \eqref{eq:Hitpmair-Xu-decomposition-estimate}.

\end{proof}

As a final step, we provide a slightly different decomposition for the case of $\mathcal{D} = div$. In fact using the splitting of Theorem \ref{prop:stable-Hitpmair-Xu-decomposition} in the case of the $div$ problem involves solving an $\bm{H}(\bm{curl}, \Omega)$ elliptic problem, which also has a large null space. To avoid this difficulty, we adopt the approach of \cite{hiptmair2007nodal} and we use both the decomposition presented in Theorem \ref{prop:stable-Hitpmair-Xu-decomposition} and the one outlined in Proposition \ref{prop:Hitpmair-Xu decomposition}. Specifically, we prove the following result:

\begin{corollary}[Hitpmair-Xu decomposition for $\bm{H}(div, \Omega)$]\label{prop:div-stable-Hitpmair-Xu-decomposition}
Let $\tau>0$. For each $\bm{u}_h \in \bm{V}_{h}(div,\Omega)$, there exit $\bm{w}_h \in \bm{V}_{h}(div,\Omega)$ $\bm{z}_h \in \bm{V}_{h}(\bm{curl},\Omega) $ and $(\bm{\varphi}_h, \bm{\psi}_h) \in  \bm{X}_h(\Omega)^2$  such that   
\begin{equation}\label{EEE1}
\bm{u}_h=\bm{w}_h+\Pi_{h}^{div} \bm{\varphi}_h + \bm{curl}\, \bm{z}_h + \bm{curl}\, \bm{\psi}_h .
\end{equation}
In addition, expanding the components $\bm{w}_h$ and $\bm{z}_h$ on the bases  $\mathcal{B}(div)$ and  $\mathcal{B}(\bm{curl})$, respectively, 
$$
\bm{w}_h =  \sum_{\substack{r \\ \bm{v}_{r} \in \mathcal{B}(div)}} c_{r} \, \bm{v}_{r}, \quad \bm{z}_h =  \sum_{\substack{r \\ \bm{v}_{r} \in \mathcal{B}(\bm{curl})}} d_{r} \, \bm{v}_{r},
$$
we have 
\begin{eqnarray}\label{EEE2} 
&& \sum_{\substack{r \\ \bm{v}_{r} \in \mathcal{B}(div)}} c_{r}^2 \left\| \bm{v}_{r} \right\|^2_{A_{div}} + \|\bm{\varphi}_h\|_{\bm{X}(\Omega)}^2 +\tau \|\bm{\varphi}_h\|_{\bm{L}^2(\Omega)}^2\\
&& {\color{white}-----} + \tau \sum_{\substack{r \\ \bm{v}_{r} \in \mathcal{B}(\bm{curl})}} d_{r}^2 \left\| \bm{curl} \, \bm{v}_{r}  \right\|_{\bm{L}^2(\Omega)}^2  + \tau  \left\| \bm{\psi}_h \right\|_{\bm{X}(\Omega)}^2 \lesssim \|\bm{u}_h\|_{A_{div}}^2, \nonumber 
\end{eqnarray}
\end{corollary}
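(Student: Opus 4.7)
The plan is to chain two of the already proved decompositions: the stable one of Theorem~\ref{prop:stable-Hitpmair-Xu-decomposition} applied to $\bm{u}_h\in\bm{V}_h(div,\Omega)$, followed by the semi-stable one of Proposition~\ref{prop:Hitpmair-Xu decomposition} applied to the vector potential produced in the first step. The reason this telescopes so cleanly is the identity $\bm{curl}\,\bm{grad}=0$: the $\bm{grad}$-potential term of the second decomposition is annihilated once $\bm{curl}$ is applied, leaving exactly the four terms of~\eqref{EEE1}.

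Concretely, applying Theorem~\ref{prop:stable-Hitpmair-Xu-decomposition} with $\mathcal{D}=div$ (so $\mathcal{D}^-=\bm{curl}$) to $\bm{u}_h$ yields
$$
\bm{u}_h \;=\; \bm{w}_h + \Pi_h^{div}\,\bm{\varphi}_h + \bm{curl}\,\bm{\zeta}_h,\qquad \bm{\zeta}_h\in\bm{V}_h(\bm{curl},\Omega),
$$
together with the stability bound \eqref{eq:stable-Hitpmair-Xu-decomposition-estimate-2}. This already controls $\bm{w}_h$ and $\bm{\varphi}_h$ exactly as needed in~\eqref{EEE2}, and it also delivers $\tau\|\bm{curl}\,\bm{\zeta}_h\|_{\bm{L}^2(\Omega)}^2\lesssim\|\bm{u}_h\|_{A_{div}}^2$, since $div\,\bm{curl}=0$ collapses the $A_{div}$-seminorm of $\bm{curl}\,\bm{\zeta}_h$ to its $\sqrt{\tau}\,\|\cdot\|_{\bm{L}^2}$-part. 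Next I apply Proposition~\ref{prop:Hitpmair-Xu decomposition} with $\mathcal{D}=\bm{curl}$ to $\bm{\zeta}_h$, obtaining
$$
\bm{\zeta}_h \;=\; \bm{z}_h + \Pi_h^{\bm{curl}}\bm{\psi}_h + \bm{grad}\,\bm{\phi}_h',
$$
with $\bm{z}_h\in\bm{V}_h(\bm{curl},\Omega)$, $\bm{\psi}_h\in\bm{X}_h(\Omega)$, $\bm{\phi}_h'\in V_h(\bm{grad},\Omega)$ and the estimate~\eqref{eq:Hitpmair-Xu-decomposition-estimate}. Taking $\bm{curl}$ kills the $\bm{grad}\,\bm{\phi}_h'$ piece, and Lemma~\ref{thm:sec3-lemma1}(ii)---equivalently, the commutativity of diagram~\eqref{pr-eq:de Rham-diagram}---identifies $\bm{curl}\,\Pi_h^{\bm{curl}}\bm{\psi}_h$ with $\bm{curl}\,\bm{\psi}_h$, producing the claimed form~\eqref{EEE1}.

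For the estimate~\eqref{EEE2}, the first three terms come straight from \eqref{eq:stable-Hitpmair-Xu-decomposition-estimate-2}. Chasing the proof of Proposition~\ref{prop:Hitpmair-Xu decomposition} (via Lemma~\ref{lem-stable-approximation-of-vect-grad} and Lemma~\ref{lem-semi-discrete-decomposition}) gives $\|\bm{\psi}_h\|_{\bm{X}(\Omega)}\lesssim\|\bm{curl}\,\bm{\zeta}_h\|_{\bm{L}^2(\Omega)}$ without any $\tau$ factor, so $\tau\|\bm{\psi}_h\|_{\bm{X}(\Omega)}^2\lesssim\tau\|\bm{curl}\,\bm{\zeta}_h\|^2\lesssim\|\bm{u}_h\|_{A_{div}}^2$. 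For the $\bm{z}_h$ term, the inverse inequality $\|\bm{curl}\,\bm{v}_r\|\lesssim h^{-1}\|\bm{v}_r\|$ combined with the $L^2$-stability of $\mathcal{B}(\bm{curl})$ (the corollary of Theorem~\ref{theroem-l2-stability}) gives $\sum_r d_r^2\|\bm{curl}\,\bm{v}_r\|^2\lesssim h^{-2}\|\bm{z}_h\|_{\bm{L}^2}^2$, and the $(h^{-2}+\tau)\|\bm{z}_h\|^2$-piece of~\eqref{eq:Hitpmair-Xu-decomposition-estimate} applied to $\bm{\zeta}_h$ then controls $\tau$ times this expression by $\tau\|\bm{\zeta}_h\|_{A_{\bm{curl}}}^2$.

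The main obstacle is closing the last link: Step~1 only controls $\tau\|\bm{curl}\,\bm{\zeta}_h\|_{\bm{L}^2}^2$, whereas the estimates above need the full $\|\bm{\zeta}_h\|_{A_{\bm{curl}}}^2=\|\bm{curl}\,\bm{\zeta}_h\|^2+\tau\|\bm{\zeta}_h\|^2$. Since the potential $\bm{\zeta}_h$ is determined only modulo $\bm{grad}\,V_h(\bm{grad},\Omega)=\ker(\bm{curl}|_{\bm{V}_h(\bm{curl},\Omega)})$, I would fix this freedom by selecting $\bm{\zeta}_h$ orthogonal to that kernel and invoke the discrete Friedrichs-type inequality $\|\bm{\zeta}_h\|_{\bm{L}^2}\lesssim\|\bm{curl}\,\bm{\zeta}_h\|_{\bm{L}^2}$ available for the isogeometric de Rham complex; together with the standing smallness $\tau\lesssim 1$, this absorbs the remaining $\tau\|\bm{\zeta}_h\|^2$ contribution and closes~\eqref{EEE2}.
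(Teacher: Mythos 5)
Your two-step architecture is exactly the paper's: Theorem \ref{prop:stable-Hitpmair-Xu-decomposition} with $\mathcal{D}=div$, then Proposition \ref{prop:Hitpmair-Xu decomposition} with $\mathcal{D}=\bm{curl}$ applied to the vector potential, with $\bm{curl}\,\bm{grad}=0$ and $\bm{curl}\,\Pi_h^{\bm{curl}}\bm{\psi}_h=\bm{curl}\,\bm{\psi}_h$ collapsing the result to \eqref{EEE1}; the treatment of the first three terms of \eqref{EEE2}, of $\bm{\psi}_h$, and the inverse-inequality-plus-$L^2$-stability bound $\sum_r d_r^2\|\bm{curl}\,\bm{v}_r\|^2\lesssim h^{-2}\|\bm{z}_h\|^2_{\bm{L}^2(\Omega)}$ all coincide with the paper's computation. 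The one divergence is your final paragraph: the ``main obstacle'' you describe is self-inflicted. You already observed that chasing the proof of Proposition \ref{prop:Hitpmair-Xu decomposition} yields $\|\bm{\psi}_h\|_{\bm{X}(\Omega)}\lesssim\|\bm{curl}\,\bm{\zeta}_h\|_{\bm{L}^2(\Omega)}$ with no $\tau$; the very same chase (estimate \eqref{pr-eq:semi-discrete-decomposition-1} in that proof) gives $h^{-1}\|\bm{z}_h\|_{\bm{L}^2(\Omega)}\lesssim\|\bm{curl}\,\bm{\zeta}_h\|_{\bm{L}^2(\Omega)}$ as well, so the full graph norm $\|\bm{\zeta}_h\|_{A_{\bm{curl}}}$ is never needed: multiplying $h^{-2}\|\bm{z}_h\|^2+\|\bm{\psi}_h\|^2_{\bm{X}(\Omega)}\lesssim\|\bm{curl}\,\bm{\zeta}_h\|^2_{\bm{L}^2(\Omega)}$ by $\tau$ and using $\tau\|\bm{curl}\,\bm{\zeta}_h\|^2_{\bm{L}^2(\Omega)}=\|\bm{curl}\,\bm{\zeta}_h\|^2_{A_{div}}\lesssim\|\bm{u}_h\|^2_{A_{div}}$ closes \eqref{EEE2} directly, which is precisely the paper's argument (its intermediate bound \eqref{eq:stable-Hitpmair-Xu-decomposition-estimate-5}). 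Your workaround --- gauge-fixing $\bm{\zeta}_h$ against $\ker(\bm{curl})$ and invoking a discrete Friedrichs inequality together with $\tau\lesssim 1$ --- is therefore unnecessary, and as written it is the weakest link of the proposal: the uniform discrete Friedrichs inequality is nowhere established in the paper, and the smallness assumption on $\tau$ degrades the claimed $\tau$-independence of the constant in \eqref{EEE2} (the corollary is stated for all $\tau>0$). Drop that paragraph, use the sharper intermediate estimate for $\bm{z}_h$ exactly as you did for $\bm{\psi}_h$, and the proof is complete and identical to the paper's.
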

 
\begin{proof}
Using Theorem \ref{prop:stable-Hitpmair-Xu-decomposition}, we can find  $\bm{w}_h \in \bm{V}_{h}(div,\Omega)$, $\bm{\varphi}_h \in  \bm{X}_h(\Omega)$ and $\bm{\phi}_h \in \bm{V}_{h}(\bm{curl},\Omega)$ such that 
\begin{equation*}
\bm{u}_h=\bm{w}_h+\Pi_{h}^{div} \bm{\varphi}_h + \bm{curl} \, \bm{\phi}_h,
\end{equation*}
and 
\begin{equation}\label{eq:stable-Hitpmair-Xu-decomposition-estimate-3}
\sum_{\substack{r \\ \bm{v}_{r} \in \mathcal{B}(div)}} c_{r}^2 \left\| \bm{v}_{r} \right\|^2_{A_{div}} + \|\bm{\varphi}_h\|_{\bm{X}(\Omega)}^2 + \tau \|\bm{\varphi}_h\|_{\bm{L}^2(\Omega)}^2 + \|\bm{curl}\, \phi_h \|_{A_{div}}^2 \lesssim \|\bm{u}_h\|_{A_{div}}^2.
\end{equation}
Using however Proposition \ref{prop:Hitpmair-Xu decomposition}, there exist 
$\bm{z}_h \in \bm{V}_{h}(\bm{curl},\Omega)$, $\bm{\psi}_h \in  \bm{X}_h(\Omega)$ and $\Psi_h \in V_{h}(\bm{grad},\Omega)$ such that 
\begin{equation*}
\bm{\phi}_h=\bm{z}_h+\Pi_{h}^{\bm{curl}} \bm{\psi}_h + \bm{grad} \, \Psi_h,
\end{equation*}
with estimate
\begin{equation}\label{eq:stable-Hitpmair-Xu-decomposition-estimate-5}
h^{-2} \|\bm{z}_h\|_{\bm{L}^2(\Omega)}^2 + \|\psi\|_{\bm{L}^2(\Omega)}^2 \lesssim \|\bm{curl} \, \bm{\phi}_h\|_{\bm{L}^2(\Omega)}^2.
\end{equation}
It follows that
\begin{eqnarray*}
\bm{u}_h &=& \bm{w}_h+\Pi_{h}^{div} \bm{\varphi}_h + \bm{curl} \, \bm{z}_h +  \bm{curl} \left(\Pi_{h}^{\bm{curl}} \bm{\psi}_h \right) + \bm{curl} \, \bm{grad} \, \Psi_h \\
&=& \bm{w}_h+\Pi_{h}^{div} \bm{\varphi}_h + \bm{curl} \, \bm{z}_h +  \bm{curl} \, \bm{\psi}_h, 
\end{eqnarray*}
where in the last equality we have used the fact that $\bm{curl} \left(\Pi_{h}^{\bm{curl}} \bm{\psi}_h \right) =  \bm{curl} \, \bm{\psi}_h$ and $\bm{curl} \, \bm{grad} \, \Psi_h =0$; this prove \eqref{EEE1}.

We now prove bound \eqref{EEE2}. Using estimate \eqref{eq:stable-Hitpmair-Xu-decomposition-estimate-3}, we obtain 
\begin{equation}\label{eq:stable-Hitpmair-Xu-decomposition-estimate-4}
\sum_{\substack{r \\ \bm{v}_{r} \in \mathcal{B}(div)}} c_{r}^2 \left\| \bm{v}_{r} \right\|^2_{A_{div}} + \|\bm{\varphi}_h\|_{\bm{X}(\Omega)}^2 + \tau \|\bm{\varphi}_h\|_{\bm{L}^2(\Omega)}^2 + \tau \|\bm{curl}\, \phi_h \|_{\bm{L}^2(\Omega)}^2 \lesssim \|\bm{u}_h\|_{A_{div}}^2.
\end{equation}
But, using \eqref{eq:stable-Hitpmair-Xu-decomposition-estimate-5}, we have
\begin{eqnarray*}
\sum_{\substack{r \\ \bm{v}_{r} \in \mathcal{B}(\bm{curl})}} d_{r}^2 \left\| \bm{curl} \, \bm{v}_{r}  \right\|_{\bm{L}^2(\Omega)}^2  + \left\| \bm{\psi}_h \right\|_{\bm{X}(\Omega)}^2 & \lesssim & h^{-2} \sum_{\substack{r \\ \bm{v}_{r} \in \mathcal{B}(\bm{curl})}} d_{r}^2 \left\|  \bm{v}_{r}  \right\|_{\bm{L}^2(\Omega)}^2  + \left\| \bm{\psi}_h \right\|_{\bm{X}(\Omega)}^2 \\
& \approx & h^{-2} \left\|\sum_{\substack{r \\ \bm{v}_{r} \in \mathcal{B}(\bm{curl})}} d_{r} \bm{v}_{r} \right\|_{\bm{L}^2(\Omega)}^2 + \left\| \bm{\psi}_h \right\|_{\bm{X}(\Omega)}^2 \\
& = & h^{-2} \|\bm{z}_h\|_{\bm{L}^2(\Omega)}^2 + \left\| \bm{\psi}_h \right\|_{\bm{X}(\Omega)}^2 \\
& \lesssim & \|\bm{curl} \, \bm{\phi}_h\|_{\bm{L}^2(\Omega)}^2, 
\end{eqnarray*}
and we conclude the proof combining this last estimate together with \eqref{eq:stable-Hitpmair-Xu-decomposition-estimate-4}.
\end{proof} 

\subsection{Auxiliary Space Preconditioners}\label{sec:construction-of-ASP}

We are now ready to apply the abstract ASP theory of Section \ref{subsec:Auxiliary-Space-Preconditioning-Method}. 

\subsubsection{ASP-preconditioner in the case $\mathcal{D} = \bm{curl}$} 
Following the same notations of Section \ref{subsec:Auxiliary-Space-Preconditioning-Method}, let us consider $V=\bm{V}_{h}(\bm{curl},\Omega)$ equipped with the bilinear form $a$ related to equation \eqref{introduction-variational-formulation}, namely $a(\bm{w}_h,\widetilde{\bm{w}_h})= \left(\bm{curl} \, \bm{w}_h,\bm{curl}\, \widetilde{\bm{w}_h} \right)_{\bm{L}^2(\Omega)} + \tau (\bm{w}_h,\widetilde{\bm{w}_h})_{\bm{L}^2(\Omega)}$ for $\bm{w}_h, \widetilde{\bm{w}_h} \in V$, and auxiliary spaces $W_1= \bm{X}_h(\Omega)$ and $W_2=V_{h}(\bm{grad},\Omega)$ equipped with the following inner products
\begin{equation*}
a_1(\bm{\varphi}_h,\widetilde{\bm{\varphi}_h})= (\bm{\varphi}_h,\widetilde{\bm{\varphi}_h})_{\bm{X}(\Omega)} + \tau (\bm{\varphi}_h,\widetilde{\bm{\varphi}_h})_{\bm{L}^2(\Omega)}, \quad \bm{\varphi}_h,\widetilde{\bm{\varphi}_h} \in W_1,
\end{equation*}
and
\begin{equation*}
a_2(\phi_h,\widetilde{\phi_h})= \tau \, (\bm{grad} \, \phi_h, \bm{grad} \, \widetilde{\phi_h})_{\bm{L}^2(\Omega)},  \quad \phi_h,\widetilde{\phi_h} \in W_2,
\end{equation*}
respectively. The corresponding transfer operators are $\pi_1 = \Pi_{h}^{\bm{curl}}\big|_{W_1}$ and $\pi_2=\bm{grad} \big|_{W_2}$. \\

Before we verify the validity of assumptions of Theorem \ref{th:ASP-lemma}, we transition to a matrix notation. So we write $\bm{H}$ for the matrix related to the restriction of $\bm{X}(\Omega)$ inner product to $\bm{X}_h(\Omega)$, that is the matrix representation of the bilinear form 
\begin{equation*}
(\bm{\varphi}_h,\widetilde{\bm{\varphi}_h}) \in \bm{X}_h(\Omega)\times \bm{X}_h(\Omega) \mapsto (\bm{\varphi}_h,\widetilde{\bm{\varphi}_h})_{\bm{X}(\Omega)}.
\end{equation*}
Similarly, we write $\bm{M}$ for the matrix related to the restriction of the $\bm{L}^2(\Omega)$ inner product to $\bm{X}_{h}(\Omega)$. Let $\bm{L}$ be the matrix related to the mapping  
\begin{equation*}
(\bm{\phi}_h,\widetilde{\bm{\phi}_h}) \in V_{h}(\bm{grad},\Omega) \times V_{h}(\bm{grad},\Omega)  \longmapsto \left(\bm{grad}\, \bm{\phi}_h, \bm{grad}\, \widetilde{\bm{\phi}_h} \right)_{\bm{L}^2(\Omega)}.
\end{equation*}
We also write $\bm{P}_{\bm{curl}}$ and $\bm{G}$ for matrices related to the transform operators $\pi_1$ and $\pi_2$ respectively while $\bm{S}_{\bm{curl}}$ further stands for the matrix related to the smoother. In the case of  Jacobi smoothing $\bm{S}_{\bm{curl}}$ is the matrix representation of the smoothing operator
\begin{equation}\label{eq:curl-jacobi-smoothing}
\bm{w}_h \in V \mapsto s( \bm{w}_h, \bm{w}_h) = \sum_{\substack{r \\ \bm{v}_{r} \in \mathcal{B}(\bm{curl})}} c_{r}^2 \, a(\bm{v}_{r}, \bm{v}_{r}), \quad \bm{w}_h =  \sum_{\substack{r \\ \bm{v}_{r} \in \mathcal{B}(\bm{curl})}} c_{r} \, \bm{v}_{r},
\end{equation}
and it coincides with the diagonal $\bm{D}_{\bm{A}_{\bm{curl}}}$ of $\bm{A}_{\bm{curl}}$ ($\bm{A}_{\bm{curl}}$ stands for the matrix representation of bilinear form $a$).

With these notations, a simple computation shows that ASP preconditioner  for problem \eqref{eq:curl-curl} reads 
\begin{equation}
\label{eq:asp-curl}
\bm{B}_{\bm{curl}} = \bm{S}_{\bm{curl}}^{-1} + \bm{P}_{\bm{curl}} \left(\bm{H} + \tau \bm{M}\right)^{-1} \bm{P}_{\bm{curl}}^T + \tau^{-1} \bm{G} \bm{L}^{-1} \bm{G}^T.
\end{equation}

\begin{remark}
In two dimensions, we have two distinct curl operators: the scalar curl operator, defined as  $curl \, \bm{u} = \frac{\partial u_1}{\partial x_2}- \frac{\partial u_2}{\partial x_1}$, and the vector $\bm{curl}$ operator, defined as $\bm{curl} \, u = \left(\frac{\partial u}{\partial x_2},-\frac{\partial u}{\partial x_1} \right)$. The present analysis refers to the scalar $curl$ operator. However, the vector $\bm{curl}$ operator is used in the preconditioning of the two-dimensional $\bm{H}(div, \Omega)$ problem addressed in the following subsection.
\end{remark}

The following fundamental result demonstrates the mesh-independence of the preconditioner $\bm{B}_{\bm{curl}}$, at least when a Jacobi smoothing scheme is used. This result is a direct consequence of Theorem \ref{prop:stable-Hitpmair-Xu-decomposition} and Theorem \ref{th:ASP-lemma}.

\begin{theorem}\label{mesh-independence-curl}
Let $\tau >0$ and suppose that the smoothing operator $s$ is given by \eqref{eq:curl-jacobi-smoothing}. Then, the spectral condition number $\kappa \left( \bm{B}_{\bm{curl}} \bm{A}_{\bm{curl}}\right)$ is bounded, with respect to $h$ and $\tau$.
\end{theorem}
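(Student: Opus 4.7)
The plan is to verify the three hypotheses of Theorem \ref{th:ASP-lemma} for the concrete choice $V = \bm{V}_h(\bm{curl}, \Omega)$ (endowed with $a$), auxiliary spaces $W_1 = \bm{X}_h(\Omega)$ and $W_2 = V_h(\bm{grad}, \Omega)$ (endowed with $a_1$ and $a_2$), transfer operators $\pi_1 = \Pi_h^{\bm{curl}}\big|_{W_1}$ and $\pi_2 = \bm{grad}\big|_{W_2}$, and smoother $s$ from \eqref{eq:curl-jacobi-smoothing}. Once the three constants are shown to be independent of both $h$ and $\tau$, the bound $\kappa(\bm{B}_{\bm{curl}} \bm{A}_{\bm{curl}}) \leq \eta(\gamma + \beta_1 + \beta_2)$ supplied by Theorem \ref{th:ASP-lemma} gives the claim.

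Assumption (iii) is essentially a translation of Theorem \ref{prop:stable-Hitpmair-Xu-decomposition}. Applied with $\mathcal{D} = \bm{curl}$ and $\mathcal{D}^- = \bm{grad}$, it produces the splitting $\bm{u}_h = \bm{w}_h + \Pi_h^{\bm{curl}}\bm{\varphi}_h + \bm{grad}\, \bm{\phi}_h$ together with the estimate \eqref{eq:stable-Hitpmair-Xu-decomposition-estimate-2}. Writing $\bm{w}_h = \sum_r c_r \bm{v}_r$, the coefficient sum on the left of \eqref{eq:stable-Hitpmair-Xu-decomposition-estimate-2} coincides with $s(\bm{w}_h, \bm{w}_h)$ by the very definition \eqref{eq:curl-jacobi-smoothing}; and since $\bm{curl}\,\bm{grad} = 0$ one has $a_2(\bm{\phi}_h, \bm{\phi}_h) = \tau \|\bm{grad}\,\bm{\phi}_h\|_{\bm{L}^2(\Omega)}^2 = \|\bm{grad}\,\bm{\phi}_h\|_{A_{\bm{curl}}}^2$. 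Hence \eqref{eq:stable-Hitpmair-Xu-decomposition-estimate-2} reads exactly $s(\bm{w}_h,\bm{w}_h) + a_1(\bm{\varphi}_h,\bm{\varphi}_h) + a_2(\bm{\phi}_h,\bm{\phi}_h) \lesssim \|\bm{u}_h\|_{A_{\bm{curl}}}^2$, proving (iii) with an $h$- and $\tau$-independent $\eta$.

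For (i), the bound on $\pi_2$ is free: $a(\bm{grad}\,\bm{\phi}_h, \bm{grad}\,\bm{\phi}_h) = \tau\|\bm{grad}\,\bm{\phi}_h\|_{\bm{L}^2(\Omega)}^2 = a_2(\bm{\phi}_h, \bm{\phi}_h)$, so $\beta_2 = 1$. For $\pi_1$ I would exploit the commuting diagram \eqref{pr-eq:de Rham-diagram} to write $\bm{curl}(\Pi_h^{\bm{curl}}\bm{\varphi}_h) = \Pi_h^{div}(\bm{curl}\,\bm{\varphi}_h)$, then invoke the $L^2$-stability of the quasi-interpolants $\Pi_h^{\bm{curl}}$ and $\Pi_h^{div}$ (inherited componentwise from the local stability of $\mathcal{P}_h$ and $\mathcal{Q}_h$ recalled in Subsection \ref{subsec-IGA}) to obtain $\|\bm{curl}(\Pi_h^{\bm{curl}}\bm{\varphi}_h)\|_{\bm{L}^2(\Omega)}^2 \lesssim \|\bm{curl}\,\bm{\varphi}_h\|_{\bm{L}^2(\Omega)}^2 \lesssim \|\bm{\varphi}_h\|_{\bm{X}(\Omega)}^2$ and $\|\Pi_h^{\bm{curl}}\bm{\varphi}_h\|_{\bm{L}^2(\Omega)}^2 \lesssim \|\bm{\varphi}_h\|_{\bm{L}^2(\Omega)}^2$. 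Weighting the second estimate by $\tau$ and summing gives $a(\pi_1\bm{\varphi}_h, \pi_1\bm{\varphi}_h) \lesssim a_1(\bm{\varphi}_h, \bm{\varphi}_h)$.

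The remaining hypothesis (ii), $a(\bm{v},\bm{v}) \leq \gamma\, s(\bm{v},\bm{v})$ for all $\bm{v} \in V$ with $\gamma$ independent of $h$ and $\tau$, is what I expect to be the main obstacle, since the smoother $s$ only captures the diagonal self-contributions whereas $a$ mixes all overlapping basis pairs. Expanding $\bm{v} = \sum_r c_r \bm{v}_r$ and $a(\bm{v},\bm{v}) = \sum_{r,r'} c_r c_{r'}\, a(\bm{v}_r, \bm{v}_{r'})$, only pairs $(r,r')$ with overlapping supports contribute, and for a tensor-product B-spline basis of fixed degree $p$ each $\bm{v}_r$ has only $O(1)$ neighbours. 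A Cauchy--Schwarz argument on these local sums, combined with the $L^2$-stability of $\mathcal{B}(\bm{curl})$ from Theorem \ref{theroem-l2-stability} and the inverse inequality \eqref{eq:inverse-inequlity-curl} to absorb the $\bm{curl}$ contribution, would yield $a(\bm{v},\bm{v}) \lesssim \sum_r c_r^2 \|\bm{v}_r\|_{A_{\bm{curl}}}^2 = s(\bm{v},\bm{v})$. With (i)--(iii) verified with $h$- and $\tau$-independent constants, Theorem \ref{th:ASP-lemma} immediately yields the claimed uniform bound on $\kappa(\bm{B}_{\bm{curl}} \bm{A}_{\bm{curl}})$.
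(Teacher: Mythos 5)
Your proposal is correct and follows essentially the same route as the paper: hypothesis (iii) is read off from Theorem \ref{prop:stable-Hitpmair-Xu-decomposition}, (i) follows from the $\bm{H}(\bm{curl},\Omega)$-stability of $\Pi_h^{\bm{curl}}$ together with $\bm{curl}\circ\bm{grad}=0$ (so $\beta_2=1$), and (ii) is the finite-overlap/Cauchy--Schwarz argument, which the paper organizes as a sum over B\'ezier elements with an overlap constant $K$ depending only on the spline degrees. The only cosmetic difference is in (ii): the paper needs neither the inverse inequality \eqref{eq:inverse-inequlity-curl} nor Theorem \ref{theroem-l2-stability} there, since the local Cauchy--Schwarz bound applied separately to the $\bm{curl}$ term and the $L^2$ term already gives $a(\bm{v},\bm{v})\le K\,s(\bm{v},\bm{v})$ directly.
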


\begin{proof}
We will verify the assumptions of Theorem \ref{th:ASP-lemma}. To do so, we use Theorem \ref{error_estimates} and the following estimate (found, for instance, in \cite{buffa2011isogeometric}):
$$
\|\Pi_h^{\bm{curl}}\bm{w}_h\|_{\bm{H}(\bm{curl}, \Omega)} \lesssim \|\bm{w}_h\|_{\bm{H}(\bm{curl}, \Omega)}, \quad w_h \in W_1.
$$
From this estimate, we can see that the first inequality in (i) holds with a constant $\beta_1$ that is independent of $h$. The second inequality in (i) holds with a constant $\beta_2=1$, which is a consequence of the relation $\bm{curl} \circ \bm{grad} = 0$. 

To prove the inequality in (ii), we express any $\bm{w}_h \in \bm{V}_h(\bm{curl}, \Omega)$ as 
$$
\bm{w}_h =  \sum_{\substack{\bm{r} \\ \bm{v}_{\bm{r}} \in \mathcal{B}(\bm{curl})}} c_{\bm{r}} \, \bm{v}_{\bm{r}}.
$$
We have
\begin{eqnarray*}
a(\bm{w}_h, \bm{w}_h) & = & \| \bm{curl} \, \bm{w}_h \|_{\bm{L}^2(\Omega)}^2 + \tau \| \bm{w}_h \|_{\bm{L}^2(\Omega)}^2 \\
& \lesssim & \left\| \sum_{\substack{\bm{r} \\ \bm{v}_{\bm{r}} \in \mathcal{B}(\bm{curl})}} c_{\bm{r}} \, \bm{curl} \, \bm{v}_{\bm{r}} \right\|_{\bm{L}^2(\Omega)}^2 + \tau \left\| \sum_{\substack{\bm{r} \\ \bm{v}_{\bm{r}} \in \mathcal{B}(\bm{curl})}} c_{\bm{r}} \, \bm{v}_{\bm{r}} \right\|_{\bm{L}^2(\Omega)}^2\\
& = & \sum_{\mathcal{Q} \in \mathcal{Q}_h} \left\| \sum_{k = 1}^K  c_{k} \, \bm{curl} \, \bm{v}_{k} \right\|_{\bm{L}^2(\Omega)}^2 + \tau \sum_{\mathcal{Q} \in \mathcal{Q}_h} \left\| \sum_{k = 1}^K  c_{k} \,  \bm{v}_{k} \right\|_{\bm{L}^2(\Omega)}^2 \\
& \leq & K \sum_{\mathcal{Q} \in \mathcal{Q}_h}  \sum_{k = 1}^K c_{k}^2 \| \bm{curl} \, \bm{v}_{k} \|_{\bm{L}^2(\Omega)}^2 + \tau K \sum_{\mathcal{Q} \in \mathcal{Q}_h}  \sum_{k = 1}^K c_{k}^2 \| \bm{v}_{k} \|_{\bm{L}^2(\Omega)}^2\\
& = & K \, s(\bm{w}_h, \bm{w}_h), 
\end{eqnarray*}
where $\mathcal{Q}_h$ denotes the parametric B\'ezier mesh, $Q$ a generic Bézier element and the constant $K$ is the number of basis functions whose support interacts with $Q$. The constant $K$ depends only on the degrees of the spline bases.

Finally, the last assumption (iii) follows from Theorem \ref{prop:stable-Hitpmair-Xu-decomposition}.

\end{proof}

\subsubsection{ASP-preconditioner in the case $\mathcal{D} = div$} 
To simplify the discussion, we will consider the two and three dimensional cases separately. In the two-dimensional setting, the de Rham diagram reduces to the following:
\begin{align*}
    \begin{array}{ccccc}
   H^1(\Omega) & \xrightarrow{\quad \bm{curl} \quad } & \bm{H}(div, \Omega) & \xrightarrow{\quad div \quad } & L^2(\Omega)\\
  \Pi_{h}^{\bm{grad}} \Bigg\downarrow &  & \Pi_{h}^{div} \Bigg\downarrow &  & \Pi_{h}^{L^2} \Bigg\downarrow \\
  V_{h}(\bm{grad},\Omega) & \xrightarrow{\quad \bm{curl} \quad } & \bm{V}_h(div, \Omega) & \xrightarrow{\quad div \quad } & V_h(L^2, \Omega)
  \end{array}
\end{align*}
Here, $\bm{curl}$ refers to the vector $\bm{curl}$ operator. Theorem \ref{prop:stable-Hitpmair-Xu-decomposition}, with $\mathcal{D} = div$, provides us with the starting point. We therefore choose $V = \bm{V}_h(div, \Omega)$ equipped with the bilinear form
\begin{equation}
\label{eq:bilinear-form-div}
(\bm{w}_h, \widetilde{\bm{w}_h}) \in V \times V \mapsto a(\bm{w}_h,\widetilde{\bm{w}_h})= \left(div \, \bm{w}_h, div\, \widetilde{\bm{w}_h} \right)_{L^2(\Omega)} + \tau (\bm{w}_h,\widetilde{\bm{w}_h})_{\bm{L}^2(\Omega)}.
\end{equation}
We choose the auxiliary spaces $W_1= \bm{X}_h(\Omega)$ and $W_2=V_{h}(\bm{grad},\Omega)$. $W_1$ and $W_2$ are equipped with the following inner products:
\begin{equation*}
a_1(\bm{\varphi}_h,\widetilde{\bm{\varphi}_h})= (\bm{\varphi}_h,\widetilde{\bm{\varphi}_h})_{\bm{X}(\Omega)} + \tau (\bm{\varphi}_h,\widetilde{\bm{\varphi}_h})_{\bm{L}^2(\Omega)}, \quad \bm{\varphi}_h,\widetilde{\bm{\varphi}_h} \in W_1,
\end{equation*}
and
\begin{equation*}
a_2(\phi_h,\widetilde{\phi_h})= \tau \, (\bm{curl} \, \phi_h, \bm{curl} \, \widetilde{\phi_h})_{\bm{L}^2(\Omega)},  \quad \phi_h,\widetilde{\phi_h} \in W_2,
\end{equation*}
respectively. We define the transfer operators as $\pi_1 = \Pi_{h}^{div}\big|_{W_1}$ and $\pi_2=\bm{curl} \big|_{W_2}$. Let $\bm{P}_{div}$ and $\bm{R}$ be the matrices related to the transfer operators $\pi_1$ and $\pi_2$, respectively, and let $\bm{S}_{div}$ be the matrix representation of the smoother. The ASP preconditioner for problem \eqref{eq:div-div} in the two-dimensional setting can be expressed as
\begin{equation}
\label{eq:asp-2d-div}
\bm{B}_{div} = \bm{S}_{div}^{-1} + \bm{P}_{div} \left(\bm{H} + \tau \bm{M}\right)^{-1} \bm{P}_{div}^T + \tau^{-1} \bm{R} \bm{L}^{-1} \bm{R}^T,
\end{equation} 
where $\bm{L}$ and $\bm{M}$ are the matrices defined in the case $\mathcal{D} = \bm{curl}$.

In the three-dimensional case, Corollary \ref{prop:div-stable-Hitpmair-Xu-decomposition} is used as the basis for constructing the preconditioner. Similar to the two-dimensional case, we have $V = \bm{V}_h(div, \Omega)$ equipped with the bilinear form \eqref{eq:bilinear-form-div}. We choose the transform operators as follows:
\begin{enumerate}
\item[1.] $W_1 = \bm{X}_h(\Omega)$ with inner product
$$
a_1(\bm{\varphi}_h, \widetilde{\bm{\varphi}_h}) = (\bm{\varphi}_h,\widetilde{\bm{\varphi}_h})_{\bm{X}(\Omega)} + \tau (\bm{\varphi}_h,\widetilde{\bm{\varphi}_h})_{\bm{L}^2(\Omega)}, \quad \bm{\varphi}_h,  \widetilde{\bm{\varphi}_h} \in W_1.
$$
\bigskip

\item[2.] $W_2 = \bm{V}_h(\bm{curl}, \Omega)$  equipped with inner product
\begin{equation*}\label{eq:jacobi-curl}
a_2(\bm{z}_h, \widetilde{\bm{z}_h}) = \tau \sum_{\substack{r \\ \bm{v}_{r} \in \mathcal{B}(\bm{curl})}} d_{r} \, \tilde{d}_{r} \left\|\bm{curl}\, \bm{v}_r \right\|_{\bm{L}^2(\Omega)}^2, \quad \bm{z}_h,  \widetilde{\bm{z}_h} \in W_2,
\end{equation*}
with 
$$
\bm{z}_h =  \sum_{\substack{\bm{r} \\ \bm{v}_{\bm{r}} \in \mathcal{B}(\bm{curl})}} d_{\bm{r}} \, \bm{v}_{\bm{r}}, \quad \widetilde{\bm{z}_h} =  \sum_{\substack{\bm{r} \\ \bm{v}_{\bm{r}} \in \mathcal{B}(\bm{curl})}} \tilde{d}_{\bm{r}} \, \bm{v}_{\bm{r}}.
$$
\bigskip

\item[3.] $W_3 = \bm{X}_h(\Omega)$ with inner product
$$
a_3(\bm{\psi}_h, \widetilde{\bm{\psi}_h}) = \tau (\bm{\psi}_h,\widetilde{\bm{\psi}_h})_{\bm{X}(\Omega)}, \quad \bm{\psi}_h,  \widetilde{\bm{\psi}_h} \in W_3.
$$
\end{enumerate}

The corresponding transfer operators are $\pi_1 = \Pi_{h}^{div}\big|_{W_1}$, $\pi_2=\bm{curl} \big|_{W_2}$ and $\pi_3 = \bm{curl} \big|_{W_3}$.

In matrix notation, the bilinear form $\tau^{-1} a_2$ is represented by the matrix $\bm{D}_{\bm{curl}}$, it coincides  with the diagonal of the matrix $\bm{Q}_{\bm{curl}} = (\bm{Q}_{\bm{r},\bm{q}})$ defined by
\begin{equation}
\label{eq:div-second-smoother}
\bm{Q}_{\bm{r},\bm{q}}  = \int_{\Omega} \bm{curl}\, \bm{v}_{\bm{q}} \cdot  \bm{curl}\, \bm{v}_{\bm{r}}, \quad   \bm{v}_{\bm{r}}, \bm{v}_{\bm{q}} \in \mathcal{B}(\bm{curl}).
\end{equation}

The matrix related to projection $\pi_2$ is denoted by $\bm{C}$. A straightforward calculation yields that
\begin{equation}
\label{eq:asp-div}
\begin{split}
\bm{B}_{div} = \bm{S}_{div}^{-1} & + \bm{P}_{div} \left(\bm{H} + \tau \bm{M}\right)^{-1} \bm{P}_{div}^T + \tau^{-1} \bm{C} \bm{D}_{\bm{curl}}^{-1} \bm{C}^T\\
 & + \tau^{-1} \bm{C} \bm{P}_{\bm{curl}} \bm{H}^{-1} \bm{P}_{\bm{curl}}^T \bm{C}^T,
\end{split}
\end{equation}
where $\bm{S}_{div}$ represents the smoother in matrix form.

\begin{theorem}
Suppose that the smoothing operator $s$ is given by the Jacobi relaxation scheme and let $\tau >0$. Then, the spectral condition number $\kappa \left( \bm{B}_{div} \bm{A}_{div}\right)$ is bounded with respect to $h$ and $\tau$.
\end{theorem}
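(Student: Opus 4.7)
The plan is to apply Theorem \ref{th:ASP-lemma} with the setup $V = \bm{V}_h(div,\Omega)$ endowed with $a$ from \eqref{eq:bilinear-form-div}, and the three auxiliary pairs $(W_i, a_i)$ and transfer operators $\pi_i$ specified above, so that the resulting preconditioner coincides with $\bm{B}_{div}$ in \eqref{eq:asp-div}. I then need to verify the three assumptions (i)-(iii); each one mirrors the corresponding step in the proof of Theorem \ref{mesh-independence-curl}, but adapted to the richer three-auxiliary-space splitting.

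For (iii), I would invoke Corollary \ref{prop:div-stable-Hitpmair-Xu-decomposition} directly: the decomposition \eqref{EEE1} is precisely the required stable splitting, and each term on the left-hand side of \eqref{EEE2} matches exactly one of $s(\bm{w}_h,\bm{w}_h)$, $a_1(\bm{\varphi}_h,\bm{\varphi}_h)$, $a_2(\bm{z}_h,\bm{z}_h)$, or $a_3(\bm{\psi}_h,\bm{\psi}_h)$, with $\eta$ independent of $h$ and $\tau$. For (ii), with $s$ the Jacobi smoother in the basis $\mathcal{B}(div)$, I would expand any $\bm{w}_h = \sum_{r} c_{r} \bm{v}_{r}$, group summands by the Bézier element they overlap, and use the bounded overlap $K$ of tensor-product spline supports together with a Cauchy-Schwarz-type bound on each element for both the $div$ and the $L^2$ contributions. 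This yields $a(\bm{w}_h,\bm{w}_h) \leq K\, s(\bm{w}_h,\bm{w}_h)$ with $K$ depending only on the polynomial degrees, exactly as in the curl proof.

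For (i), the three transfer operators are treated separately. For $\pi_1 = \Pi_{h}^{div}|_{W_1}$, the commuting diagram \eqref{pr-eq:de Rham-diagram} reduces $div\,\Pi_{h}^{div}\bm{\varphi}_h$ to $\Pi_{h}^{L^2}(div\,\bm{\varphi}_h)$, and combining the $L^2$-stability of $\Pi_{h}^{L^2}$ with the $\bm{L}^2$-stability of $\Pi_{h}^{div}$ (both standard in the IgA framework of \cite{buffa2011isogeometric}) gives $a(\pi_1\bm{\varphi}_h,\pi_1\bm{\varphi}_h) \lesssim a_1(\bm{\varphi}_h,\bm{\varphi}_h)$. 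For $\pi_2$ and $\pi_3$, the identity $div \circ \bm{curl} = 0$ kills the $div$-term in $a$, leaving $a(\bm{curl}\,\bm{z}_h,\bm{curl}\,\bm{z}_h) = \tau\|\bm{curl}\,\bm{z}_h\|_{\bm{L}^2(\Omega)}^2$. For $\pi_3$ this is immediately bounded by $a_3(\bm{\psi}_h,\bm{\psi}_h)$ using $\|\bm{curl}\,\bm{\psi}_h\|_{\bm{L}^2(\Omega)} \lesssim \|\bm{\psi}_h\|_{\bm{X}(\Omega)}$. For $\pi_2$, I would reuse the bounded-overlap argument of (ii) applied to $\bm{z}_h = \sum_{r} d_{r} \bm{v}_{r}$ in $\mathcal{B}(\bm{curl})$, obtaining $\tau\|\bm{curl}\,\bm{z}_h\|_{\bm{L}^2(\Omega)}^2 \lesssim \tau \sum_{r} d_{r}^2 \|\bm{curl}\,\bm{v}_{r}\|_{\bm{L}^2(\Omega)}^2 = a_2(\bm{z}_h,\bm{z}_h)$.

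The hard part is the continuity bound for $\pi_2$: because $a_2$ is modelled on the diagonal $\bm{D}_{\bm{curl}}$ rather than on the full $\bm{H}(\bm{curl})$-inner product, the clean quasi-interpolant stability route used for $\pi_1$ in Theorem \ref{mesh-independence-curl} is not directly available. The bounded-overlap/Jacobi argument repairs this, but crucially relies on the locally quasi-uniform tensor-product structure of the mesh and on the $L^2$-stability of the Curry-Schoenberg basis supplied by Theorem \ref{theroem-l2-stability}. Once all three assumptions are verified, Theorem \ref{th:ASP-lemma} yields $\kappa(\bm{B}_{div}\bm{A}_{div}) \leq \eta\bigl(\gamma + \beta_1 + \beta_2 + \beta_3\bigr)$ with all constants independent of $h$ and $\tau$.
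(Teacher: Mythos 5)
Your proposal is correct and follows exactly the route the paper intends: the paper omits this proof, stating only that it mirrors Theorem \ref{mesh-independence-curl}, and your argument supplies precisely those details --- assumption (iii) from Corollary \ref{prop:div-stable-Hitpmair-Xu-decomposition}, assumption (ii) from the bounded-overlap/Jacobi estimate, and assumption (i) by treating $\pi_1$ via the stability of $\Pi_h^{div}$, $\pi_3$ via $div\circ\bm{curl}=0$, and $\pi_2$ via the $L^2$-stability of the spline bases. No gaps; this is the same approach, carried out in full.
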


\begin{proof}
The proof follows a similar approach to that of Theorem \ref{mesh-independence-curl} and is therefore omitted.
\end{proof}

\section{Numerical Results}\label{sec:numerical-results}

\begin{table}[H]

\begin{adjustbox}{width=1.\textwidth}
\begin{subtable}{1.\textwidth} 
    \centering 
    \hspace*{-2. cm}
\begin{tabular}{c p{0.cm} p{1.5cm} p{1.5cm} p{1.5cm} p{1.5cm}  c p{1.5cm} p{1.5cm} p{1.5cm} p{1.5cm}}
\hline
 &  & \multicolumn{4}{l}{$p=1$} &  & \multicolumn{4}{l}{$p=2$} \\ \cline{3-6} \cline{8-11} 
\diaghead{taun-----}{$\tau$}{$n$} &  & $8$   & $16$   & $32$   & $64$    & & $8$   & $16$   & $32$   & $64$    \\ \hline
$10^{-4}$    & & $1.37e+07$ & $5.97e+07$ & $2.44e+08$ & $9.81e+08$ & & $4.01e+07$ & $1.65e+08$ & $6.72e+08$ & $2.70e+09$ \\
$10^{-3}$    & & $1.37e+06$ & $5.97e+06$ & $2.44e+07$ & $9.81e+07$ & & $4.01e+06$ & $1.65e+07$ & $6.72e+07$ & $2.70e+08$ \\
$10^{-2}$    & & $1.37e+05$ & $5.97e+05$ & $2.44e+06$ & $9.81e+06$ & & $4.01e+05$ & $1.65e+06$ & $6.72e+06$ & $2.70e+07$ \\
$10^{-1}$    & & $1.37e+04$ & $5.97e+04$ & $2.44e+05$ & $9.81e+05$ & & $4.01e+04$ & $1.66e+05$ & $6.72e+05$ & $2.70e+06$ \\
$1$          & & $1.37e+03$ & $5.97e+03$ & $2.44e+04$ & $9.81e+04$ & & $4.02e+03$ & $1.66e+04$ & $6.72e+04$ & $2.70e+05$ \\
$10^{1}$     & & $1.38e+02$ & $5.98e+02$ & $2.44e+03$ & $9.81e+03$ & & $4.13e+02$ & $1.67e+03$ & $6.73e+03$ & $2.70e+04$ \\
$10^{2}$     & & $1.47e+01$ & $6.07e+01$ & $2.45e+02$ & $9.82e+02$ & & $5.46e+01$ & $1.78e+02$ & $6.84e+02$ & $2.72e+03$ \\
$10^{3}$     & & $2.77e+00$ & $6.97e+00$ & $2.54e+01$ & $9.91e+01$ & & $3.10e+01$ & $3.53e+01$ & $8.12e+01$ & $2.83e+02$ \\
$10^{4}$     & & $2.72e+00$ & $2.93e+00$ & $3.44e+00$ & $1.08e+01$ & & $3.08e+01$ & $3.23e+01$ & $3.29e+01$ & $4.36e+01$ 
\end{tabular}
\end{subtable}
\end{adjustbox}

\vspace{.5cm}

\begin{adjustbox}{width=1.\textwidth}
\begin{subtable}{\textwidth} 
    \centering 
    \hspace*{-2. cm}
\begin{tabular}{c c p{1.5cm} p{1.5cm} p{1.5cm} p{1.5cm}  c p{1.5cm} p{1.5cm} p{1.5cm} p{1.5cm}}
\hline
 &  & \multicolumn{4}{l}{$p=3$} &  & \multicolumn{4}{l}{$p=4$} \\ \cline{3-6} \cline{8-11} 
\diaghead{taun-----}{$\tau$}{$n$} &  & $8$   & $16$   & $32$   & $64$    & & $8$   & $16$   & $32$   & $64$    \\ \hline
$10^{-4}$    & & $4.40e+08$ & $1.71e+09$ & $6.74e+09$ & $2.68e+10$ & & $4.98e+09$ & $1.77e+10$ & $6.77e+10$ & $2.68e+11$ \\
$10^{-3}$    & & $4.40e+07$ & $1.71e+08$ & $6.74e+08$ & $2.68e+09$ & & $4.98e+08$ & $1.77e+09$ & $6.77e+09$ & $2.68e+10$ \\
$10^{-2}$    & & $4.40e+06$ & $1.71e+07$ & $6.74e+07$ & $2.68e+08$ & & $4.98e+07$ & $1.77e+08$ & $6.77e+08$ & $2.68e+09$ \\
$10^{-1}$    & & $4.40e+05$ & $1.71e+06$ & $6.74e+06$ & $2.68e+07$ & & $4.98e+06$ & $1.77e+07$ & $6.77e+07$ & $2.68e+08$ \\
$1$          & & $4.41e+04$ & $1.71e+05$ & $6.74e+05$ & $2.68e+06$ & & $4.99e+05$ & $1.77e+06$ & $6.77e+06$ & $2.68e+07$ \\
$10^{1}$     & & $4.48e+03$ & $1.71e+04$ & $6.75e+04$ & $2.68e+05$ & & $5.04e+04$ & $1.77e+05$ & $6.77e+05$ & $2.68e+06$ \\
$10^{2}$     & & $5.31e+02$ & $1.79e+03$ & $6.82e+03$ & $2.69e+04$ & & $5.65e+03$ & $1.82e+04$ & $6.82e+04$ & $2.69e+05$ \\
$10^{3}$     & & $2.95e+02$ & $2.95e+02$ & $7.56e+02$ & $2.76e+03$ & & $2.73e+03$ & $2.52e+03$ & $7.31e+03$ & $2.73e+04$ \\
$10^{4}$     & & $3.04e+02$ & $2.97e+02$ & $2.90e+02$ & $3.62e+02$ & & $2.90e+03$ & $2.61e+03$ & $2.44e+03$ & $3.26e+03$  
\end{tabular}
\end{subtable}
\end{adjustbox}

\vspace{.5cm}

\begin{adjustbox}{width=1.\textwidth}
\begin{subtable}{1.\textwidth} 
    \centering 
    \hspace*{-2. cm}
\begin{tabular}{c c p{1.5cm} p{1.5cm} p{1.5cm} p{1.5cm}  c p{1.5cm} p{1.5cm} p{1.5cm} p{1.5cm}}
\hline
 &  & \multicolumn{4}{l}{$p=5$} &  & \multicolumn{4}{l}{$p=6$} \\ \cline{3-6} \cline{8-11} 
\diaghead{taun-----}{$\tau$}{$n$} &  & $8$   & $16$   & $32$   & $64$    & & $8$   & $16$   & $32$   & $64$    \\ \hline
$10^{-4}$    & & $5.75e+10$ & $1.79e+11$ & $6.63e+11$ & $2.62e+12$ & & $6.98e+11$ & $1.80e+12$ & $6.33e+12$ & $2.49e+13$\\
$10^{-3}$    & & $5.75e+09$ & $1.79e+10$ & $6.63e+10$ & $2.62e+11$ & & $6.98e+10$ & $1.80e+11$ & $6.33e+11$ & $2.49e+12$ \\
$10^{-2}$    & & $5.75e+08$ & $1.79e+09$ & $6.63e+09$ & $2.62e+10$ & & $6.98e+09$ & $1.80e+10$ & $6.33e+10$ & $2.49e+11$ \\
$10^{-1}$    & & $5.75e+07$ & $1.79e+08$ & $6.63e+08$ & $2.62e+09$ & & $6.98e+08$ & $1.80e+09$ & $6.33e+09$ & $2.49e+10$ \\
$1$          & & $5.76e+06$ & $1.80e+07$ & $6.63e+07$ & $2.62e+08$ & & $6.99e+07$ & $1.80e+08$ & $6.33e+08$ & $2.49e+09$ \\
$10^{1}$     & & $5.80e+05$ & $1.80e+06$ & $6.63e+06$ & $2.62e+07$ & & $7.03e+06$ & $1.80e+07$ & $6.34e+07$ & $2.49e+08$ \\
$10^{2}$     & & $6.29e+04$ & $1.83e+05$ & $6.66e+05$ & $2.62e+06$ & & $7.46e+05$ & $1.83e+06$ & $6.36e+06$ & $2.49e+07$ \\
$10^{3}$     & & $2.59e+04$ & $2.25e+04$ & $7.00e+04$ & $2.65e+05$ & & $2.59e+05$ & $2.12e+05$ & $6.60e+05$ & $2.52e+06$ \\
$10^{4}$     & & $2.78e+04$ & $2.24e+04$ & $2.03e+04$ & $3.01e+04$ & & $2.80e+05$ & $1.93e+05$ & $1.67e+05$ & $2.76e+05$ 
\end{tabular}
\end{subtable}
\end{adjustbox}
\caption{The $2$-$d$ unpreconditioned $\bm{H}_0(\bm{curl}, \Omega)$: condition number $\kappa_2 \left( \bm{A}_{\bm{curl}} \right)$.}  
  \label{tab:curl-cn-none}  
\end{table}

The computational domain is defined as the unit square $\Omega = (0,1)^2$ subdivided into $n \times n$ sub-domains ($n \in \mathbb{N}^*$). First, we compute the condition number and then track the total number of iterations required for convergence of the Conjugate Gradient (\CG) method for different values of $n$, $\tau$, and $p$. We use $\bm{u}^\mathcal{D}$ ($\mathcal{D}= \bm{curl}$ or $\mathcal{D} = div$) to denote the solution of the linear system, \textit{i.e.}, $\bm{A}_\mathcal{D} \bm{u}^\mathcal{D} = \bm{b}$, where $\bm{b}$ represents the IgA discretization of the right-hand side function $\bm{f}$. In all experiments, we use the stopping criterion of
\begin{equation}\label{eq:stopping-criterion}
\frac{\|\bm{A}_\mathcal{D}\bm{u}^\mathcal{D}-\bm{b} \|_2}{\|\bm{b}\|_2} \leq 10^{-6}, 
\end{equation}
and the initial guess is always chosen to be the zero vector. 


\begin{table}[H]

\begin{adjustbox}{width=1.\textwidth}
\begin{subtable}{1.\textwidth} 
    \centering 
    \hspace*{-2. cm}
\begin{tabular}{c p{0.cm} p{1.cm} p{1.7cm} p{1.7cm} p{1.7cm}  c p{1.7cm} p{1.7cm} p{1.7cm} p{1.7cm}}
\hline
 &  & \multicolumn{4}{l}{$p=1$} &  & \multicolumn{4}{l}{$p=2$} \\ \cline{3-6} \cline{8-11} 
\diaghead{taun-----}{$\tau$}{$n$} &  & $8$   & $16$   & $32$   & $64$    & & $8$   & $16$   & $32$   & $64$    \\ \hline
$10^{-4}$    & & $63$  & $110$  & $186$  & $253$  & & $115$ & $287$  & $588$  & $1120$ \\
$10^{-3}$    & & $51$  & $99$   & $158$  & $185$  & & $86$  & $256$  & $510$  & $927$ \\
$10^{-2}$    & & $46$  & $82$   & $117$  & $160$  & & $72$  & $197$  & $426$  & $743$ \\
$10^{-1}$    & & $34$  & $44$   & $88$   & $149$  & & $55$  & $167$  & $317$  & $586$ \\
$1$          & & $18$  & $41$   & $80$   & $134$  & & $51$  & $123$  & $230$  & $435$ \\
$10^{1}$     & & $15$  & $36$   & $67$   & $115$  & & $34$  & $67$   & $120$  & $240$ \\
$10^{2}$     & & $12$  & $20$   & $34$   & $58$   & & $22$  & $37$   & $70$   & $125$  \\
$10^{3}$     & & $6$   & $5$    & $12$   & $28$   & & $19$  & $19$   & $23$   & $37$ \\
$10^{4}$     & & $5$   & $7$    & $4$    & $6$    & & $19$  & $21$   & $18$   & $16$ 
\end{tabular}
\end{subtable}
\end{adjustbox}

\vspace{.5cm}

\begin{adjustbox}{width=1.\textwidth}
\begin{subtable}{\textwidth} 
    \centering 
    \hspace*{-2. cm}
\begin{tabular}{c p{0.cm} p{1.cm} p{1.7cm} p{1.7cm} p{1.7cm}  c p{1.7cm} p{1.7cm} p{1.7cm} p{1.7cm}}
\hline
 &  & \multicolumn{4}{l}{$p=3$} &  & \multicolumn{4}{l}{$p=4$} \\ \cline{3-6} \cline{8-11} 
\diaghead{taun-----}{$\tau$}{$n$} &  & $8$   & $16$   & $32$   & $64$    & & $8$   & $16$   & $32$   & $64$    \\ \hline
$10^{-4}$    & & $323$ & $666$  & $1050$ & $1896$ & & $616$ & $1771$ & $2296$ & $-$ \\
$10^{-3}$    & & $272$ & $604$  & $861$  & $1545$ & & $437$ & $1394$ & $1878$ & $2931$ \\
$10^{-2}$    & & $216$ & $427$  & $697$  & $1133$ & & $329$ & $1032$ & $1518$ & $2404$ \\
$10^{-1}$    & & $175$ & $346$  & $558$  & $963$  & & $229$ & $794$  & $1143$ & $1849$ \\
$1$          & & $128$ & $275$  & $390$  & $685$  & & $150$ & $506$  & $785$  & $1217$ \\
$10^{1}$     & & $72$  & $147$  & $236$  & $428$  & & $82$  & $241$  & $400$  & $701$ \\
$10^{2}$     & & $35$  & $53$   & $97$   & $166$  & & $43$  & $73$   & $135$  & $237$ \\
$10^{3}$     & & $30$  & $33$   & $34$   & $53$   & & $32$  & $38$   & $47$   & $77$ \\
$10^{4}$     & & $32$  & $33$   & $26$   & $21$   & & $46$  & $44$   & $38$   & $33$ 
\end{tabular}
\end{subtable}
\end{adjustbox}

\vspace{.5cm}

\begin{adjustbox}{width=1.\textwidth}
\begin{subtable}{1.\textwidth} 
    \centering 
    \hspace*{-2. cm}
\begin{tabular}{c p{0.cm} p{1.cm} p{1.7cm} p{1.7cm} p{1.7cm}  c p{1.7cm} p{1.7cm} p{1.7cm} p{1.7cm}}
\hline
 &  & \multicolumn{4}{l}{$p=5$} &  & \multicolumn{4}{l}{$p=6$} \\ \cline{3-6} \cline{8-11} 
\diaghead{taun-----}{$\tau$}{$n$} &  & $8$   & $16$   & $32$   & $64$    & & $8$   & $16$   & $32$   & $64$    \\ \hline
$10^{-4}$    & & $1567$ & $-$ & $-$ & $-$ & & $2151$ & $-$ & $-$ & $-$ \\
$10^{-3}$    & & $1258$ & $2816$ & $-$ & $-$ & & $1565$ & $-$ & $-$ & $-$ \\
$10^{-2}$    & & $927$  & $2214$ & $-$ & $-$ & & $1029$ & $-$ & $-$ & $-$ \\
$10^{-1}$    & & $606$  & $1417$ & $-$ & $-$ & & $621$  & $2368$ & $-$ & $-$ \\
$1$          & & $307$  & $731$  & $1587$ & $2795$ &  & $292$  & $988$  & $2377$ & $-$ \\
$10^{1}$     & & $125$  & $286$  & $613$  & $1141$ & & $129$  & $355$  & $839$  & $1631$ \\
$10^{2}$     & & $55$   & $98$   & $173$  & $362$  & & $59$   & $138$  & $229$  & $495$ \\
$10^{3}$     & & $45$   & $52$   & $66$   & $115$  &  & $54$   & $73$   & $84$   & $165$ \\
$10^{4}$     & & $61$   & $56$   & $48$   & $50$ & & $64$   & $69$   & $64$   & $75$ 
\end{tabular}
\end{subtable}
\end{adjustbox}
\caption{The $2$-$d$ unpreconditioned $\bm{H}_0(\bm{curl}, \Omega)$: \CG  iterations. Exact solution is given by \eqref{eq:curl-exact-sol-1}. '$-$' means that \CG reaches the maximum number of iterations (set to $3000$) without convergence.}  
  \label{tab:curl-cgn-none}  
\end{table}


In this section, some sample simulations are developed to test the strategy proposed in this paper in view of further applications. The simulations are performed in two and three spatial dimensions, which are discussed in two separate subsections. The first subsection focuses on the two-dimensional case, while the second subsection is dedicated to the three-dimensional case.

\subsection{Two dimensional tests}

\begin{table}[H] 
\begin{adjustbox}{width=\textwidth}
  \begin{subtable}{0.6\textwidth} 
    \subcaption{$\bm{H}_0(\bm{curl}, \Omega)$ problem.}
    \centering
    \begin{tabular}{|c|c|c|c|}
      \hline
      $\tau$ & \CG Iter & Res. Error & $l^2$ Error\\ 
      \hline
      \hline
      $10^{-7}$ 	& $42$     & $1.22e-06$ & $2.29e-01$    \\
 	  $10^{-6}$ 	& $42$     & $1.22e-06$ & $2.29e-01$    \\
 	  $10^{-5}$ 	& $42$     & $1.36e-06$ & $2.29e-01$    \\
      $10^{-4}$ 	& $68$     & $9.86e-07$ & $1.54e-02$    \\
      $10^{-3}$ 	& $87$     & $1.83e-06$ & $7.78e-03$    \\
      $10^{-2}$ 	& $206$    & $1.22e-06$ & $7.16e-04$    \\
      $10^{-1}$ 	& $228$    & $1.41e-06$ & $1.16e-04$    \\
      $1$     		& $245$    & $1.32e-06$ & $7.87e-06$   	\\   
      \hline
    \end{tabular}
  \end{subtable}%
  \begin{subtable}{0.6\textwidth} 
    \subcaption{$\bm{H}_0(div, \Omega)$ problem.}
    \centering
    \begin{tabular}{|c|c|c|c|c|}
      \hline
      $n$ & \CG Iter & Res. Error & $l^2$ Error\\ 
      \hline
      \hline
   	  $10^{-7}$ 	& $42$     & $1.23e-06$ & $2.29e-01$    \\
 	  $10^{-6}$ 	& $42$     & $1.23e-06$ & $2.29e-01$    \\
 	  $10^{-5}$ 	& $42$     & $1.36e-06$ & $2.29e-01$    \\
      $10^{-4}$ 	& $68$     & $9.86e-07$ & $1.54e-02$    \\
      $10^{-3}$ 	& $89$     & $1.13e-06$ & $7.78e-03$    \\
      $10^{-2}$ 	& $207$    & $1.08e-06$ & $7.16e-04$    \\
      $10^{-1}$ 	& $249$    & $1.28e-06$ & $6.16e-05$    \\
      $1$     		& $245$    & $1.46e-06$ & $7.74e-06$   	\\  
      \hline
    \end{tabular}
  \end{subtable}
  \end{adjustbox}
  \caption{$2$-$d$ unpreconditioned problem: \CG iterations, residual and $l^2$ approximation errors. Exact solutions are given by \eqref{eq:curl-exact-sol-2}--\eqref{eq:div-exact-sol-2}. Parameter values are set to $n=32$, $p = 3$.}
  \label{tab:cgn-res-err}
\end{table}

In Subsection \ref{sec:none}, we investigate the unpreconditioned system, which allows us to evaluate the importance of the ASP preconditioner by comparing the obtained results with those of Subsection \ref{sec:jacobi}. In that subsection, we develop numerical tests related to the auxiliary space preconditioning method using both Jacobi and Gauss-Seidel smoothing schemes. Later in Subsection \ref{sec:glt}, we examine the behavior of the preconditioner with respect to $p$-refinement. We demonstrate that the resulting algorithm can be easily extended to a $p$-stable algorithm that exhibits excellent convergence behavior of the preconditioner with respect to the $B$-spline degree $p$.

\subsubsection{Test 1: the $2$-$d$ unpreconditioned system}\label{sec:none} 
We computed the condition number $\kappa_2(\bm{A}_{\bm{curl}})$ and the number of iterations required for convergence of the \CG solver for various choices of $p$, $n$, and $\tau$. The exact solutions were defined as small perturbations from the corresponding null spaces:
\begin{equation}\label{eq:curl-exact-sol-1}
\bm{u}_{\bm{curl}}(x_1, x_2) = \tau^{-1} \begin{pmatrix}
                          x_2 (x_2-1)(2x_1-1) \\
                          x_1 (x_1-1)(2x_2-1) 
                          \end{pmatrix} + 10^{-2} \bm{v}_{\bm{curl}}(x_1, x_2),
\end{equation}
and
\begin{equation}\label{eq:div-exact-sol-1}
\bm{u}_{div}(x_1, x_2) = \tau^{-1} \begin{pmatrix}
                          x_1 (x_1-1)(2x_2-1) \\
                          x_2 (x_2-1)(2x_1-1) 
                          \end{pmatrix} + 10^{-2} \bm{v}_{div}(x_1, x_2),
\end{equation}
where $\bm{v}_{\bm{curl}}$ and $\bm{v}_{div}$ are solutions of \eqref{eq:curl-curl} and \eqref{eq:div-div} respectively, with $f = \begin{pmatrix}
1\\
1
\end{pmatrix}$. Simple computation shows that 
\begin{equation}\label{eq:curl-exact-sol-2}
\bm{v}_{\bm{curl}}(x_1, x_2) = C_1 \begin{pmatrix}
 e^{-\sqrt{\tau} x_2 + \sqrt{\tau}/2} + e^{\sqrt{\tau} x_2 - \sqrt{\tau}/2} \\
 e^{-\sqrt{\tau} x_1 + \sqrt{\tau}/2} + e^{\sqrt{\tau} x_1 - \sqrt{\tau}/2}
\end{pmatrix} + \tau^{-1} \begin{pmatrix}
1\\
1
\end{pmatrix},
\end{equation}
and
\begin{equation}\label{eq:div-exact-sol-2}
\bm{v}_{div}(x_1, x_2) = C_2 \begin{pmatrix}
\cos\left( \sqrt{\tau} x_1 - \sqrt{\tau}/2\right)\\
\cos\left( \sqrt{\tau} x_2 - \sqrt{\tau}/2\right)
\end{pmatrix} + \tau^{-1} \begin{pmatrix}
1\\
1
\end{pmatrix},
\end{equation}
where $C_1$ and $C_2$ are given by
$$
C_1 =\frac{-\tau^{-1}}{e^{-\sqrt{\tau}/2} + e^{\sqrt{\tau}/2}}, \quad C_2 = \frac{-\tau^{-1}}{\cos \left( \sqrt{\tau}/2\right)}. 
$$


\begin{table}[H]

\begin{adjustbox}{width=1.\textwidth}
\begin{subtable}{1.\textwidth} 
    \centering 
    \hspace*{-1.5 cm}
\begin{tabular}{c p{0.cm} p{1.5cm} p{1.5cm} p{1.5cm} p{1.5cm}  c p{1.5cm} p{1.5cm} p{1.5cm} p{1.5cm}}
\hline
 &  & \multicolumn{4}{l}{$p=1$} &  & \multicolumn{4}{l}{$p=2$} \\ \cline{3-6} \cline{8-11} 
\diaghead{taun-----}{$\tau$}{$n$} &  & $8$   & $16$   & $32$   & $64$    & & $8$   & $16$   & $32$   & $64$    \\ \hline
$10^{-4}$    & & $9.96e+00$ & $1.43e+01$ & $1.83e+01$ & $2.19e+01$ & & $6.07e+00$ & $9.98e+00$ & $1.39e+01$ & $1.75e+01$ \\
$10^{-3}$    & & $9.96e+00$ & $1.43e+01$ & $1.83e+01$ & $2.19e+01$ & & $6.07e+00$ & $9.98e+00$ & $1.39e+01$ & $1.75e+01$ \\
$10^{-2}$    & & $9.95e+00$ & $1.43e+01$ & $1.83e+01$ & $2.19e+01$ & & $6.07e+00$ & $9.97e+00$ & $1.39e+01$ & $1.75e+01$ \\
$10^{-1}$    & & $9.89e+00$ & $1.43e+01$ & $1.82e+01$ & $2.18e+01$ & & $6.03e+00$ & $9.92e+00$ & $1.39e+01$ & $1.74e+01$ \\
$1$          & & $9.38e+00$ & $1.36e+01$ & $1.74e+01$ & $2.09e+01$ & & $5.70e+00$ & $9.41e+00$ & $1.32e+01$ & $1.67e+01$ \\
$10^{1}$     & & $6.95e+00$ & $1.05e+01$ & $1.36e+01$ & $1.68e+01$ & & $3.91e+00$ & $7.02e+00$ & $1.02e+01$ & $1.31e+01$ \\
$10^{2}$     & & $2.55e+00$ & $5.06e+00$ & $8.06e+00$ & $1.12e+01$ & & $3.59e+00$ & $3.41e+00$ & $5.31e+00$ & $7.98e+00$ \\
$10^{3}$     & & $3.07e+00$ & $2.16e+00$ & $3.25e+00$ & $6.04e+00$ & & $1.39e+01$ & $7.01e+00$ & $3.39e+00$ & $3.72e+00$ \\
$10^{4}$     & & $5.55e+00$ & $4.46e+00$ & $2.62e+00$ & $2.18e+00$ & & $2.62e+01$ & $2.29e+01$ & $1.32e+01$ & $5.53e+00$ 
\end{tabular}
\end{subtable}
\end{adjustbox}

\vspace{.5cm}

\begin{adjustbox}{width=1.\textwidth}
\begin{subtable}{\textwidth} 
    \centering 
    \hspace*{-1.5 cm}
\begin{tabular}{c c p{1.5cm} p{1.5cm} p{1.5cm} p{1.5cm}  c p{1.5cm} p{1.5cm} p{1.5cm} p{1.5cm}}
\hline
 &  & \multicolumn{4}{l}{$p=3$} &  & \multicolumn{4}{l}{$p=4$} \\ \cline{3-6} \cline{8-11} 
\diaghead{taun-----}{$\tau$}{$n$} &  & $8$   & $16$   & $32$   & $64$    & & $8$   & $16$   & $32$   & $64$    \\ \hline
$10^{-4}$    & & $1.01e+01$ & $1.08e+01$ & $1.50e+01$ & $1.96e+01$ & & $3.23e+01$ & $3.71e+01$ & $4.34e+01$ & $4.83e+01$ \\
$10^{-3}$    & & $1.01e+01$ & $1.08e+01$ & $1.50e+01$ & $1.96e+01$ & & $3.23e+01$ & $3.71e+01$ & $4.34e+01$ & $4.83e+01$ \\
$10^{-2}$    & & $1.01e+01$ & $1.08e+01$ & $1.50e+01$ & $1.96e+01$ & & $3.23e+01$ & $3.71e+01$ & $4.34e+01$ & $4.83e+01$ \\
$10^{-1}$    & & $1.01e+01$ & $1.08e+01$ & $1.49e+01$ & $1.95e+01$ & & $3.23e+01$ & $3.71e+01$ & $4.34e+01$ & $4.83e+01$ \\
$1$          & & $1.01e+01$ & $1.08e+01$ & $1.42e+01$ & $1.86e+01$ & & $3.22e+01$ & $3.71e+01$ & $4.34e+01$ & $4.83e+01$ \\
$10^{1}$     & & $1.02e+01$ & $1.09e+01$ & $1.12e+01$ & $1.44e+01$ & & $3.19e+01$ & $3.72e+01$ & $4.35e+01$ & $4.83e+01$ \\
$10^{2}$     & & $1.22e+01$ & $1.16e+01$ & $1.14e+01$ & $1.13e+01$ & & $3.48e+01$ & $3.79e+01$ & $4.44e+01$ & $4.87e+01$ \\
$10^{3}$     & & $4.25e+01$ & $2.75e+01$ & $1.43e+01$ & $1.21e+01$ & & $9.80e+01$ & $7.58e+01$ & $5.41e+01$ & $5.26e+01$ \\
$10^{4}$     & & $8.52e+01$ & $8.11e+01$ & $5.94e+01$ & $2.69e+01$ & & $2.05e+02$ & $1.84e+02$ & $1.79e+02$ & $1.23e+02$
\end{tabular}
\end{subtable}
\end{adjustbox}

\vspace{.5cm}

\begin{adjustbox}{width=1.\textwidth}
\begin{subtable}{1.\textwidth} 
    \centering 
    \hspace*{-1.5 cm}
\begin{tabular}{c c p{1.5cm} p{1.5cm} p{1.5cm} p{1.5cm}  c p{1.5cm} p{1.5cm} p{1.5cm} p{1.5cm}}
\hline
 &  & \multicolumn{4}{l}{$p=5$} &  & \multicolumn{4}{l}{$p=6$} \\ \cline{3-6} \cline{8-11} 
\diaghead{taun-----}{$\tau$}{$n$} &  & $8$   & $16$   & $32$   & $64$    & & $8$   & $16$   & $32$   & $64$    \\ \hline
$10^{-4}$    & & $1.05e+02$ & $1.15e+02$ & $1.42e+02$ & $1.86e+02$ & & $3.56e+02$ & $3.52e+02$ & $4.26e+02$ & $5.89e+02$ \\
$10^{-3}$    & & $1.05e+02$ & $1.15e+02$ & $1.42e+02$ & $1.86e+02$ & & $3.56e+02$ & $3.52e+02$ & $4.26e+02$ & $5.89e+02$ \\
$10^{-2}$    & & $1.05e+02$ & $1.15e+02$ & $1.42e+02$ & $1.86e+02$ & & $3.56e+02$ & $3.52e+02$ & $4.26e+02$ & $5.89e+02$ \\
$10^{-1}$    & & $1.05e+02$ & $1.15e+02$ & $1.42e+02$ & $1.86e+02$ & & $3.56e+02$ & $3.52e+02$ & $4.26e+02$ & $5.89e+02$ \\
$1$          & & $1.05e+02$ & $1.15e+02$ & $1.42e+02$ & $1.86e+02$ &  & $3.54e+02$ & $3.52e+02$ & $4.26e+02$ & $5.89e+02$ \\
$10^{1}$     & & $1.02e+02$ & $1.14e+02$ & $1.42e+02$ & $1.86e+02$ & & $3.43e+02$ & $3.48e+02$ & $4.25e+02$ & $5.89e+02$ \\
$10^{2}$     & & $1.02e+02$ & $1.11e+02$ & $1.43e+02$ & $1.87e+02$ & & $3.48e+02$ & $3.28e+02$ & $4.21e+02$ & $5.92e+02$ \\
$10^{3}$     & & $2.39e+02$ & $1.84e+02$ & $1.63e+02$ & $2.01e+02$ & & $6.68e+02$ & $4.76e+02$ & $4.32e+02$ & $6.24e+02$ \\
$10^{4}$     & & $5.14e+02$ & $3.86e+02$ & $4.10e+02$ & $4.12e+02$ & & $1.41e+03$ & $8.86e+02$ & $8.46e+02$ & $1.14e+03$
\end{tabular}
\end{subtable}
\end{adjustbox}
\caption{$2$-$d$ preconditioned $\bm{H}_0(\bm{curl}, \Omega)$: condition number $\kappa_2 \left( \bm{B}_{\bm{curl}} \bm{A}_{\bm{curl}} \right)$ in the case of Jacobi smoothing.}  
  \label{tab:curl-cn-jacobi}  
\end{table}


Note that the functions defined above, namely $\bm{u}_{\bm{curl}} \in \bm{H}_0(\bm{curl}, \Omega)$ and $\bm{u}_{div} \in \bm{H}_0(div, \Omega)$, are solutions to problems \eqref{eq:curl-curl}-\eqref{eq:div-div} with right-hand sides given by
$$
\bm{f}_{\bm{curl}} = 10^{-2} + \begin{pmatrix}
                          x_2 (x_2-1)(2x_1-1) \\
                          x_1 (x_1-1)(2x_2-1) 
                          \end{pmatrix}, \quad (x_1, x_2) \in (0, 1)^2,
$$
and 
$$
\bm{f}_{div} =  10^{-2} + \begin{pmatrix}
                          x_1 (x_1-1)(2x_2-1) \\
                          x_2 (x_2-1)(2x_1-1) 
                          \end{pmatrix}, \quad (x_1, x_2) \in (0, 1)^2,
$$
respectively. It is worth mentioning that both $\bm{f}_{\bm{curl}}$ and $\bm{f}_{div}$ are independent of the parameter $\tau$.


\begin{table}[H]

\begin{adjustbox}{width=1.\textwidth}
\begin{subtable}{1.\textwidth} 
    \centering 
    \hspace*{-1.5 cm}
\begin{tabular}{c p{0.cm} p{1.5cm} p{1.5cm} p{1.5cm} p{1.5cm}  c p{1.5cm} p{1.5cm} p{1.5cm} p{1.5cm}}
\hline
 &  & \multicolumn{4}{l}{$p=1$} &  & \multicolumn{4}{l}{$p=2$} \\ \cline{3-6} \cline{8-11} 
\diaghead{taun-----}{$\tau$}{$n$} &  & $8$   & $16$   & $32$   & $64$    & & $8$   & $16$   & $32$   & $64$    \\ \hline
$10^{-4}$    & & $4.52e+00$ & $5.94e+00$ & $8.22e+00$ & $1.04e+01$ & & $3.75e+00$ & $4.77e+00$ & $7.12e+00$ & $9.61e+00$ \\
$10^{-3}$    & & $4.52e+00$ & $5.94e+00$ & $8.21e+00$ & $1.04e+01$ & & $3.75e+00$ & $4.77e+00$ & $7.12e+00$ & $9.61e+00$ \\
$10^{-2}$    & & $4.52e+00$ & $5.94e+00$ & $8.21e+00$ & $1.04e+01$ & & $3.74e+00$ & $4.77e+00$ & $7.12e+00$ & $9.61e+00$ \\
$10^{-1}$    & & $4.52e+00$ & $5.90e+00$ & $8.17e+00$ & $1.03e+01$ & & $3.73e+00$ & $4.74e+00$ & $7.08e+00$ & $9.56e+00$ \\
$1$          & & $4.49e+00$ & $5.61e+00$ & $7.78e+00$ & $9.85e+00$ & & $3.63e+00$ & $4.49e+00$ & $6.72e+00$ & $9.10e+00$ \\
$10^{1}$     & & $4.23e+00$ & $4.56e+00$ & $6.02e+00$ & $7.75e+00$ & & $3.21e+00$ & $3.57e+00$ & $5.02e+00$ & $7.02e+00$ \\
$10^{2}$     & & $3.80e+00$ & $4.13e+00$ & $4.50e+00$ & $4.75e+00$ & & $3.07e+00$ & $3.34e+00$ & $3.39e+00$ & $3.87e+00$ \\
$10^{3}$     & & $2.97e+00$ & $3.67e+00$ & $4.75e+00$ & $4.61e+00$ & & $4.29e+00$ & $3.39e+00$ & $3.57e+00$ & $3.70e+00$ \\
$10^{4}$     & & $3.12e+00$ & $3.05e+00$ & $3.19e+00$ & $4.22e+00$ & & $6.45e+00$ & $5.52e+00$ & $3.98e+00$ & $3.38e+00$ 
\end{tabular}
\end{subtable}
\end{adjustbox}

\vspace{.5cm}

\begin{adjustbox}{width=1.\textwidth}
\begin{subtable}{\textwidth} 
    \centering 
    \hspace*{-1.5 cm}
\begin{tabular}{c c p{1.5cm} p{1.5cm} p{1.5cm} p{1.5cm}  c p{1.5cm} p{1.5cm} p{1.5cm} p{1.5cm}}
\hline
 &  & \multicolumn{4}{l}{$p=3$} &  & \multicolumn{4}{l}{$p=4$} \\ \cline{3-6} \cline{8-11} 
\diaghead{taun-----}{$\tau$}{$n$} &  & $8$   & $16$   & $32$   & $64$    & & $8$   & $16$   & $32$   & $64$    \\ \hline
$10^{-4}$    & & $4.86e+00$ & $4.90e+00$ & $6.93e+00$ & $9.75e+00$ & & $1.10e+01$ & $1.07e+01$ & $1.05e+01$ & $1.05e+01$ \\
$10^{-3}$    & & $4.86e+00$ & $4.90e+00$ & $6.93e+00$ & $9.75e+00$ & & $1.10e+01$ & $1.07e+01$ & $1.05e+01$ & $1.05e+01$ \\
$10^{-2}$    & & $4.86e+00$ & $4.90e+00$ & $6.92e+00$ & $9.74e+00$ & & $1.10e+01$ & $1.07e+01$ & $1.05e+01$ & $1.05e+01$ \\
$10^{-1}$    & & $4.83e+00$ & $4.89e+00$ & $6.88e+00$ & $9.69e+00$ & & $1.09e+01$ & $1.07e+01$ & $1.05e+01$ & $1.05e+01$ \\
$1$          & & $4.62e+00$ & $4.81e+00$ & $6.52e+00$ & $9.22e+00$ & & $1.07e+01$ & $1.06e+01$ & $1.05e+01$ & $1.05e+01$ \\
$10^{1}$     & & $3.89e+00$ & $4.36e+00$ & $4.73e+00$ & $7.02e+00$ & & $9.89e+00$ & $1.00e+01$ & $1.03e+01$ & $1.04e+01$ \\
$10^{2}$     & & $4.46e+00$ & $4.13e+00$ & $4.06e+00$ & $4.50e+00$ & & $1.10e+01$ & $1.01e+01$ & $9.52e+00$ & $1.01e+01$ \\
$10^{3}$     & & $9.43e+00$ & $6.33e+00$ & $4.80e+00$ & $4.54e+00$ & & $2.16e+01$ & $1.59e+01$ & $1.28e+01$ & $1.08e+01$ \\
$10^{4}$     & & $1.84e+01$ & $1.54e+01$ & $1.01e+01$ & $5.94e+00$ & & $4.94e+01$ & $3.80e+01$ & $3.06e+01$ & $1.92e+01$
\end{tabular}
\end{subtable}
\end{adjustbox}

\vspace{.5cm}

\begin{adjustbox}{width=1.\textwidth}
\begin{subtable}{1.\textwidth} 
    \centering 
    \hspace*{-1.5 cm}
\begin{tabular}{c c p{1.5cm} p{1.5cm} p{1.5cm} p{1.5cm}  c p{1.5cm} p{1.5cm} p{1.5cm} p{1.5cm}}
\hline
 &  & \multicolumn{4}{l}{$p=5$} &  & \multicolumn{4}{l}{$p=6$} \\ \cline{3-6} \cline{8-11} 
\diaghead{taun-----}{$\tau$}{$n$} &  & $8$   & $16$   & $32$   & $64$    & & $8$   & $16$   & $32$   & $64$    \\ \hline
$10^{-4}$    & & $4.14e+01$ & $3.92e+01$ & $3.84e+01$ & $3.82e+01$ & & $2.39e+02$ & $2.01e+02$ & $2.06e+02$ & $2.06e+02$ \\
$10^{-3}$    & & $4.14e+01$ & $3.92e+01$ & $3.84e+01$ & $3.82e+01$ & & $2.39e+02$ & $2.01e+02$ & $2.06e+02$ & $2.06e+02$ \\
$10^{-2}$    & & $4.14e+01$ & $3.92e+01$ & $3.84e+01$ & $3.82e+01$ & & $2.39e+02$ & $2.01e+02$ & $2.06e+02$ & $2.06e+02$ \\
$10^{-1}$    & & $4.14e+01$ & $3.92e+01$ & $3.84e+01$ & $3.82e+01$ & & $2.39e+02$ & $2.01e+02$ & $2.06e+02$ & $2.06e+02$ \\
$1$          & & $4.12e+01$ & $3.92e+01$ & $3.84e+01$ & $3.82e+01$ & & $2.39e+02$ & $2.01e+02$ & $2.06e+02$ & $2.06e+02$ \\
$10^{1}$     & & $4.12e+01$ & $3.92e+01$ & $3.84e+01$ & $3.82e+01$ & & $2.41e+02$ & $2.02e+02$ & $2.06e+02$ & $2.06e+02$ \\
$10^{2}$     & & $4.41e+01$ & $4.07e+01$ & $3.90e+01$ & $3.83e+01$ & & $2.57e+02$ & $2.08e+02$ & $2.10e+02$ & $2.08e+02$ \\
$10^{3}$     & & $5.86e+01$ & $4.96e+01$ & $4.61e+01$ & $4.13e+01$ & & $3.04e+02$ & $2.35e+02$ & $2.36e+02$ & $2.23e+02$ \\
$10^{4}$     & & $1.36e+02$ & $8.63e+01$ & $7.33e+01$ & $7.72e+01$ & & $4.52e+02$ & $2.96e+02$ & $3.03e+02$ & $3.33e+02$
\end{tabular}
\end{subtable}
\end{adjustbox}
\caption{$2$-$d$ preconditioned $\bm{H}_0(\bm{curl}, \Omega)$: condition number $\kappa_2 \left( \bm{B}_{\bm{curl}} \bm{A}_{\bm{curl}} \right)$ in the case of a Gauss-Seidel smoothing.}  
  \label{tab:curl-cn-Gauss-Seidel}  
\end{table}


The results are summarized in tables \ref{tab:curl-cn-none} and \ref{tab:curl-cgn-none}. As expected, we found that the spectral condition number is very large and increases with $n$. Furthermore, it becomes extremely large as $p$ increases and as $\tau$ approaches $0$. Similar observations apply to the number of \CG iterations. The results for the $\bm{H}_0(div, \Omega)$ problem are similar, and therefore, we do not report them here.


\begin{table}[H]

\begin{adjustbox}{width=1.\textwidth}
\begin{subtable}{1.\textwidth} 
    \centering 
    \hspace*{-2.5 cm}
\begin{tabular}{c p{0.cm} p{.5cm}p{.5cm} p{0.cm} p{.5cm}p{.5cm} p{0.cm} p{.5cm}p{.5cm} p{0.cm} p{.5cm}p{.5cm}p{0.cm} p{.5cm}p{.5cm} p{0.cm} p{.5cm}p{.5cm} p{0.cm} p{.5cm}p{.5cm} p{0.cm} p{.5cm}p{.5cm}}
\cline{2-25}
\multirow{2}{*}{} &                      & \multicolumn{11}{l}{$p=1$}                                                                                        &  & \multicolumn{11}{l}{$p=2$}   \\   
\cline{3-13} \cline{15-25} 
                 \multirow{2}{*}{\diagbox[innerwidth=0.8cm]{\hspace*{0.15cm}$\tau$}{$n$}}   & \multicolumn{1}{c}{} & \multicolumn{2}{l}{$8$} &  & \multicolumn{2}{l}{$16$} &  & \multicolumn{2}{l}{$32$} &  & \multicolumn{2}{l}{$64$} &  & \multicolumn{2}{l}{$8$} &  & \multicolumn{2}{l}{$16$} &  & \multicolumn{2}{l}{$32$} &  & \multicolumn{2}{l}{$64$} \\ \cline{3-4} \cline{6-7} \cline{9-10} \cline{12-13} \cline{15-16} \cline{18-19} \cline{21-22} \cline{24-25} 
           &                      & J         & GS           &  & J          & GS           &  & J          & GS           &  & J          & GS           &  & J         & GS           &  & J          & GS           &  & J          & GS           &  & J          & GS           \\ \hline

$10^{-4}$ &&     $13$ & $10$      &&      $17$ & $11$       &&        $20$ & $13$        &&       $23$ & $16$         
          &&     $11$ & $10$      &&      $13$ & $12$       &&        $17$ & $14$        &&       $20$ & $16$            \\
$10^{-3}$ &&     $13$ & $10$      &&      $17$ & $11$       &&        $20$ & $13$        &&       $23$ & $16$           
          &&     $11$ & $10$      &&      $13$ & $12$       &&        $17$ & $14$        &&       $20$ & $16$            \\
$10^{-2}$ &&     $13$ & $10$      &&      $17$ & $11$       &&        $20$ & $13$        &&       $23$ & $16$           
          &&     $11$ & $10$      &&      $14$ & $12$       &&        $17$ & $14$        &&       $20$ & $16$          \\
$10^{-1}$ &&     $13$ & $10$      &&      $17$ & $11$       &&        $20$ & $13$        &&       $23$ & $16$          
          &&     $11$ & $10$      &&      $14$ & $12$       &&        $17$ & $14$        &&       $20$ & $16$            \\
$1$       &&     $13$ & $10$      &&      $17$ & $11$       &&        $20$ & $13$        &&       $20$ & $16$           
          &&     $12$ & $10$      &&      $14$ & $12$       &&        $17$ & $13$        &&       $20$ & $15$           \\
$10$      &&     $13$ & $9$       &&      $16$ & $11$       &&        $20$ & $13$        &&       $22$ & $15$           
          &&     $11$ & $9$       &&      $13$ & $10$       &&        $16$ & $12$        &&       $19$ & $14$            \\
$10^2$    &&     $8$  & $9$       &&      $13$ & $9$        &&        $16$ & $10$        &&       $19$ & $13$          
          &&     $9$  & $9$       &&      $10$ & $9$        &&        $13$ & $9$         &&       $16$ & $11$            \\
$10^3$    &&     $8$  & $9$       &&      $8$  & $9$        &&        $13$ & $10$        &&       $13$ & $9$           
          &&     $13$ & $10$      &&      $11$ & $9$        &&        $8$  & $9$         &&       $11$ & $9$            \\
$10^4$    &&     $10$ & $9$       &&      $10$ & $9$        &&        $8$  & $9$         &&       $8$  & $9$  
          &&     $15$ & $12$      &&      $17$ & $10$       &&        $13$ & $9$         &&       $9$  & $9$          
\end{tabular}
\end{subtable}
\end{adjustbox}

\vspace{.5cm}

\begin{adjustbox}{width=1.\textwidth}
\begin{subtable}{1.\textwidth} 
    \centering 
    \hspace*{-2.5 cm}
\begin{tabular}{c p{0.cm} p{.5cm}p{.5cm} p{0.cm} p{.5cm}p{.5cm} p{0.cm} p{.5cm}p{.5cm} p{0.cm} p{.5cm}p{.5cm}p{0.cm} p{.5cm}p{.5cm} p{0.cm} p{.5cm}p{.5cm} p{0.cm} p{.5cm}p{.5cm} p{0.cm} p{.5cm}p{.5cm}}
\cline{2-25}
\multirow{2}{*}{} &                      & \multicolumn{11}{l}{$p=3$}                                                                                        &  & \multicolumn{11}{l}{$p=4$}   \\   
\cline{3-13} \cline{15-25} 
                 \multirow{2}{*}{\diagbox[innerwidth=0.8cm]{\hspace*{0.15cm}$\tau$}{$n$}}   & \multicolumn{1}{c}{} & \multicolumn{2}{l}{$8$} &  & \multicolumn{2}{l}{$16$} &  & \multicolumn{2}{l}{$32$} &  & \multicolumn{2}{l}{$64$} &  & \multicolumn{2}{l}{$8$} &  & \multicolumn{2}{l}{$16$} &  & \multicolumn{2}{l}{$32$} &  & \multicolumn{2}{l}{$64$} \\ \cline{3-4} \cline{6-7} \cline{9-10} \cline{12-13} \cline{15-16} \cline{18-19} \cline{21-22} \cline{24-25} 
           &                      & J         & GS           &  & J          & GS           &  & J          & GS           &  & J          & GS           &  & J         & GS           &  & J          & GS           &  & J          & GS           &  & J          & GS           \\ \hline

$10^{-4}$ &&     $12$ & $10$      &&      $15$ & $12$       &&        $17$ & $13$        &&       $21$ & $15$         
          &&     $15$ & $12$      &&      $19$ & $12$       &&        $22$ & $13$        &&       $25$ & $15$            \\
$10^{-3}$ &&     $13$ & $10$      &&      $15$ & $12$       &&        $17$ & $13$        &&       $21$ & $15$           
          &&     $15$ & $12$      &&      $19$ & $12$       &&        $22$ & $13$        &&       $25$ & $15$            \\
$10^{-2}$ &&     $13$ & $10$      &&      $15$ & $12$       &&        $18$ & $13$        &&       $21$ & $15$           
          &&     $15$ & $12$      &&      $19$ & $12$       &&        $23$ & $13$        &&       $25$ & $15$          \\
$10^{-1}$ &&     $13$ & $10$      &&      $15$ & $12$       &&        $18$ & $13$        &&       $21$ & $15$          
          &&     $15$ & $12$      &&      $19$ & $12$       &&        $23$ & $13$        &&       $25$ & $15$            \\
$1$       &&     $13$ & $10$      &&      $15$ & $11$       &&        $18$ & $12$        &&       $21$ & $15$           
          &&     $15$ & $12$      &&      $20$ & $12$       &&        $23$ & $13$        &&       $25$ & $15$            \\
$10$      &&     $12$ & $9$       &&      $15$ & $9$        &&        $17$ & $11$        &&       $20$ & $14$           
          &&     $15$ & $11$      &&      $19$ & $11$       &&        $21$ & $11$        &&       $23$ & $13$            \\
$10^2$    &&     $12$ & $9$       &&      $15$ & $10$       &&        $17$ & $9$         &&       $20$ & $10$          
          &&     $13$ & $11$      &&      $16$ & $11$       &&        $17$ & $11$        &&       $19$ & $10$            \\
$10^3$    &&     $12$ & $11$      &&      $13$ & $10$       &&        $14$ & $10$        &&       $17$ & $10$           
          &&     $16$ & $13$      &&      $17$ & $12$       &&        $13$ & $11$        &&       $14$ & $10$            \\
$10^4$    &&     $19$ & $15$      &&      $21$ & $12$       &&        $17$ & $10$        &&       $12$ & $10$  
          &&     $20$ & $16$      &&      $23$ & $14$       &&        $18$ & $11$        &&       $13$ & $10$       
\end{tabular}
\end{subtable}
\end{adjustbox}

\vspace{.5cm}

\begin{adjustbox}{width=1.\textwidth}
\begin{subtable}{1.\textwidth} 
    \centering 
    \hspace*{-2.5 cm}
\begin{tabular}{c p{0.cm} p{.5cm}p{.5cm} p{0.cm} p{.5cm}p{.5cm} p{0.cm} p{.5cm}p{.5cm} p{0.cm} p{.5cm}p{.5cm}p{0.cm} p{.5cm}p{.5cm} p{0.cm} p{.5cm}p{.5cm} p{0.cm} p{.5cm}p{.5cm} p{0.cm} p{.5cm}p{.5cm}}
\cline{2-25}
\multirow{2}{*}{} &                      & \multicolumn{11}{l}{$p=5$}                                                                                        &  & \multicolumn{11}{l}{$p=6$}   \\   
\cline{3-13} \cline{15-25} 
                 \multirow{2}{*}{\diagbox[innerwidth=0.8cm]{\hspace*{0.15cm}$\tau$}{$n$}}   & \multicolumn{1}{c}{} & \multicolumn{2}{l}{$8$} &  & \multicolumn{2}{l}{$16$} &  & \multicolumn{2}{l}{$32$} &  & \multicolumn{2}{l}{$64$} &  & \multicolumn{2}{l}{$8$} &  & \multicolumn{2}{l}{$16$} &  & \multicolumn{2}{l}{$32$} &  & \multicolumn{2}{l}{$64$} \\ \cline{3-4} \cline{6-7} \cline{9-10} \cline{12-13} \cline{15-16} \cline{18-19} \cline{21-22} \cline{24-25} 
           &                      & J         & GS           &  & J          & GS           &  & J          & GS           &  & J          & GS           &  & J         & GS           &  & J          & GS           &  & J          & GS           &  & J          & GS           \\ \hline

$10^{-4}$ &&     $18$ & $14$      &&      $21$ & $16$       &&        $27$ & $17$        &&       $29$ & $15$         
          &&     $19$ & $15$      &&      $24$ & $15$       &&        $30$ & $18$        &&       $33$ & $17$            \\
$10^{-3}$ &&     $18$ & $14$      &&      $21$ & $16$       &&        $27$ & $17$        &&       $29$ & $15$           
          &&     $19$ & $15$      &&      $24$ & $15$       &&        $29$ & $18$        &&       $33$ & $17$            \\
$10^{-2}$ &&     $18$ & $14$      &&      $21$ & $16$       &&        $27$ & $17$        &&       $29$ & $15$           
          &&     $19$ & $15$      &&      $24$ & $15$       &&        $29$ & $18$        &&       $33$ & $17$          \\
$10^{-1}$ &&     $18$ & $14$      &&      $21$ & $15$       &&        $27$ & $17$        &&       $29$ & $15$          
          &&     $19$ & $15$      &&      $24$ & $15$       &&        $29$ & $18$        &&       $33$ & $17$            \\
$1$       &&     $18$ & $14$      &&      $21$ & $15$       &&        $27$ & $16$        &&       $29$ & $14$           
          &&     $20$ & $15$      &&      $23$ & $15$       &&        $29$ & $18$        &&       $32$ & $16$            \\
$10$      &&     $18$ & $13$      &&      $21$ & $14$       &&        $24$ & $15$        &&       $28$ & $15$           
          &&     $19$ & $14$      &&      $22$ & $14$       &&        $28$ & $16$        &&       $30$ & $16$            \\
$10^2$    &&     $17$ & $11$      &&      $18$ & $13$       &&        $19$ & $13$        &&       $21$ & $11$          
          &&     $17$ & $13$      &&      $19$ & $13$       &&        $21$ & $13$        &&       $22$ & $12$            \\
$10^3$    &&     $19$ & $13$      &&      $19$ & $12$       &&        $15$ & $10$        &&       $15$ & $11$           
          &&     $19$ & $13$      &&      $20$ & $13$       &&        $17$ & $13$        &&       $16$ & $12$            \\
$10^4$    &&     $21$ & $16$      &&      $24$ & $14$       &&        $19$ & $11$        &&       $14$ & $9$   
          &&     $20$ & $17$      &&      $23$ & $14$       &&        $19$ & $12$        &&       $15$ & $11$        
\end{tabular}
\end{subtable}
\end{adjustbox}
\caption{$2$-$d$ preconditioned $\bm{H}_0(\bm{curl}, \Omega)$: \CG iterations  in the case of Jacobi (J) and the Gauss-Seidel (GS) smoothing. Exact solution is given by \eqref{eq:curl-exact-sol-1}.}  
  \label{tab:curl-cgn}   
\end{table}

It's important to note that, for certain types of problems, reaching the stopping criterion and having a decrease in residual error does not necessarily mean that the solution has converged to the exact one. In fact, it could lead to a non-physical solution. To test this scenario, we modified the analytic solutions \eqref{eq:curl-exact-sol-1}-\eqref{eq:div-exact-sol-1} by considering only the parts corresponding to right hand sides equal to $\begin{pmatrix}
1\\
1
\end{pmatrix}$, i.e $\bm{v}_{\bm{curl}}$ and $\bm{v}_{div}$ defined in \eqref{eq:curl-exact-sol-2} and \eqref{eq:div-exact-sol-2}.


\begin{table}[H]

\begin{adjustbox}{width=1.\textwidth}
\begin{subtable}{1.\textwidth} 
    \centering 
    \hspace*{-1.5 cm}
\begin{tabular}{c p{0.cm} p{1.5cm} p{1.5cm} p{1.5cm} p{1.5cm}  c p{1.5cm} p{1.5cm} p{1.5cm} p{1.5cm}}
\hline
 &  & \multicolumn{4}{l}{$p=1$} &  & \multicolumn{4}{l}{$p=2$} \\ \cline{3-6} \cline{8-11} 
\diaghead{taun--}{$\tau$}{$n$} &  & $8$   & $16$   & $32$   & $64$    & & $8$   & $16$   & $32$   & $64$    \\ \hline
 $10^{-4}$    & & $9.96e+00$ & $1.43e+01$ & $1.83e+01$ & $2.19e+01$ && $6.07e+00$ & $9.98e+00$ & $1.39e+01$ & $1.75e+01$ \\
 $10^{-3}$    & & $9.96e+00$ & $1.43e+01$ & $1.83e+01$ & $2.19e+01$ && $6.07e+00$ & $9.98e+00$ & $1.39e+01$ & $1.75e+01$ \\
 $10^{-2}$    & & $9.95e+00$ & $1.43e+01$ & $1.83e+01$ & $2.19e+01$ && $6.07e+00$ & $9.97e+00$ & $1.39e+01$ & $1.75e+01$ \\
 $10^{-1}$    & & $9.89e+00$ & $1.43e+01$ & $1.82e+01$ & $2.18e+01$ && $6.03e+00$ & $9.92e+00$ & $1.39e+01$ & $1.74e+01$ \\
 $1$          & & $9.38e+00$ & $1.36e+01$ & $1.74e+01$ & $2.09e+01$ && $5.70e+00$ & $9.41e+00$ & $1.32e+01$ & $1.67e+01$ \\
 $10^{1}$     & & $6.95e+00$ & $1.05e+01$ & $1.36e+01$ & $1.68e+01$ && $3.91e+00$ & $7.02e+00$ & $1.02e+01$ & $1.31e+01$ \\
 $10^{2}$     & & $2.55e+00$ & $5.06e+00$ & $8.06e+00$ & $1.12e+01$ && $3.59e+00$ & $3.41e+00$ & $5.31e+00$ & $7.98e+00$ \\
 $10^{3}$     & & $3.07e+00$ & $2.16e+00$ & $3.25e+00$ & $6.04e+00$ && $1.39e+01$ & $7.01e+00$ & $3.39e+00$ & $3.72e+00$ \\
 $10^{4}$     & & $5.55e+00$ & $4.46e+00$ & $2.62e+00$ & $2.18e+00$ && $2.62e+01$ & $2.29e+01$ & $1.32e+01$ & $5.53e+00$
\end{tabular}
\end{subtable}
\end{adjustbox}

\vspace{.5cm}

\begin{adjustbox}{width=1.\textwidth}
\begin{subtable}{\textwidth} 
    \centering 
    \hspace*{-1.5 cm}
\begin{tabular}{c c p{1.5cm} p{1.5cm} p{1.5cm} p{1.5cm}  c p{1.5cm} p{1.5cm} p{1.5cm} p{1.5cm}}
\hline
 &  & \multicolumn{4}{l}{$p=3$} &  & \multicolumn{4}{l}{$p=4$} \\ \cline{3-6} \cline{8-11} 
\diaghead{taun--}{$\tau$}{$n$} &  & $8$   & $16$   & $32$   & $64$    & & $8$   & $16$   & $32$   & $64$    \\ \hline
 $10^{-4}$    && $1.01e+01$ & $1.08e+01$ & $1.50e+01$ & $1.96e+01$ && $3.23e+01$ & $3.71e+01$ & $4.34e+01$ & $4.83e+01$ \\
 $10^{-3}$    && $1.01e+01$ & $1.08e+01$ & $1.50e+01$ & $1.96e+01$ && $3.23e+01$ & $3.71e+01$ & $4.34e+01$ & $4.83e+01$ \\
 $10^{-2}$    && $1.01e+01$ & $1.08e+01$ & $1.50e+01$ & $1.96e+01$ && $3.23e+01$ & $3.71e+01$ & $4.34e+01$ & $4.83e+01$ \\
 $10^{-1}$    && $1.01e+01$ & $1.08e+01$ & $1.49e+01$ & $1.95e+01$ && $3.23e+01$ & $3.71e+01$ & $4.34e+01$ & $4.83e+01$ \\
 $1$          && $1.01e+01$ & $1.08e+01$ & $1.42e+01$ & $1.86e+01$ && $3.22e+01$ & $3.71e+01$ & $4.34e+01$ & $4.83e+01$ \\
 $10^{1}$     && $1.02e+01$ & $1.09e+01$ & $1.12e+01$ & $1.44e+01$ && $3.19e+01$ & $3.72e+01$ & $4.35e+01$ & $4.83e+01$ \\
 $10^{2}$     && $1.22e+01$ & $1.16e+01$ & $1.14e+01$ & $1.13e+01$ && $3.48e+01$ & $3.79e+01$ & $4.44e+01$ & $4.87e+01$ \\
 $10^{3}$     && $4.25e+01$ & $2.75e+01$ & $1.43e+01$ & $1.21e+01$ && $9.80e+01$ & $7.58e+01$ & $5.41e+01$ & $5.26e+01$ \\
 $10^{4}$     && $8.52e+01$ & $8.11e+01$ & $5.94e+01$ & $2.69e+01$ && $2.05e+02$ & $1.84e+02$ & $1.79e+02$ & $1.23e+02$
\end{tabular}
\end{subtable}
\end{adjustbox}

\vspace{.5cm}

\begin{adjustbox}{width=1.\textwidth}
\begin{subtable}{1.\textwidth} 
    \centering 
    \hspace*{-1.5 cm}
\begin{tabular}{c c p{1.5cm} p{1.5cm} p{1.5cm} p{1.5cm}  c p{1.5cm} p{1.5cm} p{1.5cm} p{1.5cm}}
\hline
 &  & \multicolumn{4}{l}{$p=5$} &  & \multicolumn{4}{l}{$p=6$} \\ \cline{3-6} \cline{8-11} 
\diaghead{taun--}{$\tau$}{$n$} &  & $8$   & $16$   & $32$   & $64$    & & $8$   & $16$   & $32$   & $64$    \\ \hline
 $10^{-4}$    && $1.05e+02$ & $1.15e+02$ & $1.42e+02$ & $1.86e+02$ && $3.56e+02$ & $3.52e+02$ & $4.26e+02$ & $5.89e+02$ \\
 $10^{-3}$    && $1.05e+02$ & $1.15e+02$ & $1.42e+02$ & $1.86e+02$ && $3.56e+02$ & $3.52e+02$ & $4.26e+02$ & $5.89e+02$ \\
 $10^{-2}$    && $1.05e+02$ & $1.15e+02$ & $1.42e+02$ & $1.86e+02$ && $3.56e+02$ & $3.52e+02$ & $4.26e+02$ & $5.89e+02$ \\
 $10^{-1}$    && $1.05e+02$ & $1.15e+02$ & $1.42e+02$ & $1.86e+02$ && $3.56e+02$ & $3.52e+02$ & $4.26e+02$ & $5.89e+02$ \\
 $1$          && $1.05e+02$ & $1.15e+02$ & $1.42e+02$ & $1.86e+02$ && $3.54e+02$ & $3.52e+02$ & $4.26e+02$ & $5.89e+02$ \\
 $10^{1}$     && $1.02e+02$ & $1.14e+02$ & $1.42e+02$ & $1.86e+02$ && $3.43e+02$ & $3.48e+02$ & $4.25e+02$ & $5.89e+02$ \\
 $10^{2}$     && $1.02e+02$ & $1.11e+02$ & $1.43e+02$ & $1.87e+02$ && $3.48e+02$ & $3.28e+02$ & $4.21e+02$ & $5.92e+02$ \\
 $10^{3}$     && $2.39e+02$ & $1.84e+02$ & $1.63e+02$ & $2.01e+02$ && $6.68e+02$ & $4.76e+02$ & $4.32e+02$ & $6.24e+02$ \\
 $10^{4}$     && $5.14e+02$ & $3.86e+02$ & $4.10e+02$ & $4.12e+02$ && $1.41e+03$ & $8.86e+02$ & $8.46e+02$ & $1.14e+03$
\end{tabular}
\end{subtable}
\end{adjustbox}
\caption{$2$-$d$ preconditioned $\bm{H}_0(div, \Omega)$: condition number $\kappa_2 \left( \bm{B}_{div} \bm{A}_{div} \right)$ in the case of a Jacobi smoothing.}  
  \label{tab:div-cn-jacobi}  
\end{table}


 We evaluated the iteration counts, residual error, and relative $l^2$-error for different values of $\tau$ using a fixed number of elements ($n=32$) and a B-spline degree of $p=3$. The results are shown in Table \ref{tab:cgn-res-err}. As we observed, even when the \CG method reached the stopping criterion, the relative error was still very high for small values of $\tau$. This indicates that the approximated solution did not converge to the exact one. Next, we will show that this misleading convergence can be remedied using the ASP strategy.


\begin{table}[H]

\begin{adjustbox}{width=1.\textwidth}
\begin{subtable}{1.\textwidth} 
    \centering 
    \hspace*{-1.5 cm}
\begin{tabular}{c p{0.cm} p{1.5cm} p{1.5cm} p{1.5cm} p{1.5cm}  c p{1.5cm} p{1.5cm} p{1.5cm} p{1.5cm}}
\hline
 &  & \multicolumn{4}{l}{$p=1$} &  & \multicolumn{4}{l}{$p=2$} \\ \cline{3-6} \cline{8-11} 
\diaghead{taun-----}{$\tau$}{$n$} &  & $8$   & $16$   & $32$   & $64$    & & $8$   & $16$   & $32$   & $64$    \\ \hline
$10^{-4}$    & & $4.85e+00$ & $7.49e+00$ & $1.04e+01$ & $1.31e+01$ & & $3.49e+00$ & $5.40e+00$ & $8.04e+00$ & $1.08e+01$ \\
$10^{-3}$    & & $4.85e+00$ & $7.49e+00$ & $1.04e+01$ & $1.31e+01$ & & $3.49e+00$ & $5.40e+00$ & $8.04e+00$ & $1.08e+01$ \\
$10^{-2}$    & & $4.85e+00$ & $7.48e+00$ & $1.04e+01$ & $1.31e+01$ & & $3.48e+00$ & $5.39e+00$ & $8.03e+00$ & $1.08e+01$ \\
$10^{-1}$    & & $4.82e+00$ & $7.44e+00$ & $1.03e+01$ & $1.30e+01$ & & $3.46e+00$ & $5.36e+00$ & $7.99e+00$ & $1.08e+01$ \\
$1$          & & $4.56e+00$ & $7.06e+00$ & $9.81e+00$ & $1.24e+01$ & & $3.29e+00$ & $5.07e+00$ & $7.58e+00$ & $1.03e+01$ \\
$10^{1}$     & & $3.63e+00$ & $5.22e+00$ & $7.58e+00$ & $9.78e+00$ & & $2.93e+00$ & $3.56e+00$ & $5.63e+00$ & $7.91e+00$ \\
$10^{2}$     & & $3.33e+00$ & $3.62e+00$ & $4.06e+00$ & $5.99e+00$ & & $2.83e+00$ & $2.86e+00$ & $2.99e+00$ & $4.37e+00$ \\
$10^{3}$     & & $2.12e+00$ & $2.95e+00$ & $3.50e+00$ & $3.71e+00$ & & $3.31e+00$ & $2.84e+00$ & $2.85e+00$ & $2.90e+00$ \\
$10^{4}$     & & $2.14e+00$ & $2.06e+00$ & $2.37e+00$ & $3.27e+00$ & & $4.87e+00$ & $4.04e+00$ & $3.17e+00$ & $2.82e+00$ 
\end{tabular}
\end{subtable}
\end{adjustbox}

\vspace{.5cm}

\begin{adjustbox}{width=1.\textwidth}
\begin{subtable}{\textwidth} 
    \centering 
    \hspace*{-1.5 cm}
\begin{tabular}{c c p{1.5cm} p{1.5cm} p{1.5cm} p{1.5cm}  c p{1.5cm} p{1.5cm} p{1.5cm} p{1.5cm}}
\hline
 &  & \multicolumn{4}{l}{$p=3$} &  & \multicolumn{4}{l}{$p=4$} \\ \cline{3-6} \cline{8-11} 
\diaghead{taun-----}{$\tau$}{$n$} &  & $8$   & $16$   & $32$   & $64$    & & $8$   & $16$   & $32$   & $64$    \\ \hline
$10^{-4}$    & & $4.44e+00$ & $5.18e+00$ & $7.78e+00$ & $1.09e+01$ & & $1.19e+01$ & $1.15e+01$ & $1.13e+01$ & $1.12e+01$ \\
$10^{-3}$    & & $4.44e+00$ & $5.18e+00$ & $7.78e+00$ & $1.09e+01$ & & $1.19e+01$ & $1.15e+01$ & $1.13e+01$ & $1.12e+01$ \\
$10^{-2}$    & & $4.44e+00$ & $5.18e+00$ & $7.78e+00$ & $1.09e+01$ & & $1.19e+01$ & $1.15e+01$ & $1.13e+01$ & $1.12e+01$ \\
$10^{-1}$    & & $4.44e+00$ & $5.15e+00$ & $7.73e+00$ & $1.08e+01$ & & $1.19e+01$ & $1.15e+01$ & $1.13e+01$ & $1.12e+01$ \\
$1$          & & $4.43e+00$ & $4.87e+00$ & $7.32e+00$ & $1.03e+01$ & & $1.19e+01$ & $1.15e+01$ & $1.13e+01$ & $1.12e+01$ \\
$10^{1}$     & & $4.36e+00$ & $4.37e+00$ & $5.26e+00$ & $7.85e+00$ & & $1.18e+01$ & $1.15e+01$ & $1.13e+01$ & $1.12e+01$ \\
$10^{2}$     & & $4.83e+00$ & $4.35e+00$ & $4.33e+00$ & $4.37e+00$ & & $1.33e+01$ & $1.19e+01$ & $1.13e+01$ & $1.12e+01$ \\
$10^{3}$     & & $7.82e+00$ & $6.16e+00$ & $4.63e+00$ & $4.32e+00$ & & $2.03e+01$ & $1.81e+01$ & $1.37e+01$ & $1.15e+01$ \\
$10^{4}$     & & $1.64e+01$ & $1.20e+01$ & $8.84e+00$ & $5.78e+00$ & & $5.02e+01$ & $3.20e+01$ & $3.12e+01$ & $2.12e+01$
\end{tabular}
\end{subtable}
\end{adjustbox}

\vspace{.5cm}

\begin{adjustbox}{width=1.\textwidth}
\begin{subtable}{1.\textwidth} 
    \centering 
    \hspace*{-1.5 cm}
\begin{tabular}{c c p{1.5cm} p{1.5cm} p{1.5cm} p{1.5cm}  c p{1.5cm} p{1.5cm} p{1.5cm} p{1.5cm}}
\hline
 &  & \multicolumn{4}{l}{$p=5$} &  & \multicolumn{4}{l}{$p=6$} \\ \cline{3-6} \cline{8-11} 
\diaghead{taun-----}{$\tau$}{$n$} &  & $8$   & $16$   & $32$   & $64$    & & $8$   & $16$   & $32$   & $64$    \\ \hline
$10^{-4}$    & & $4.32e+01$ & $4.13e+01$ & $4.04e+01$ & $4.01e+01$ & & $2.40e+02$ & $2.00e+02$ & $2.03e+02$ & $2.03e+02$ \\
$10^{-3}$    & & $4.32e+01$ & $4.13e+01$ & $4.04e+01$ & $4.01e+01$ & & $2.40e+02$ & $2.00e+02$ & $2.03e+02$ & $2.03e+02$ \\
$10^{-2}$    & & $4.32e+01$ & $4.13e+01$ & $4.04e+01$ & $4.01e+01$ & & $2.40e+02$ & $2.00e+02$ & $2.03e+02$ & $2.03e+02$  \\
$10^{-1}$    & & $4.32e+01$ & $4.13e+01$ & $4.04e+01$ & $4.01e+01$ & & $2.40e+02$ & $2.00e+02$ & $2.03e+02$ & $2.03e+02$ \\
$1$          & & $4.29e+01$ & $4.13e+01$ & $4.04e+01$ & $4.01e+01$ & & $2.40e+02$ & $2.00e+02$ & $2.03e+02$ & $2.03e+02$ \\
$10^{1}$     & & $4.27e+01$ & $4.11e+01$ & $4.03e+01$ & $4.01e+01$ & & $2.42e+02$ & $2.00e+02$ & $2.03e+02$ & $2.03e+02$ \\
$10^{2}$     & & $4.56e+01$ & $4.20e+01$ & $4.05e+01$ & $4.00e+01$ & & $2.58e+02$ & $2.07e+02$ & $2.07e+02$ & $2.04e+02$ \\
$10^{3}$     & & $5.99e+01$ & $5.36e+01$ & $4.75e+01$ & $4.28e+01$ & & $3.08e+02$ & $2.35e+02$ & $2.35e+02$ & $2.20e+02$ \\
$10^{4}$     & & $1.38e+02$ & $8.25e+01$ & $8.15e+01$ & $8.02e+01$ & & $4.95e+02$ & $3.05e+02$ & $3.05e+02$ & $3.33e+02$
\end{tabular}
\end{subtable}
\end{adjustbox}
\caption{$2$-$d$ preconditioned $\bm{H}_0(div, \Omega)$: condition number $\kappa_2 \left( \bm{B}_{div} \bm{A}_{div} \right)$ in the case of a Gauss-Seidel smoothing.}  
  \label{tab:div-cn-Gauss-Seidel}  
\end{table}


\subsubsection{Test 2: convergence study of the ASP in the $2$-$d$ setting with Jacobi and Gauss-Seidel smoothing}\label{sec:jacobi}
The smoother is provided by Jacobi  and symmetric Gauss-Seidel relaxation schemes. We recall that this allows for an explicit form of the matrix related to the smother, more precisely we have
$$
\bm{S}_\mathcal{D}^{-1} = \bm{D}_{\bm{A}_\mathcal{D}}^{-1},
$$
in the case of Jacobi smoothing, while  when using Gauss-Seidel smoothing $\bm{D}_{\bm{A}_\mathcal{D}}^{-1}$ is replaced by
$$
\bm{S}_\mathcal{D}^{-1} = \bm{L}_{\bm{A}_\mathcal{D}}^{-1} - \bm{L}_{\bm{A}_\mathcal{D}}^{-1} \bm{A}_\mathcal{D} \bm{U}_{\bm{A}_\mathcal{D}}^{-1} + \bm{U}_{\bm{A}_\mathcal{D}}^{-1},
$$


\begin{table}[H]

\begin{adjustbox}{width=1.\textwidth}
\begin{subtable}{1.\textwidth} 
    \centering 
    \hspace*{-2. cm}
\begin{tabular}{c p{0.cm} p{.5cm}p{.5cm} p{0.cm} p{.5cm}p{.5cm} p{0.cm} p{.5cm}p{.5cm} p{0.cm} p{.5cm}p{.5cm}p{0.cm} p{.5cm}p{.5cm} p{0.cm} p{.5cm}p{.5cm} p{0.cm} p{.5cm}p{.5cm} p{0.cm} p{.5cm}p{.5cm}}
\cline{2-25}
\multirow{2}{*}{} &                      & \multicolumn{11}{l}{$p=1$}                                                                                        &  & \multicolumn{11}{l}{$p=2$}   \\   
\cline{3-13} \cline{15-25} 
                 \multirow{2}{*}{\diagbox[innerwidth=0.8cm]{\hspace*{0.15cm}$\tau$}{$n$}}   & \multicolumn{1}{c}{} & \multicolumn{2}{l}{$8$} &  & \multicolumn{2}{l}{$16$} &  & \multicolumn{2}{l}{$32$} &  & \multicolumn{2}{l}{$64$} &  & \multicolumn{2}{l}{$8$} &  & \multicolumn{2}{l}{$16$} &  & \multicolumn{2}{l}{$32$} &  & \multicolumn{2}{l}{$64$} \\ \cline{3-4} \cline{6-7} \cline{9-10} \cline{12-13} \cline{15-16} \cline{18-19} \cline{21-22} \cline{24-25} 
           &                      & J         & GS           &  & J          & GS           &  & J          & GS           &  & J          & GS           &  & J         & GS           &  & J          & GS           &  & J          & GS           &  & J          & GS           \\ \hline

$10^{-4}$ &&     $13$ & $11$      &&      $17$ & $13$       &&        $20$ & $16$        &&       $23$ & $17$         
          &&     $11$ & $10$      &&      $13$ & $12$       &&        $17$ & $14$        &&       $20$ & $16$           \\
$10^{-3}$ &&     $13$ & $11$      &&      $17$ & $13$       &&        $20$ & $16$        &&       $23$ & $17$           
          &&     $11$ & $10$      &&      $13$ & $12$       &&        $17$ & $14$        &&       $20$ & $16$           \\
$10^{-2}$ &&     $13$ & $11$      &&      $17$ & $13$       &&        $20$ & $16$        &&       $23$ & $17$           
          &&     $11$ & $10$      &&      $14$ & $12$       &&        $17$ & $14$        &&       $20$ & $16$           \\
$10^{-1}$ &&     $13$ & $11$      &&      $17$ & $13$       &&        $20$ & $16$        &&       $23$ & $17$          
          &&     $11$ & $10$      &&      $14$ & $12$       &&        $17$ & $14$        &&       $20$ & $16$           \\
$1$       &&     $13$ & $10$      &&      $17$ & $13$       &&        $20$ & $16$        &&       $23$ & $17$           
          &&     $12$ & $10$      &&      $14$ & $12$       &&        $17$ & $13$        &&       $20$ & $16$           \\
$10$      &&     $12$ & $10$      &&      $16$ & $12$       &&        $20$ & $15$        &&       $23$ & $17$           
          &&     $11$ & $9$       &&      $13$ & $10$       &&        $17$ & $13$        &&       $20$ & $16$           \\
$10^2$    &&     $9$  & $10$      &&      $13$ & $9$        &&        $16$ & $11$        &&       $19$ & $13$          
          &&     $8$  & $9$       &&      $10$ & $9$        &&        $14$ & $9$         &&       $16$ & $11$           \\
$10^3$    &&     $8$  & $9$       &&      $8$  & $10$       &&        $10$ & $9$         &&       $13$ & $9$           
          &&     $14$ & $10$      &&      $12$ & $9$        &&        $8$  & $9$         &&       $11$ & $9$            \\
$10^4$    &&     $9$  & $8$       &&      $10$ & $9$        &&        $9$  & $10$        &&       $8$  & $9$  
          &&     $15$ & $11$      &&      $17$ & $10$       &&        $16$ & $10$        &&       $10$ & $9$           
\end{tabular}
\end{subtable}
\end{adjustbox}

\vspace{.5cm}

\begin{adjustbox}{width=1.\textwidth}
\begin{subtable}{1.\textwidth} 
    \centering 
    \hspace*{-2. cm}
\begin{tabular}{c p{0.cm} p{.5cm}p{.5cm} p{0.cm} p{.5cm}p{.5cm} p{0.cm} p{.5cm}p{.5cm} p{0.cm} p{.5cm}p{.5cm}p{0.cm} p{.5cm}p{.5cm} p{0.cm} p{.5cm}p{.5cm} p{0.cm} p{.5cm}p{.5cm} p{0.cm} p{.5cm}p{.5cm}}
\cline{2-25}
\multirow{2}{*}{} &                      & \multicolumn{11}{l}{$p=3$}                                                                                        &  & \multicolumn{11}{l}{$p=4$}   \\   
\cline{3-13} \cline{15-25} 
                 \multirow{2}{*}{\diagbox[innerwidth=0.8cm]{\hspace*{0.15cm}$\tau$}{$n$}}   & \multicolumn{1}{c}{} & \multicolumn{2}{l}{$8$} &  & \multicolumn{2}{l}{$16$} &  & \multicolumn{2}{l}{$32$} &  & \multicolumn{2}{l}{$64$} &  & \multicolumn{2}{l}{$8$} &  & \multicolumn{2}{l}{$16$} &  & \multicolumn{2}{l}{$32$} &  & \multicolumn{2}{l}{$64$} \\ \cline{3-4} \cline{6-7} \cline{9-10} \cline{12-13} \cline{15-16} \cline{18-19} \cline{21-22} \cline{24-25} 
           &                      & J         & GS           &  & J          & GS           &  & J          & GS           &  & J          & GS           &  & J         & GS           &  & J          & GS           &  & J          & GS           &  & J          & GS           \\ \hline

$10^{-4}$ &&     $12$ & $10$      &&      $15$ & $12$       &&        $17$ & $13$        &&       $21$ & $16$         
          &&     $15$ & $12$      &&      $19$ & $13$       &&        $22$ & $13$        &&       $25$ & $15$           \\
$10^{-3}$ &&     $13$ & $10$      &&      $15$ & $12$       &&        $17$ & $13$        &&       $21$ & $16$           
          &&     $15$ & $12$      &&      $19$ & $13$       &&        $22$ & $13$        &&       $25$ & $15$           \\
$10^{-2}$ &&     $13$ & $10$      &&      $15$ & $12$       &&        $18$ & $13$        &&       $21$ & $16$           
          &&     $15$ & $12$      &&      $19$ & $13$       &&        $23$ & $13$        &&       $25$ & $15$           \\
$10^{-1}$ &&     $13$ & $10$      &&      $15$ & $12$       &&        $18$ & $13$        &&       $21$ & $16$          
          &&     $15$ & $12$      &&      $19$ & $13$       &&        $23$ & $13$        &&       $25$ & $15$           \\
$1$       &&     $13$ & $10$      &&      $16$ & $11$       &&        $18$ & $12$        &&       $21$ & $16$           
          &&     $15$ & $12$      &&      $20$ & $13$       &&        $23$ & $13$        &&       $25$ & $15$           \\
$10$      &&     $13$ & $9$       &&      $17$ & $10$       &&        $19$ & $12$        &&       $21$ & $15$           
          &&     $16$ & $11$      &&      $21$ & $11$       &&        $26$ & $12$        &&       $28$ & $14$           \\
$10^2$    &&     $13$ & $10$      &&      $14$ & $10$       &&        $15$ & $10$        &&       $17$ & $10$          
          &&     $15$ & $11$      &&      $17$ & $11$       &&        $19$ & $11$        &&       $21$ & $10$           \\
$10^3$    &&     $17$ & $11$      &&      $16$ & $10$       &&        $11$ & $9$         &&       $11$ & $10$           
          &&     $19$ & $14$      &&      $20$ & $12$       &&        $13$ & $11$        &&       $11$ & $10$           \\
$10^4$    &&     $19$ & $14$      &&      $23$ & $13$       &&        $27$ & $14$        &&       $12$ & $9$  
          &&     $20$ & $15$      &&      $25$ & $15$       &&        $32$ & $16$        &&       $13$ & $9$     
\end{tabular}
\end{subtable}
\end{adjustbox}

\vspace{.5cm}

\begin{adjustbox}{width=1.\textwidth}
\begin{subtable}{1.\textwidth} 
    \centering 
    \hspace*{-2. cm}
\begin{tabular}{c p{0.cm} p{.5cm}p{.5cm} p{0.cm} p{.5cm}p{.5cm} p{0.cm} p{.5cm}p{.5cm} p{0.cm} p{.5cm}p{.5cm}p{0.cm} p{.5cm}p{.5cm} p{0.cm} p{.5cm}p{.5cm} p{0.cm} p{.5cm}p{.5cm} p{0.cm} p{.5cm}p{.5cm}}
\cline{2-25}
\multirow{2}{*}{} &                      & \multicolumn{11}{l}{$p=5$}                                                                                        &  & \multicolumn{11}{l}{$p=6$}   \\   
\cline{3-13} \cline{15-25} 
                 \multirow{2}{*}{\diagbox[innerwidth=0.8cm]{\hspace*{0.15cm}$\tau$}{$n$}}   & \multicolumn{1}{c}{} & \multicolumn{2}{l}{$8$} &  & \multicolumn{2}{l}{$16$} &  & \multicolumn{2}{l}{$32$} &  & \multicolumn{2}{l}{$64$} &  & \multicolumn{2}{l}{$8$} &  & \multicolumn{2}{l}{$16$} &  & \multicolumn{2}{l}{$32$} &  & \multicolumn{2}{l}{$64$} \\ \cline{3-4} \cline{6-7} \cline{9-10} \cline{12-13} \cline{15-16} \cline{18-19} \cline{21-22} \cline{24-25} 
           &                      & J         & GS           &  & J          & GS           &  & J          & GS           &  & J          & GS           &  & J         & GS           &  & J          & GS           &  & J          & GS           &  & J          & GS           \\ \hline

$10^{-4}$ &&     $18$ & $15$      &&      $21$ & $16$       &&        $27$ & $17$        &&       $29$ & $16$         
          &&     $19$ & $15$      &&      $24$ & $17$       &&        $30$ & $19$        &&       $33$ & $18$           \\
$10^{-3}$ &&     $18$ & $15$      &&      $21$ & $16$       &&        $27$ & $17$        &&       $29$ & $16$           
          &&     $19$ & $15$      &&      $24$ & $17$       &&        $29$ & $19$        &&       $33$ & $18$           \\
$10^{-2}$ &&     $18$ & $14$      &&      $21$ & $16$       &&        $27$ & $17$        &&       $29$ & $16$           
          &&     $19$ & $15$      &&      $24$ & $17$       &&        $29$ & $19$        &&       $33$ & $18$           \\
$10^{-1}$ &&     $18$ & $14$      &&      $21$ & $16$       &&        $27$ & $17$        &&       $29$ & $16$          
          &&     $19$ & $15$      &&      $24$ & $17$       &&        $29$ & $21$        &&       $35$ & $18$           \\
$1$       &&     $19$ & $14$      &&      $21$ & $16$       &&        $27$ & $17$        &&       $30$ & $16$           
          &&     $20$ & $15$      &&      $23$ & $17$       &&        $29$ & $20$        &&       $44$ & $18$           \\
$10$      &&     $19$ & $13$      &&      $24$ & $14$       &&        $31$ & $16$        &&       $36$ & $16$           
          &&     $21$ & $13$      &&      $25$ & $14$       &&        $33$ & $17$        &&       $28$ & $17$           \\
$10^2$    &&     $17$ & $12$      &&      $19$ & $13$       &&        $24$ & $13$        &&       $26$ & $12$          
          &&     $18$ & $14$      &&      $21$ & $14$       &&        $26$ & $14$        &&       $28$ & $13$           \\
$10^3$    &&     $21$ & $15$      &&      $22$ & $12$       &&        $13$ & $11$        &&       $13$ & $12$           
          &&     $21$ & $16$      &&      $22$ & $13$       &&        $15$ & $13$        &&       $15$ & $12$           \\
$10^4$    &&     $21$ & $14$      &&      $25$ & $14$       &&        $36$ & $20$        &&       $15$ & $10$  
          &&     $22$ & $17$      &&      $28$ & $15$       &&        $36$ & $19$        &&       $15$ & $11$        
\end{tabular}
\end{subtable}
\end{adjustbox}
\caption{$2$-$d$ preconditioned $\bm{H}_0(div, \Omega)$: \CG iterations  in the case of Jacobi (J) and the Gauss-Seidel (GS) smoothing. Exact solution is given by \eqref{eq:curl-exact-sol-1}.}  
  \label{tab:div-cgn}     
\end{table}


where $\bm{D}_{\bm{A}_\mathcal{D}}$, $\bm{L}_{\bm{A}_\mathcal{D}}$ and $\bm{U}_{\bm{A}_\mathcal{D}}$ stand for the diagonal, the lower and the upper  parts of the matrix $\bm{A}_\mathcal{D}$, respectively. 

Following the approach of the previous subsection, we compute the spectral condition number $\kappa_2 \left( \bm{B}_{\mathcal{D}} \bm{A}_{\mathcal{D}} \right)$ and the number of conjugate gradient iterations required for the preconditioned system to converge, while varying the $B$-spline degree $p$, the number of elements $n$, and the parameter $\tau$. Tables \ref{tab:curl-cn-jacobi}--\ref{tab:curl-cgn} show the results for the $\bm{curl}$ problem, while tables \ref{tab:div-cn-jacobi}--\ref{tab:div-cgn} show the results for the $div$ problem.
\begin{table}[H] 
\begin{adjustbox}{width=\textwidth}
  \begin{subtable}{0.6\textwidth} 
    \subcaption{$\bm{H}_0(\bm{curl}, \Omega)$ problem.}
    \centering
    \begin{tabular}{|c|c|c|c|}
      \hline
      $\tau$ & \CG Iter & Res. Error & $l^2$ Error\\ 
      \hline
      \hline
      $10^{-7}$ 	& $20$     & $1.38e-06$ & $1.52e-06$    \\
 	  $10^{-6}$ 	& $20$     & $1.38e-06$ & $4.43e-07$    \\
 	  $10^{-5}$ 	& $20$     & $1.38e-06$ & $4.25e-07$    \\
      $10^{-4}$ 	& $20$     & $1.38e-06$ & $4.24e-07$    \\
      $10^{-3}$ 	& $20$     & $1.38e-06$ & $4.23e-07$    \\
      $10^{-2}$ 	& $20$     & $1.37e-06$ & $4.13e-07$    \\
      $10^{-1}$ 	& $20$     & $1.30e-06$ & $3.32e-07$    \\
      $1$     		& $20$     & $1.12e-06$ & $1.52e-07$   	\\   
      \hline
    \end{tabular}
  \end{subtable}%
  \begin{subtable}{0.6\textwidth} 
    \subcaption{$\bm{H}_0(div, \Omega)$ problem.}
    \centering
    \begin{tabular}{|c|c|c|c|c|}
      \hline
      $n$ & \CG Iter & Res. Error & $l^2$ Error\\ 
      \hline
      \hline
      $10^{-7}$ 	& $20$     & $1.38e-06$ & $1.58e-06$    \\
 	  $10^{-6}$ 	& $20$     & $1.38e-06$ & $4.47e-07$    \\
 	  $10^{-5}$ 	& $20$     & $1.38e-06$ & $4.25e-07$    \\
      $10^{-4}$ 	& $20$     & $1.38e-06$ & $4.24e-07$    \\
      $10^{-3}$ 	& $20$     & $1.38e-06$ & $4.23e-07$    \\
      $10^{-2}$ 	& $20$     & $1.37e-06$ & $4.13e-07$    \\
      $10^{-1}$ 	& $20$     & $1.32e-06$ & $3.37e-07$    \\
      $1$     		& $20$     & $1.24e-06$ & $1.71e-07$   	\\   
      \hline
    \end{tabular}
  \end{subtable}
  \end{adjustbox}
  \caption{$2$-$d$ ASP reconditioning with Jacobi smoothing: \CG iterations, residual and $l^2$ approximation errors. Exact solutions are given by \eqref{eq:curl-exact-sol-2}--\eqref{eq:div-exact-sol-2}. Parameter values are set to $n=32$, $p = 3$.}
  \label{tab:cgn-res-err-J}
\end{table}
\begin{table}[H] 
\begin{adjustbox}{width=\textwidth}
  \begin{subtable}{0.6\textwidth} 
    \subcaption{$\bm{H}_0(\bm{curl}, \Omega)$ problem.}
    \centering
    \begin{tabular}{|c|c|c|c|}
      \hline
      $p$ & \CG Iter & Res. Error & $l^2$ Error\\ 
      \hline
   	  \hline
      $10^{-7}$ 	& $14$     & $4.09e-07$ & $1.39e-06$    \\
 	  $10^{-6}$ 	& $14$     & $4.09e-07$ & $3.61e-07$    \\
 	  $10^{-5}$ 	& $14$     & $4.09e-07$ & $3.32e-07$    \\
      $10^{-4}$ 	& $14$     & $4.09e-07$ & $3.31e-07$    \\
      $10^{-3}$ 	& $14$     & $4.09e-07$ & $3.30e-07$    \\
      $10^{-2}$ 	& $14$     & $4.09e-07$ & $3.12e-07$    \\
      $10^{-1}$ 	& $14$     & $3.95e-07$ & $2.05e-07$    \\
      $1$     		& $13$     & $1.04e-06$ & $1.33e-07$   	\\   
      \hline
    \end{tabular}
  \end{subtable}%
  \begin{subtable}{0.6\textwidth} 
    \subcaption{$\bm{H}_0(div, \Omega)$ problem.}
    \centering
    \begin{tabular}{|c|c|c|c|c|}
      \hline
      $n$ & \CG Iter & Res. Error & $l^2$ Error\\ 
      \hline
      \hline
      $10^{-7}$ 	& $14$     & $9.20e-07$ & $1.51e-06$    \\
 	  $10^{-6}$ 	& $14$     & $9.20e-07$ & $3.95e-07$    \\
 	  $10^{-5}$ 	& $14$     & $9.20e-07$ & $3.74e-07$    \\
      $10^{-4}$ 	& $14$     & $9.20e-07$ & $3.74e-07$    \\
      $10^{-3}$ 	& $14$     & $9.20e-07$ & $3.73e-07$    \\
      $10^{-2}$ 	& $14$     & $9.14e-07$ & $3.63e-07$    \\
      $10^{-1}$ 	& $14$     & $8.79e-07$ & $2.86e-07$    \\
      $1$     		& $13$     & $1.09e-06$ & $1.50e-07$   	\\   
      \hline
    \end{tabular}
  \end{subtable}
  \end{adjustbox}
  \caption{$2$-$d$ ASP reconditioning with Gauss-Seidel smoothing: \CG iterations, residual and $l^2$ approximation errors. Exact solutions are given by \eqref{eq:curl-exact-sol-2}--\eqref{eq:div-exact-sol-2}. Parameter values are set to $n=32$, $p = 3$.}
  \label{tab:cgn-res-err-GS}
\end{table}
\begin{table}[H]
\begin{adjustbox}{width=1.\textwidth}
\begin{subtable}{0.4\textwidth} 
    \centering
\begin{tabular}{|c|c|c|c|}
\hline
 $p$   & NP & J   & GS     \\
\hline
\hline
 $1$   & $253$ 	& $23 \, (8)$       & $16 \, (7)$                        \\
 $2$   & $1120$ & $20 \, (6)$     	& $16 \, (6)$                         \\
 $3$   & $1896$ & $21 \, (6)$     	& $15 \, (5)$                         \\
 $4$   & $-$ 	& $25 \, (6)$     	& $15 \, (5)$                      \\
 $5$   & $-$ 	& $29 \, (6)$     	& $15 \, (5)$                       \\
 $6$   & $-$ 	& $33 \, (6)$     	& $17 \, (5)$                      \\
 $7$   & $-$ 	& $35 \, (6)$     	& $20 \, (4)$                      \\
 $8$   & $-$ 	& $37 \, (6)$     	& $22 \, (4)$                   \\
 $9$   & $-$ 	& $36 \, (6)$     	& $24 \, (4)$                      \\
 $10$  & $-$ 	& $37 \, (6)$     	& $23 \, (4)$                    \\
\hline
\end{tabular}
\subcaption{$\bm{H}_0(\bm{curl}, \Omega)$ problem.}
\end{subtable}%
\begin{subtable}{0.4\textwidth} 
    \centering
\begin{tabular}{|c|c|c|c|c|c|}
\hline
 $p$   & NP & J   & GS    \\
\hline
\hline
 $1$   & $245$  & $23 \, (8)$     & $17 \, (7)$                      \\
 $2$   & $1120$ & $20 \, (6)$     & $16 \, (6)$                      \\
 $3$   & $1595$ & $21 \, (6)$     & $16 \, (5)$                       \\
 $4$   & $-$ 	& $25 \, (6)$     & $15 \, (5)$                    \\
 $5$   & $-$ 	& $29 \, (6)$     & $16 \, (5)$                     \\
 $6$   & $-$ 	& $33 \, (6)$     & $18 \, (5)$                    \\
 $7$   & $-$ 	& $35 \, (6)$     & $22 \, (4)$                     \\
 $8$   & $-$ 	& $37 \, (6)$     & $24 \, (4)$                     \\
 $9$   & $-$ 	& $36 \, (6)$     & $24 \, (4)$                      \\
 $10$  & $-$ 	& $37 \, (6)$     & $24 \, (4)$                      \\
\hline
\end{tabular}
\subcaption{$\bm{H}_0(div, \Omega)$ problem.}
\end{subtable}
\end{adjustbox}
\caption{The $2$-$d$ problem: \CG iterations in the case of the  unpreconditioned problem (NP), ASP preconditioning with Jacobi smoothing (J), ASP preconditioning with Gauss-Seidel smoothing (GS), and the optimal ASP algorithm with GLT smoothing (ASP-GLT). The number of iterations required for ASP-GLT is indicated in parentheses. The exact solutions are defined by \eqref{eq:curl-exact-sol-1}-\eqref{eq:div-exact-sol-1}. Parameter values are set to $\tau=10^{-4}$, $n=64$, $\nu_1=1$, $\nu_2=p^2$, and $\nu_{asp}=3$. ($-$) indicates that \CG reached the maximum number of iterations (set to $3000$) without achieving convergence.}
\label{tab:cgn-glt}  
\end{table}
\begin{table}[H]
\begin{adjustbox}{width=1.\textwidth}
\begin{subtable}{1.\textwidth} 
    \centering 
    \hspace*{-2. cm}
\begin{tabular}{c p{0.cm} p{1.cm} p{1.7cm} p{1.7cm} p{1.7cm}  c p{1.7cm} p{1.7cm} p{1.7cm} p{1.7cm}}
\hline
 &  & \multicolumn{4}{l}{$p=1$} &  & \multicolumn{4}{l}{$p=2$}  \\ \cline{3-6} \cline{8-11}
\diaghead{taun-----}{$\tau$}{$n$} &  & $8$   & $16$   & $32$   & $64$    & & $8$   & $16$   & $32$   & $64$ \\ \hline
$10^{-4}$    & & $151$  & $328$   & $511$    & $879$   
             & & $520$  & $975$   & $1313$   & $1962$ \\
$10^{-3}$    & & $127$  & $271$   & $439$    & $749$   
             & & $452$  & $856$   & $1092$   & $1550$ \\
$10^{-2}$    & & $114$  & $192$   & $350$    & $610$   
             & & $366$  & $679$   & $892$    & $1324$ \\
$10^{-1}$    & & $81$   & $174$   & $276$    & $486$   
             & & $279$  & $533$   & $703$    & $1029$\\
$1$          & & $59$   & $109$   & $188$    & $295$   
             & & $188$  & $367$   & $518$    & $717$\\
$10^{1}$     & & $37$   & $70$    & $133$    & $244$   
             & & $112$  & $226$   & $308$    & $432$\\
$10^{2}$     & & $21$   & $38$    & $72$     & $131$   
             & & $48$   & $78$    & $122$    & $182$\\
$10^{3}$     & & $11$   & $12$    & $21$     & $41$    
             & & $44$   & $41$    & $39$     & $52$\\
$10^{4}$     & & $11$   & $11$    & $7$      & $10$    
             & & $47$   & $52$    & $41$     & $30$     
\end{tabular}
\end{subtable}
\end{adjustbox}

\vspace{.5cm}

\begin{adjustbox}{width=1.\textwidth}
\begin{subtable}{1.\textwidth} 
    \centering 
    \hspace*{-2. cm}
\begin{tabular}{c p{0.cm} p{1.cm} p{1.7cm} p{1.7cm} p{1.7cm}  c p{1.7cm} p{1.7cm} p{1.7cm} p{1.7cm}}
\hline
 &  & \multicolumn{4}{l}{$p=3$} &  & \multicolumn{4}{l}{$p=4$}  \\ \cline{3-6} \cline{8-11}
\diaghead{taun-----}{$\tau$}{$n$} &  & $8$   & $16$   & $32$   & $64$    & & $8$   & $16$   & $32$   & $64$ \\ \hline

$10^{-4}$    & & $-$    & $-$     & $-$      & $-$ 
			 & & $-$    & $-$     & $-$      & $-$ \\
$10^{-3}$    & & $-$    & $-$     & $-$      & $-$ 
             & & $-$    & $-$     & $-$      & $-$ \\
$10^{-2}$    & & $2301$ & $-$     & $-$      & $-$ 
             & & $-$    & $-$     & $-$      & $-$ \\
$10^{-1}$    & & $1579$ & $2763$  & $-$      & $-$ 
             & & $-$    & $-$     & $-$      & $-$ \\
$1$          & & $991$  & $1570$  & $1840$   & $2277$
             & & $1471$ & $-$     & $-$      & $-$ \\
$10^{1}$     & & $303$  & $534$   & $764$    & $1149$ 
             & & $510$  & $867$   & $1463$   & $2091$ \\
$10^{2}$     & & $113$  & $150$   & $219$    & $362$ 
             & & $154$  & $277$   & $432$    & $548$ \\
$10^{3}$     & & $88$   & $60$    & $64$     & $102$ 
             & & $116$  & $110$   & $122$    & $172$\\
$10^{4}$     & & $129$  & $109$   & $61$     & $39$ 
             & & $195$  & $206$   & $128$    & $74$     
\end{tabular}
\end{subtable}
\end{adjustbox}

\vspace{.5cm}

\begin{adjustbox}{width=1.\textwidth}
\begin{subtable}{1.\textwidth} 
    \centering 
    \hspace*{-2. cm}
\begin{tabular}{c p{0.cm} p{1.cm} p{1.7cm} p{1.7cm} p{1.7cm}  c p{1.7cm} p{1.7cm} p{1.7cm} p{1.7cm}}
\hline
 &  & \multicolumn{4}{l}{$p=5$} &  & \multicolumn{4}{l}{$p=6$}  \\ \cline{3-6} \cline{8-11}
\diaghead{taun-----}{$\tau$}{$n$} &  & $8$   & $16$   & $32$   & $64$    & & $8$   & $16$   & $32$   & $64$ \\ \hline

$10^{-4}$    & & $-$  	 & $-$   	& $-$   	& $-$   
             & & $-$  	 & $-$   	& $-$   	& $-$ \\
$10^{-3}$    & & $-$  	 & $-$   	& $-$   	& $-$   
             & & $-$  	 & $-$   	& $-$   	& $-$ \\
$10^{-2}$    & & $-$  	 & $-$   	& $-$   	& $-$   
             & & $-$  	 & $-$   	& $-$   	& $-$ \\
$10^{-1}$    & & $-$  	 & $-$   	& $-$   	& $-$   
             & & $-$  	 & $-$   	& $-$   	& $-$ \\
$1$          & & $-$     & $-$   	& $-$   	& $-$   
             & & $-$     & $-$   	& $-$   	& $-$ \\
$10^{1}$     & & $1187$  & $1692$   & $2200$   	& $-$   
             & & $1325$  & $2252$   & $2838$   	& $-$ \\
$10^{2}$     & & $359$   & $414$    & $625$     & $786$   
             & & $384$   & $562$    & $771$     & $1013$ \\
$10^{3}$     & & $300$   & $206$    & $184$     & $228$    
             & & $327$   & $363$    & $267$     & $348$ \\
$10^{4}$     & & $542$   & $431$    & $260$     & $139$    
             & & $625$   & $764$    & $403$     & $201$    
\end{tabular}
\end{subtable}
\end{adjustbox}
\caption{$3$-$d$ unpreconditioned $\bm{H}_0(\bm{curl}, \Omega)$: \CG iterations. The right-hand side function is defined by \eqref{eq:3d-curl-right-hand-side}. '$-$' indicates that \CG reached the maximum number of iterations (set to $3000$) without convergence.}  
  \label{tab:3d-curl-cgn-none}  
\end{table}

To test the algorithm's convergence to exact solutions, we consider analytic solutions \eqref{eq:curl-exact-sol-2}--\eqref{eq:div-exact-sol-2}, as we did for the unpreconditioned problem. We track the number of iterations, residual error, and $l^2$ relative error after the \CG method converges for different choices of $\tau$ with fixed values of $n=32$ and $p=3$. We present the results in Tables \ref{tab:cgn-res-err-J}--\ref{tab:cgn-res-err-GS}.

\begin{table}[H]

\begin{adjustbox}{width=1.\textwidth}
\begin{subtable}{1.\textwidth} 
    \centering 
    \hspace*{-1.5 cm}
\begin{tabular}{c p{0.cm} p{.4cm}p{.4cm} p{0.cm} p{.4cm}p{.4cm} p{0.cm} p{.4cm}p{.4cm} p{0.cm} p{.4cm}p{.4cm}p{0.cm} p{.4cm}p{.4cm} p{0.cm} p{.4cm}p{.4cm} p{0.cm} p{.4cm}p{.4cm} p{0.cm} p{.4cm}p{.4cm}c p{0.cm} cc p{0.cm} cc p{0.cm} cc p{0.cm} ccp{0.cm} cc p{0.cm} cc p{0.cm} cc p{0.cm} cc}
\cline{2-25}
\multirow{2}{*}{} &                      & \multicolumn{11}{l}{$p=1$}                                                                                        &  & \multicolumn{11}{l}{$p=2$}   \\   
\cline{3-13} \cline{15-25} 
                 \multirow{2}{*}{\diagbox[innerwidth=0.8cm]{\hspace*{0.15cm}$\tau$}{$n$}}   & \multicolumn{1}{c}{} & \multicolumn{2}{l}{$8$} &  & \multicolumn{2}{l}{$16$} &  & \multicolumn{2}{l}{$32$} &  & \multicolumn{2}{l}{$64$} &  & \multicolumn{2}{l}{$8$} &  & \multicolumn{2}{l}{$16$} &  & \multicolumn{2}{l}{$32$} &  & \multicolumn{2}{l}{$64$} \\ \cline{3-4} \cline{6-7} \cline{9-10} \cline{12-13} \cline{15-16} \cline{18-19} \cline{21-22} \cline{24-25} 
           &                      & J         & GS           &  & J          & GS           &  & J          & GS           &  & J          & GS           &  & J         & GS           &  & J          & GS           &  & J          & GS           &  & J          & GS           \\ \hline

$10^{-4}$ &&    $4$ & $3$     &&     $6$ & $5$      &&       $7$ & $6$       &&      $8$ & $7$        
          &&    $2$ & $2$     &&     $4$ & $3$      &&       $5$ & $4$       &&      $6$ & $6$          \\
$10^{-3}$ &&    $4$ & $3$     &&     $6$ & $5$      &&       $7$ & $6$       &&      $8$ & $7$          
          &&    $2$ & $2$     &&     $4$ & $3$      &&       $5$ & $4$       &&      $6$ & $6$          \\
$10^{-2}$ &&    $4$ & $3$     &&     $6$ & $5$      &&       $7$ & $6$       &&      $8$ & $7$          
          &&    $2$ & $2$     &&     $4$ & $3$      &&       $5$ & $4$       &&      $6$ & $5$          \\
$10^{-1}$ &&    $4$ & $3$     &&     $6$ & $5$      &&       $7$ & $6$       &&      $8$ & $7$         
          &&    $2$ & $2$     &&     $3$ & $3$      &&       $4$ & $4$       &&      $5$ & $5$          \\
$1$       &&    $4$ & $3$     &&     $6$ & $5$      &&       $7$ & $6$       &&      $8$ & $7$          
          &&    $2$ & $2$     &&     $3$ & $3$      &&       $4$ & $4$       &&      $6$ & $5$          \\
$10$      &&    $4$ & $3$     &&     $5$ & $4$      &&       $7$ & $6$       &&      $7$ & $7$          
          &&    $2$ & $2$     &&     $3$ & $2$      &&       $4$ & $3$       &&      $5$ & $5$          \\
$10^2$    &&    $3$ & $2$     &&     $5$ & $3$      &&       $6$ & $5$       &&      $7$ & $6$         
          &&    $2$ & $1$     &&     $2$ & $2$      &&       $3$ & $3$       &&      $5$ & $4$          \\
$10^3$    &&    $2$ & $1$     &&     $2$ & $1$      &&       $5$ & $3$       &&      $7$ & $4$          
          &&    $1$ & $1$     &&     $2$ & $2$      &&       $2$ & $2$       &&      $2$ & $2$          \\
$10^4$    &&    $1$ & $1$     &&     $2$ & $1$      &&       $2$ & $1$       &&      $3$ & $2$ 
          &&    $1$ & $1$     &&     $1$ & $1$      &&       $1$ & $1$       &&      $2$ & $1$         
\end{tabular}
\end{subtable}
\end{adjustbox}

\vspace{.5cm}

\begin{adjustbox}{width=1.\textwidth}
\begin{subtable}{1.\textwidth} 
    \centering 
    \hspace*{-1.5 cm}
\begin{tabular}{c p{0.cm} p{.4cm}p{.4cm} p{0.cm} p{.4cm}p{.4cm} p{0.cm} p{.4cm}p{.4cm} p{0.cm} p{.4cm}p{.4cm}p{0.cm} p{.4cm}p{.4cm} p{0.cm} p{.4cm}p{.4cm} p{0.cm} p{.4cm}p{.4cm} p{0.cm} p{.4cm}p{.4cm}c p{0.cm} cc p{0.cm} cc p{0.cm} cc p{0.cm} ccp{0.cm} cc p{0.cm} cc p{0.cm} cc p{0.cm} cc}
\cline{2-25}
\multirow{2}{*}{} &                      & \multicolumn{11}{l}{$p=3$}                                                                                        &  & \multicolumn{11}{l}{$p=4$}   \\   
\cline{3-13} \cline{15-25} 
                 \multirow{2}{*}{\diagbox[innerwidth=0.8cm]{\hspace*{0.15cm}$\tau$}{$n$}}   & \multicolumn{1}{c}{} & \multicolumn{2}{l}{$8$} &  & \multicolumn{2}{l}{$16$} &  & \multicolumn{2}{l}{$32$} &  & \multicolumn{2}{l}{$64$} &  & \multicolumn{2}{l}{$8$} &  & \multicolumn{2}{l}{$16$} &  & \multicolumn{2}{l}{$32$} &  & \multicolumn{2}{l}{$64$} \\ \cline{3-4} \cline{6-7} \cline{9-10} \cline{12-13} \cline{15-16} \cline{18-19} \cline{21-22} \cline{24-25} 
           &                      & J         & GS           &  & J          & GS           &  & J          & GS           &  & J          & GS           &  & J         & GS           &  & J          & GS           &  & J          & GS           &  & J          & GS           \\ \hline

$10^{-4}$ &&    $2$ & $2$     &&     $3$ & $2$      &&       $4$ & $3$       &&      $5$ & $5$        
          &&    $3$ & $2$     &&     $3$ & $2$      &&       $3$ & $3$       &&      $5$ & $4$          \\
$10^{-3}$ &&    $2$ & $2$     &&     $3$ & $2$      &&       $4$ & $3$       &&      $5$ & $5$          
          &&    $3$ & $2$     &&     $3$ & $2$      &&       $3$ & $3$       &&      $4$ & $4$           \\
$10^{-2}$ &&    $2$ & $2$     &&     $3$ & $2$      &&       $4$ & $3$       &&      $5$ & $5$          
          &&    $3$ & $2$     &&     $3$ & $2$      &&       $3$ & $3$       &&      $4$ & $4$           \\
$10^{-1}$ &&    $2$ & $2$     &&     $3$ & $2$      &&       $4$ & $3$       &&      $5$ & $4$         
          &&    $3$ & $2$     &&     $3$ & $2$      &&       $3$ & $3$       &&      $5$ & $4$           \\
$1$       &&    $2$ & $2$     &&     $3$ & $2$      &&       $4$ & $4$       &&      $6$ & $5$          
          &&    $2$ & $2$     &&     $3$ & $2$      &&       $3$ & $3$       &&      $5$ & $4$           \\
$10$      &&    $2$ & $2$     &&     $2$ & $2$      &&       $3$ & $3$       &&      $5$ & $5$          
          &&    $2$ & $2$     &&     $2$ & $2$      &&       $3$ & $3$       &&      $4$ & $3$           \\
$10^2$    &&    $2$ & $1$     &&     $2$ & $2$      &&       $2$ & $2$       &&      $3$ & $3$         
          &&    $2$ & $2$     &&     $2$ & $2$      &&       $2$ & $2$       &&      $3$ & $3$           \\
$10^3$    &&    $1$ & $1$     &&     $2$ & $2$      &&       $2$ & $2$       &&      $2$ & $2$          
          &&    $1$ & $2$     &&     $2$ & $2$      &&       $2$ & $2$       &&      $2$ & $2$           \\
$10^4$    &&    $1$ & $1$     &&     $1$ & $1$      &&       $1$ & $1$       &&      $2$ & $2$ 
          &&    $1$ & $1$     &&     $1$ & $1$      &&       $2$ & $2$       &&      $2$ & $2$     
\end{tabular}
\end{subtable}
\end{adjustbox}

\vspace{.5cm}

\begin{adjustbox}{width=1.\textwidth}
\begin{subtable}{1.\textwidth} 
    \centering 
    \hspace*{-1.5 cm}
\begin{tabular}{c p{0.cm} p{.4cm}p{.4cm} p{0.cm} p{.4cm}p{.4cm} p{0.cm} p{.4cm}p{.4cm} p{0.cm} p{.4cm}p{.4cm}p{0.cm} p{.4cm}p{.4cm} p{0.cm} p{.4cm}p{.4cm} p{0.cm} p{.4cm}p{.4cm} p{0.cm} p{.4cm}p{.4cm}c p{0.cm} cc p{0.cm} cc p{0.cm} cc p{0.cm} ccp{0.cm} cc p{0.cm} cc p{0.cm} cc p{0.cm} cc}
\cline{2-25}
\multirow{2}{*}{} &                      & \multicolumn{11}{l}{$p=5$}                                                                                        &  & \multicolumn{11}{l}{$p=6$}   \\   
\cline{3-13} \cline{15-25} 
                 \multirow{2}{*}{\diagbox[innerwidth=0.8cm]{\hspace*{0.15cm}$\tau$}{$n$}}   & \multicolumn{1}{c}{} & \multicolumn{2}{l}{$8$} &  & \multicolumn{2}{l}{$16$} &  & \multicolumn{2}{l}{$32$} &  & \multicolumn{2}{l}{$64$} &  & \multicolumn{2}{l}{$8$} &  & \multicolumn{2}{l}{$16$} &  & \multicolumn{2}{l}{$32$} &  & \multicolumn{2}{l}{$64$} \\ \cline{3-4} \cline{6-7} \cline{9-10} \cline{12-13} \cline{15-16} \cline{18-19} \cline{21-22} \cline{24-25} 
           &                      & J         & GS           &  & J          & GS           &  & J          & GS           &  & J          & GS           &  & J         & GS           &  & J          & GS           &  & J          & GS           &  & J          & GS           \\ \hline

$10^{-4}$ &&    $3$ & $2$     &&     $3$ & $2$      &&       $3$ & $3$       &&      $5$ & $4$        
          &&    $4$ & $2$     &&     $5$ & $2$      &&       $4$ & $3$       &&      $5$ & $4$           \\
$10^{-3}$ &&    $4$ & $2$     &&     $3$ & $2$      &&       $3$ & $3$       &&      $5$ & $4$          
          &&    $4$ & $2$     &&     $4$ & $2$      &&       $4$ & $3$       &&      $5$ & $4$           \\
$10^{-2}$ &&    $4$ & $2$     &&     $4$ & $2$      &&       $3$ & $3$       &&      $5$ & $4$          
          &&    $4$ & $2$     &&     $4$ & $3$      &&       $4$ & $3$       &&      $5$ & $4$         \\
$10^{-1}$ &&    $3$ & $2$     &&     $4$ & $2$      &&       $4$ & $3$       &&      $5$ & $4$         
          &&    $4$ & $2$     &&     $4$ & $3$      &&       $4$ & $3$       &&      $5$ & $4$           \\
$1$       &&    $3$ & $2$     &&     $3$ & $2$      &&       $4$ & $3$       &&      $4$ & $4$          
          &&    $3$ & $2$     &&     $4$ & $3$      &&       $4$ & $3$       &&      $5$ & $4$           \\
$10$      &&    $2$ & $2$     &&     $3$ & $2$      &&       $3$ & $3$       &&      $4$ & $3$          
          &&    $3$ & $2$     &&     $3$ & $2$      &&       $4$ & $3$       &&      $5$ & $4$           \\
$10^2$    &&    $2$ & $2$     &&     $2$ & $2$      &&       $3$ & $2$       &&      $3$ & $3$         
          &&    $2$ & $2$     &&     $3$ & $2$      &&       $3$ & $2$       &&      $4$ & $3$           \\
$10^3$    &&    $2$ & $2$     &&     $2$ & $2$      &&       $2$ & $2$       &&      $3$ & $2$          
          &&    $2$ & $2$     &&     $2$ & $2$      &&       $3$ & $2$       &&      $3$ & $2$           \\
$10^4$    &&    $1$ & $1$     &&     $1$ & $2$      &&       $2$ & $2$       &&      $2$ & $2$ 
          &&    $1$ & $1$     &&     $1$ & $2$      &&       $2$ & $2$       &&      $3$ & $2$       
\end{tabular}
\end{subtable}
\end{adjustbox}
\caption{$3$-$d$ ASP-GLT preconditioning for $\bm{H}_0(\bm{curl}, \Omega)$: \CG iterations with Jacobi (J) and Gauss-Seidel (GS) smoothing. The right-hand side function is defined by \eqref{eq:3d-curl-right-hand-side}. Parameter values are set to $\nu_1=1$, $\nu_2=p^3$, and $\nu_{asp}=3$.}  
  \label{tab:3d-curl-cgn-glt}  
\end{table}

Comparing the above results to those of the previous subsection, several  observations can be made. 
\begin{itemize}
\item[-] the spectral condition numbers and the number of conjugate gradient iterations required for convergence are relatively small and not heavily dependent on the mesh parameter $h$.

\bigskip

\item[-] both the number of \CG iterations and the spectral condition numbers appear to be independent of $\tau$, indicating that the ASP methodology can handle small values of $\tau$ effectively

\bigskip

\item[-] the relative errors are now sufficiently small and are of the same order as the corresponding residual errors, demonstrating that the solution obtained converges to the corresponding exact solution.
\bigskip

\item[-] it can be observed that the overall performance obtained with Gauss-Seidel smoothing is slightly better than that obtained with Jacobi smoothing scheme.
\end{itemize}

\begin{table}[H]

\begin{adjustbox}{width=1.\textwidth}
\begin{subtable}{1.\textwidth} 
    \centering 
    \hspace*{-2. cm}
\begin{tabular}{c p{0.cm} p{.5cm}p{.5cm} p{0.cm} p{.5cm}p{.5cm} p{0.cm} p{.5cm}p{.5cm} p{0.cm} p{.5cm}p{.5cm}p{0.cm} p{.5cm}p{.5cm} p{0.cm} p{.5cm}p{.5cm} p{0.cm} p{.5cm}p{.5cm} p{0.cm} p{.5cm}p{.5cm}}
\cline{2-25}
\multirow{2}{*}{} &                      & \multicolumn{11}{l}{$p=1$}                                                                                        &  & \multicolumn{11}{l}{$p=2$}   \\   
\cline{3-13} \cline{15-25} 
                 \multirow{2}{*}{\diagbox[innerwidth=0.8cm]{\hspace*{0.15cm}$\tau$}{$n$}}   & \multicolumn{1}{c}{} & \multicolumn{2}{l}{$8$} &  & \multicolumn{2}{l}{$16$} &  & \multicolumn{2}{l}{$32$} &  & \multicolumn{2}{l}{$64$} &  & \multicolumn{2}{l}{$8$} &  & \multicolumn{2}{l}{$16$} &  & \multicolumn{2}{l}{$32$} &  & \multicolumn{2}{l}{$64$} \\ \cline{3-4} \cline{6-7} \cline{9-10} \cline{12-13} \cline{15-16} \cline{18-19} \cline{21-22} \cline{24-25} 
           &                      & J         & GS           &  & J          & GS           &  & J          & GS           &  & J          & GS           &  & J         & GS           &  & J          & GS           &  & J          & GS           &  & J          & GS           \\ \hline

$10^{-4}$ &&     $22$ & $12$      &&      $26$ & $17$       &&        $28$ & $19$        &&       $28$ & $22$         
          &&     $10$ & $13$      &&      $16$ & $13$       &&        $24$ & $20$        &&       $27$ & $23$           \\
$10^{-3}$ &&     $21$ & $12$      &&      $25$ & $18$       &&        $28$ & $19$        &&       $29$ & $23$           
          &&     $11$ & $13$      &&      $18$ & $16$       &&        $26$ & $21$        &&       $29$ & $23$           \\
$10^{-2}$ &&     $21$ & $13$      &&      $25$ & $18$       &&        $29$ & $20$        &&       $30$ & $22$           
          &&     $12$ & $14$      &&      $20$ & $18$       &&        $27$ & $20$        &&       $31$ & $24$           \\
$10^{-1}$ &&     $20$ & $13$      &&      $25$ & $19$       &&        $28$ & $22$        &&       $29$ & $23$          
          &&     $14$ & $15$      &&      $20$ & $20$       &&        $28$ & $22$        &&       $29$ & $25$           \\
$1$       &&     $20$ & $14$      &&      $25$ & $19$       &&        $26$ & $23$        &&       $28$ & $24$           
          &&     $14$ & $15$      &&      $20$ & $21$       &&        $27$ & $24$        &&       $31$ & $27$           \\
$10$      &&     $16$ & $11$      &&      $20$ & $15$       &&        $24$ & $19$        &&       $26$ & $23$           
          &&     $4$  & $5$       &&      $13$ & $14$       &&        $22$ & $21$        &&       $27$ & $25$           \\
$10^2$    &&     $6$  & $4$       &&      $13$ & $7$        &&        $18$ & $12$        &&       $21$ & $17$          
          &&     $2$  & $2$       &&      $3$  & $2$        &&        $7$  & $7$         &&       $15$ & $15$           \\
$10^3$    &&     $2$  & $1$       &&      $3$  & $2$        &&        $8$  & $5$         &&       $14$ & $8$           
          &&     $1$  & $1$       &&      $1$  & $1$        &&        $2$  & $2$         &&       $3$  & $3$           \\
$10^4$    &&     $1$  & $1$       &&      $1$  & $1$        &&        $2$  & $1$         &&       $4$  & $2$  
          &&     $1$  & $1$       &&      $1$  & $1$        &&        $1$  & $1$         &&       $1$  & $1$          
\end{tabular}
\end{subtable}
\end{adjustbox}

\vspace{.5cm}

\begin{adjustbox}{width=1.\textwidth}
\begin{subtable}{1.\textwidth} 
    \centering 
    \hspace*{-2. cm}
\begin{tabular}{c p{0.cm} p{.5cm}p{.5cm} p{0.cm} p{.5cm}p{.5cm} p{0.cm} p{.5cm}p{.5cm} p{0.cm} p{.5cm}p{.5cm}p{0.cm} p{.5cm}p{.5cm} p{0.cm} p{.5cm}p{.5cm} p{0.cm} p{.5cm}p{.5cm} p{0.cm} p{.5cm}p{.5cm}}
\cline{2-25}
\multirow{2}{*}{} &                      & \multicolumn{11}{l}{$p=3$}                                                                                        &  & \multicolumn{11}{l}{$p=4$}   \\   
\cline{3-13} \cline{15-25} 
                 \multirow{2}{*}{\diagbox[innerwidth=0.8cm]{\hspace*{0.15cm}$\tau$}{$n$}}   & \multicolumn{1}{c}{} & \multicolumn{2}{l}{$8$} &  & \multicolumn{2}{l}{$16$} &  & \multicolumn{2}{l}{$32$} &  & \multicolumn{2}{l}{$64$} &  & \multicolumn{2}{l}{$8$} &  & \multicolumn{2}{l}{$16$} &  & \multicolumn{2}{l}{$32$} &  & \multicolumn{2}{l}{$64$} \\ \cline{3-4} \cline{6-7} \cline{9-10} \cline{12-13} \cline{15-16} \cline{18-19} \cline{21-22} \cline{24-25} 
           &                      & J         & GS           &  & J          & GS           &  & J          & GS           &  & J          & GS           &  & J         & GS           &  & J          & GS           &  & J          & GS           &  & J          & GS           \\ \hline

$10^{-4}$ &&     $16$ & $21$      &&      $25$ & $24$       &&        $31$ & $31$        &&       $43$ & $36$         
          &&     $26$ & $41$      &&      $38$ & $34$       &&        $44$ & $40$        &&       $70$ & $55$           \\
$10^{-3}$ &&     $18$ & $23$      &&      $22$ & $28$       &&        $33$ & $29$        &&       $47$ & $40$           
          &&     $31$ & $42$      &&      $42$ & $40$       &&        $51$ & $44$        &&       $75$ & $60$            \\
$10^{-2}$ &&     $23$ & $24$      &&      $23$ & $23$       &&        $31$ & $33$        &&       $43$ & $43$           
          &&     $48$ & $48$      &&      $33$ & $49$       &&        $54$ & $51$        &&       $72$ & $67$            \\
$10^{-1}$ &&     $21$ & $32$      &&      $30$ & $28$       &&        $35$ & $37$        &&       $47$ & $40$          
          &&     $4$  & $12$      &&      $41$ & $61$       &&        $48$ & $57$        &&       $71$ & $67$            \\
$1$       &&     $5$  & $15$      &&      $26$ & $28$       &&        $35$ & $37$        &&       $43$ & $44$           
          &&     $2$  & $3$       &&      $7$  & $12$       &&        $46$ & $57$        &&       $79$ & $65$            \\
$10$      &&     $2$  & $2$       &&      $3$  & $3$        &&        $9$  & $10$        &&       $35$ & $34$           
          &&     $2$  & $2$       &&      $3$  & $3$        &&        $4$  & $4$         &&       $26$ & $14$            \\
$10^2$    &&     $2$  & $2$       &&      $2$  & $2$        &&        $2$  & $2$         &&       $4$  & $3$          
          &&     $2$  & $2$       &&      $2$  & $2$        &&        $2$  & $2$         &&       $3$  & $3$            \\
$10^3$    &&     $1$  & $1$       &&      $1$  & $1$        &&        $2$  & $2$         &&       $2$  & $2$           
          &&     $1$  & $1$       &&      $2$  & $2$        &&        $2$  & $2$         &&       $2$  & $2$            \\
$10^4$    &&     $1$  & $1$       &&      $1$  & $1$        &&        $1$  & $1$         &&       $1$  & $1$  
          &&     $1$  & $1$       &&      $1$  & $1$        &&        $1$  & $1$         &&       $2$  & $1$      
\end{tabular}
\end{subtable}
\end{adjustbox}

\vspace{.5cm}

\begin{adjustbox}{width=1.\textwidth}
\begin{subtable}{1.\textwidth} 
    \centering 
    \hspace*{-2. cm}
\begin{tabular}{c p{0.cm} p{.5cm}p{.5cm} p{0.cm} p{.5cm}p{.5cm} p{0.cm} p{.5cm}p{.5cm} p{0.cm} p{.5cm}p{.5cm}p{0.cm} p{.5cm}p{.5cm} p{0.cm} p{.5cm}p{.5cm} p{0.cm} p{.5cm}p{.5cm} p{0.cm} p{.5cm}p{.5cm}}
\cline{2-25}
\multirow{2}{*}{} &                      & \multicolumn{11}{l}{$p=5$}                                                                                        &  & \multicolumn{11}{l}{$p=6$}   \\   
\cline{3-13} \cline{15-25} 
                 \multirow{2}{*}{\diagbox[innerwidth=0.8cm]{\hspace*{0.15cm}$\tau$}{$n$}}   & \multicolumn{1}{c}{} & \multicolumn{2}{l}{$8$} &  & \multicolumn{2}{l}{$16$} &  & \multicolumn{2}{l}{$32$} &  & \multicolumn{2}{l}{$64$} &  & \multicolumn{2}{l}{$8$} &  & \multicolumn{2}{l}{$16$} &  & \multicolumn{2}{l}{$32$} &  & \multicolumn{2}{l}{$64$} \\ \cline{3-4} \cline{6-7} \cline{9-10} \cline{12-13} \cline{15-16} \cline{18-19} \cline{21-22} \cline{24-25} 
           &                      & J         & GS           &  & J          & GS           &  & J          & GS           &  & J          & GS           &  & J         & GS           &  & J          & GS           &  & J          & GS           &  & J          & GS           \\ \hline

$10^{-4}$ &&     $40$  & $50$      &&      $55$  & $65$       &&        $79$  & $72$        &&       $97$  & $72$        
          &&     $99$  & $63$      &&      $90$  & $72$       &&        $96$  & $82$        &&       $146$ & $116$           \\
$10^{-3}$ &&     $68$  & $49$      &&      $72$  & $68$       &&        $88$  & $83$        &&       $103$ & $81$           
          &&     $101$ & $96$      &&      $103$ & $73$       &&        $107$ & $88$        &&       $153$ & $125$            \\
$10^{-2}$ &&     $59$  & $77$      &&      $65$  & $67$       &&        $90$  & $90$        &&       $116$ & $92$           
          &&     $164$ & $99$      &&      $113$ & $121$      &&        $127$ & $106$       &&       $147$ & $127$          \\
$10^{-1}$ &&     $5$   & $5$       &&      $13$  & $22$       &&        $79$  & $84$        &&       $97$  & $105$          
          &&     $7$   & $6$       &&      $11$  & $10$       &&        $123$ & $141$       &&       $152$ & $129$            \\
$1$       &&     $4$   & $4$       &&      $5$   & $5$        &&        $16$  & $26$        &&       $112$ & $88$           
          &&     $4$   & $4$       &&      $6$   & $5$        &&        $10$  & $12$        &&       $367$ & $126$            \\
$10$      &&     $2$   & $3$       &&      $3$   & $3$        &&        $3$   & $4$         &&       $9$   & $7$           
          &&     $3$   & $3$       &&      $4$   & $4$        &&        $5$   & $5$         &&       $9$   & $7$            \\
$10^2$    &&     $2$   & $2$       &&      $2$   & $2$        &&        $3$   & $3$         &&       $3$   & $3$          
          &&     $2$   & $2$       &&      $2$   & $2$        &&        $3$   & $3$         &&       $4$   & $3$            \\
$10^3$    &&     $2$   & $1$       &&      $2$   & $2$        &&        $2$   & $2$         &&       $2$   & $2$           
          &&     $2$   & $2$       &&      $2$   & $2$        &&        $2$   & $2$         &&       $2$   & $3$            \\
$10^4$    &&     $1$   & $1$       &&      $1$   & $1$        &&        $1$   & $1$         &&       $2$   & $2$  
          &&     $1$   & $1$       &&      $1$   & $1$        &&        $2$   & $1$         &&       $2$   & $2$        
\end{tabular}
\end{subtable}
\end{adjustbox}
\caption{$3$-$d$ ASP-GLT preconditioning for $\bm{H}_0(div, \Omega)$: \CG iterations with Jacobi (J) and Gauss-Seidel (GS) smoothing. The right-hand side function is defined by \eqref{eq:3d-div-right-hand-side}. Parameter values are set to $\nu_1=1$, $\nu_2=p^3$, and $\nu_{asp}=3$.}
\label{tab:3d-div-cgn-glt}  
\end{table}

In conclusion, Tables \ref{tab:curl-cn-jacobi}--\ref{tab:cgn-res-err-GS} present results that compare favorably with those of Subsection \ref{sec:none}, indicating that the preconditioning method outlined in Section \ref{sec:asp} has been an effective solution to the ill-preconditioning of the approximate problem. However, it is important to note that the numerical results deteriorate when $p$ is large, specially in the case of $\bm{H}_0(div, \Omega)$ problem. Although this $p$-dependency is not addressed in the theoretical results presented in this paper, in the next subsection we propose a numerical investigation.

\begin{table}[H]

\begin{adjustbox}{width=1.\textwidth}
\begin{subtable}{1.\textwidth} 
    \centering 
    \hspace*{-2. cm}
\begin{tabular}{c p{0.cm} p{.5cm}p{.5cm} p{0.cm} p{.5cm}p{.5cm} p{0.cm} p{.5cm}p{.5cm} p{0.cm} p{.5cm}p{.5cm}p{0.cm} p{.5cm}p{.5cm} p{0.cm} p{.5cm}p{.5cm} p{0.cm} p{.5cm}p{.5cm} p{0.cm} p{.5cm}p{.5cm}}
\cline{2-25}
\multirow{2}{*}{} &                      & \multicolumn{11}{l}{$p=1$}                                                                                        &  & \multicolumn{11}{l}{$p=2$}   \\   
\cline{3-13} \cline{15-25} 
                 \multirow{2}{*}{\diagbox[innerwidth=0.8cm]{\hspace*{0.15cm}$\tau$}{$n$}}   & \multicolumn{1}{c}{} & \multicolumn{2}{l}{$8$} &  & \multicolumn{2}{l}{$16$} &  & \multicolumn{2}{l}{$32$} &  & \multicolumn{2}{l}{$64$} &  & \multicolumn{2}{l}{$8$} &  & \multicolumn{2}{l}{$16$} &  & \multicolumn{2}{l}{$32$} &  & \multicolumn{2}{l}{$64$} \\ \cline{3-4} \cline{6-7} \cline{9-10} \cline{12-13} \cline{15-16} \cline{18-19} \cline{21-22} \cline{24-25} 
           &                      & J         & GS           &  & J          & GS           &  & J          & GS           &  & J          & GS           &  & J         & GS           &  & J          & GS           &  & J          & GS           &  & J          & GS           \\ \hline

$10^{-4}$ &&     $15$ & $7$      &&      $18$ & $9$        &&        $19$ & $11$        &&       $21$ & $12$         
          &&     $4$  & $5$      &&      $7$  & $6$        &&        $10$ & $8$         &&       $12$ & $9$           \\
$10^{-3}$ &&     $15$ & $7$      &&      $18$ & $10$       &&        $20$ & $11$        &&       $21$ & $12$           
          &&     $4$  & $5$      &&      $7$  & $6$        &&        $10$ & $8$         &&       $11$ & $10$           \\
$10^{-2}$ &&     $15$ & $7$      &&      $17$ & $10$       &&        $18$ & $11$        &&       $26$ & $13$           
          &&     $5$  & $5$      &&      $8$  & $7$        &&        $11$ & $9$         &&       $12$ & $10$           \\
$10^{-1}$ &&     $14$ & $7$      &&      $17$ & $10$       &&        $19$ & $11$        &&       $13$ & $13$          
          &&     $5$  & $5$      &&      $8$  & $7$        &&        $11$ & $9$         &&       $10$ & $10$           \\
$1$       &&     $14$ & $8$      &&      $16$ & $10$       &&        $19$ & $12$        &&       $24$ & $13$           
          &&     $6$  & $5$      &&      $8$  & $7$        &&        $11$ & $9$         &&       $12$ & $10$           \\
$10$      &&     $11$ & $7$      &&      $18$ & $8$        &&        $19$ & $10$        &&       $20$ & $12$           
          &&     $3$  & $3$      &&      $6$  & $5$        &&        $9$  & $8$         &&       $11$ & $9$           \\
$10^2$    &&     $5$  & $3$      &&      $10$ & $5$        &&        $15$ & $7$         &&       $15$ & $9$          
          &&     $2$  & $1$      &&      $2$  & $2$        &&        $4$  & $4$         &&       $7$  & $6$           \\
$10^3$    &&     $2$  & $1$      &&      $3$  & $2$        &&        $6$  & $4$         &&       $11$ & $6$           
          &&     $1$  & $1$      &&      $1$  & $1$        &&        $2$  & $1$         &&       $3$  & $2$           \\
$10^4$    &&     $1$  & $1$      &&      $1$  & $1$        &&        $2$  & $1$         &&       $3$  & $2$  
          &&     $1$  & $1$      &&      $1$  & $1$        &&        $1$  & $1$         &&       $1$  & $1$          
\end{tabular}
\end{subtable}
\end{adjustbox}

\vspace{.5cm}

\begin{adjustbox}{width=1.\textwidth}
\begin{subtable}{1.\textwidth} 
    \centering 
    \hspace*{-2. cm}
\begin{tabular}{c p{0.cm} p{.5cm}p{.5cm} p{0.cm} p{.5cm}p{.5cm} p{0.cm} p{.5cm}p{.5cm} p{0.cm} p{.5cm}p{.5cm}p{0.cm} p{.5cm}p{.5cm} p{0.cm} p{.5cm}p{.5cm} p{0.cm} p{.5cm}p{.5cm} p{0.cm} p{.5cm}p{.5cm}}
\cline{2-25}
\multirow{2}{*}{} &                      & \multicolumn{11}{l}{$p=3$}                                                                                        &  & \multicolumn{11}{l}{$p=4$}   \\   
\cline{3-13} \cline{15-25} 
                 \multirow{2}{*}{\diagbox[innerwidth=0.8cm]{\hspace*{0.15cm}$\tau$}{$n$}}   & \multicolumn{1}{c}{} & \multicolumn{2}{l}{$8$} &  & \multicolumn{2}{l}{$16$} &  & \multicolumn{2}{l}{$32$} &  & \multicolumn{2}{l}{$64$} &  & \multicolumn{2}{l}{$8$} &  & \multicolumn{2}{l}{$16$} &  & \multicolumn{2}{l}{$32$} &  & \multicolumn{2}{l}{$64$} \\ \cline{3-4} \cline{6-7} \cline{9-10} \cline{12-13} \cline{15-16} \cline{18-19} \cline{21-22} \cline{24-25} 
           &                      & J         & GS           &  & J          & GS           &  & J          & GS           &  & J          & GS           &  & J         & GS           &  & J          & GS           &  & J          & GS           &  & J          & GS           \\ \hline

$10^{-4}$ &&     $4$ & $4$      &&      $5$ & $4$       &&        $6$ & $6$        &&       $9$  & $8$         
          &&     $4$ & $3$      &&      $3$ & $4$       &&        $4$ & $5$        &&       $7$  & $6$           \\
$10^{-3}$ &&     $4$ & $5$      &&      $5$ & $4$       &&        $6$ & $6$        &&       $9$  & $8$           
          &&     $5$ & $3$      &&      $3$ & $4$       &&        $5$ & $5$        &&       $7$  & $6$            \\
$10^{-2}$ &&     $5$ & $4$      &&      $5$ & $4$       &&        $6$ & $7$        &&       $9$  & $9$           
          &&     $5$ & $5$      &&      $4$ & $5$       &&        $5$ & $5$        &&       $7$  & $6$            \\
$10^{-1}$ &&     $5$ & $5$      &&      $5$ & $5$       &&        $7$ & $7$        &&       $8$  & $8$          
          &&     $3$ & $3$      &&      $5$ & $6$       &&        $6$ & $5$        &&       $7$  & $7$            \\
$1$       &&     $3$ & $3$      &&      $6$ & $5$       &&        $7$ & $7$        &&       $10$ & $9$           
          &&     $2$ & $2$      &&      $4$ & $4$       &&        $6$ & $5$        &&       $7$  & $7$            \\
$10$      &&     $2$ & $2$      &&      $3$ & $2$       &&        $5$ & $5$        &&       $7$  & $7$           
          &&     $2$ & $2$      &&      $2$ & $2$       &&        $3$ & $3$        &&       $5$  & $5$            \\
$10^2$    &&     $2$ & $1$      &&      $2$ & $2$       &&        $2$ & $2$        &&       $3$  & $3$          
          &&     $2$ & $2$      &&      $2$ & $2$       &&        $2$ & $2$        &&       $2$  & $2$            \\
$10^3$    &&     $1$ & $1$      &&      $1$ & $1$       &&        $2$ & $1$        &&       $2$  & $2$           
          &&     $1$ & $1$      &&      $1$ & $1$       &&        $2$ & $1$        &&       $2$  & $2$            \\
$10^4$    &&     $1$ & $1$      &&      $1$ & $1$       &&        $1$ & $1$        &&       $1$  & $1$  
          &&     $1$ & $1$      &&      $1$ & $1$       &&        $1$ & $1$        &&       $1$  & $1$      
\end{tabular}
\end{subtable}
\end{adjustbox}

\vspace{.5cm}

\begin{adjustbox}{width=1.\textwidth}
\begin{subtable}{1.\textwidth} 
    \centering 
    \hspace*{-2. cm}
\begin{tabular}{c p{0.cm} p{.5cm}p{.5cm} p{0.cm} p{.5cm}p{.5cm} p{0.cm} p{.5cm}p{.5cm} p{0.cm} p{.5cm}p{.5cm}p{0.cm} p{.5cm}p{.5cm} p{0.cm} p{.5cm}p{.5cm} p{0.cm} p{.5cm}p{.5cm} p{0.cm} p{.5cm}p{.5cm}}
\cline{2-25}
\multirow{2}{*}{} &                      & \multicolumn{11}{l}{$p=5$}                                                                                        &  & \multicolumn{11}{l}{$p=6$}   \\   
\cline{3-13} \cline{15-25} 
                 \multirow{2}{*}{\diagbox[innerwidth=0.8cm]{\hspace*{0.15cm}$\tau$}{$n$}}   & \multicolumn{1}{c}{} & \multicolumn{2}{l}{$8$} &  & \multicolumn{2}{l}{$16$} &  & \multicolumn{2}{l}{$32$} &  & \multicolumn{2}{l}{$64$} &  & \multicolumn{2}{l}{$8$} &  & \multicolumn{2}{l}{$16$} &  & \multicolumn{2}{l}{$32$} &  & \multicolumn{2}{l}{$64$} \\ \cline{3-4} \cline{6-7} \cline{9-10} \cline{12-13} \cline{15-16} \cline{18-19} \cline{21-22} \cline{24-25} 
           &                      & J         & GS           &  & J          & GS           &  & J          & GS           &  & J          & GS           &  & J         & GS           &  & J          & GS           &  & J          & GS           &  & J          & GS           \\ \hline

$10^{-4}$ &&     $4$ & $4$      &&      $3$ & $5$       &&        $4$ & $5$        &&       $5$ & $6$         
          &&     $4$ & $4$      &&      $4$ & $4$       &&        $4$ & $4$        &&       $5$ & $5$            \\
$10^{-3}$ &&     $5$ & $4$      &&      $4$ & $5$       &&        $4$ & $5$        &&       $6$ & $6$           
          &&     $5$ & $4$      &&      $4$ & $5$       &&        $4$ & $4$        &&       $6$ & $6$            \\
$10^{-2}$ &&     $5$ & $4$      &&      $5$ & $5$       &&        $4$ & $5$        &&       $6$ & $6$           
          &&     $5$ & $4$      &&      $5$ & $5$       &&        $5$ & $5$        &&       $6$ & $5$          \\
$10^{-1}$ &&     $3$ & $2$      &&      $5$ & $5$       &&        $4$ & $5$        &&       $6$ & $6$          
          &&     $5$ & $3$      &&      $5$ & $5$       &&        $5$ & $5$        &&       $6$ & $6$            \\
$1$       &&     $2$ & $2$      &&      $3$ & $3$       &&        $4$ & $4$        &&       $6$ & $6$           
          &&     $2$ & $2$      &&      $3$ & $3$       &&        $4$ & $5$        &&       $6$ & $6$            \\
$10$      &&     $2$ & $2$      &&      $2$ & $2$       &&        $3$ & $3$        &&       $4$ & $3$           
          &&     $2$ & $2$      &&      $2$ & $2$       &&        $3$ & $3$        &&       $4$ & $3$            \\
$10^2$    &&     $2$ & $2$      &&      $2$ & $2$       &&        $2$ & $2$        &&       $2$ & $2$          
          &&     $2$ & $2$      &&      $2$ & $2$       &&        $2$ & $2$        &&       $2$ & $2$            \\
$10^3$    &&     $1$ & $1$      &&      $2$ & $1$       &&        $2$ & $2$        &&       $2$ & $2$           
          &&     $1$ & $1$      &&      $2$ & $1$       &&        $2$ & $2$        &&       $2$ & $2$            \\
$10^4$    &&     $1$ & $1$      &&      $1$ & $1$       &&        $1$ & $1$        &&       $2$ & $1$  
          &&     $1$ & $1$      &&      $1$ & $1$       &&        $1$ & $1$        &&       $2$ & $1$        
\end{tabular}
\end{subtable}
\end{adjustbox}
\caption{$3$-$d$ ASP-GLT preconditioning for $\bm{H}_0(div, \Omega)$: \CG iterations with Jacobi (J) and Gauss-Seidel (GS) smoothing. The ASP preconditioner is defined by \eqref{eq:asp-div}, with $\bm{D}_{\bm{curl}}^{-1}$ replaced by \eqref{eq:div-second-smoother-GS}. The right-hand side function is defined by \eqref{eq:3d-div-right-hand-side}. Parameter values are set to $\nu_1=1$, $\nu_2=p^3$, and $\nu_{asp}=3$.}
\label{tab:3d-div-cgn-GS-glt}  
\end{table}
\subsubsection{Test 3: ASP and p-dependency}\label{sec:glt}
This section introduces a modified version of our Auxiliary Space Preconditioning method that addresses the issue related to the dependency with respect to $B$-Splines degree. Specifically, an additional smoother is applied to control the $p$-dependency of the preconditioner. The construction of the smoother is based on the theory of Generalized Locally Toeplitz (GLT) and utilizes the spectral information of the  involved matrices,  as discussed in \cite{mazza2019isogeometric}. For this purpose, we decompose the ASP preconditioner as follows:
$$
\bm{B}_{\mathcal{D}} = \bm{S}^{-1}_{\mathcal{D}} + \bm{K}_{\mathcal{D}}.
$$  

The suggested algorithm is as follows: 

\begin{minipage}{\textwidth}
\begin{algorithm}[H]
\DontPrintSemicolon
\SetAlgoLined
\SetKwInOut{Input}{Input}\SetKwInOut{Output}{Output}
\Input{$\bm{A}$: the matrix related to the IgA discretization of \eqref{introduction-variational-formulation}; $\bm{b}$: a given vector; $\bm{x}$: a starting point; $\nu_1$: the number of Jacobi (J) or Gauss-Seidel (GS) iterations; $\nu_2$: the number of GLT iterations; $\nu_{ASP}$: the number of ASP iterations.}
\Output{$\bm{x}$: the approximate solution of $\bm{A}_{\mathcal{D}} \bm{x} = \bm{b}$.}
\bigskip

$k \gets 0$;

\While{$k \leq \nu_{ASP}$ \textbf{and} not converged}{
$\bm{x} \gets \mbox{\texttt{smoother}}_1(\bm{A}_{\mathcal{D}},\bm{b}, \bm{x},\nu_1)$ \tcp*{Apply J or GS smoother.}
$\bm{x} \gets \mbox{\texttt{smoother}}_2(\bm{A}_{\mathcal{D}},\bm{b},\bm{x}, \nu_2)$ \tcp*{Apply GLT smoother.}
$\bm{d} \gets \bm{b} - \bm{A}_{\mathcal{D}} \bm{x}$ \tcp*{Compute the defect.}
$\bm{x}_c \gets \bm{K}_{\mathcal{D}} \bm{d}$ \tcp*{Compute the ASP correction.}
$\bm{x} \gets \bm{x} + \bm{x}_c$ \tcp*{Update the solution.}
$k \gets k + 1$;
}
\caption{{\sc ASP-GLT}: Preconditioning for $\bm{V}_h(\mathcal{D},\Omega)$.}
\label{algo:asp-opt}
\end{algorithm}
\end{minipage}

In the simplest case, we can select the inverse of the mass matrix as the GLT smoother, and we use this approach in the numerical tests developed in this subsection. 

To investigate the impact of $p$-refinement on the convergence of Algorithm \ref{algo:asp-opt}, we report in Table \ref{tab:cgn-glt} the number of \CG iterations as a function of the $B$-spline degree, with fixed parameter values $n=64$, $\tau = 10^{-4}$, $\nu_1=1$, and $\nu_{asp}=3$. In the GLT smoother step, we employ the MINRES solver with $\nu_2=p^2$ iterations; the numerical value of $\nu_2$ is motivated by analytical results for the Poisson equation \cite{hofreither2015mass}. We consider four cases: unpreconditioned problem, ASP preconditioning with Jacobi smoothing, ASP preconditioning with Gauss-Seidel smoothing, and the optimal ASP algorithm (ASP-GLT). As expected, the number of iterations in the ASP-GLT case appears to be well-behaved with respect to $p$, meaning that it remains bounded as $p$ increases. In contrast, the other cases show a much stronger dependence on $p$, leading to a higher number of iterations as $p$ increases. In the case of the unpreconditioned problem, this increase is particularly dramatic.

\subsection{Three dimensional tests.} We will now consider the three-dimensional case, where our computational domain is a unit cube $\Omega = (0,1)$ that has been subdivided into $n \times n \times n$ sub-domains. To test our algorithm's effectiveness, we will be using the optimal Algorithm \ref{algo:asp-opt}. Similar to the two-dimensional case, we will be using the Conjugate Gradient (CG) method to solve the IgA discrete system related to \eqref{introduction-variational-formulation}. Both un-preconditioned and preconditioned systems will be tested, with the stopping criteria given by \eqref{eq:stopping-criterion} and the initial guess set to the zero vector. We will employ the MINRES solver with $\nu_2=p^3$ iterations in the GLT smoother step.

For clarity, we will consider the $\bm{curl}$ and $div$ problems separately.

\subsubsection{Test 4: the $3$-$d$ $\bm{H}_0(\bm{curl}, \Omega)$ problem} In this test we consider problem \eqref{eq:curl-curl} subjects to Dirichlet boundary conditions with a right-hand side function given by 
\begin{equation}\label{eq:3d-curl-right-hand-side}
\bm{f}(x_1,x_2,x_3) = (x_1, x_2, x_3), \quad (x_1,x_2,x_3) \in (0,1)^3.
\end{equation}
Table \ref{tab:3d-curl-cgn-none} shows the number of CG iterations for the unpreconditioned system, which allows for comparing the performance of the ASP-GLT algorithm for $3$-$d$ $\bm{curl}-\bm{curl}$ problems. In contrast, Table \ref{tab:3d-curl-cgn-glt} provides the results for the preconditioned system with both Jacobi and Gauss-Seidel smoothing, and for varying values of $\tau$, $n$, and $p$.

By examining the results presented in these tables, one can evaluate the efficiency and effectiveness of the ASP-GLT algorithm for solving $3$-$d$ $\bm{curl}-\bm{curl}$ problems. Specifically, the preconditioned system leads to a significantly lower number of iterations compared to the unpreconditioned system. Moreover, the number of iterations is largely independent of the system parameters, such as $\tau$, $n$, and $p$, which highlights the robustness of the algorithm. These results also allow for choosing the most suitable smoother, which, as in the $2$-$d$ case, is the Gauss-Seidel smoother.

\subsubsection{Test 5: the $3$-$d$ $\bm{H}_0(div, \Omega)$ problem} We now consider the $3$-$d$ $\bm{H}_0(div, \Omega)$ problem. Following a similar approach as the previous test, we study problem \eqref{eq:div-div} subject to Dirichlet boundary conditions with a right-hand side function given by
\begin{equation}\label{eq:3d-div-right-hand-side}
\bm{f}(x_1,x_2,x_3) = (x_2 x_3, x_1x_3, x_1 x_2), \quad (x_1,x_2,x_3) \in (0,1)^3.
\end{equation}
We focus on the \CG iterations of the preconditioned system, as the \CG iteration count in the case of the unpreconditioned system is similar to that of the $\bm{H}_0(\bm{curl}, \Omega)$ problem, and hence, we omit it. The results are shown in Table \ref{tab:3d-div-cgn-glt}. 

As it can be observed, the ASP-GLT algorithm shows different behavior compared to the $\bm{H}_0(\bm{curl}, \Omega)$ problem. In fact, Table \ref{tab:3d-div-cgn-glt} shows that although the ASP algorithm significantly reduces the number of iterations required, this number increases with the degree $p$. One solution to this issue is to replace the matrix $\bm{D}_{\bm{curl}}^{-1}$ in \eqref{eq:asp-div} with the symmetric Gauss-Seidel matrix associated with the matrix defined in \eqref{eq:div-second-smoother}. Specifically, we can replace it with:
\begin{equation}\label{eq:div-second-smoother-GS}
\bm{L}_{\bm{Q}_{\bm{curl}}}^{-1} - \bm{L}_{\bm{Q}_{\bm{curl}}}^{-1} \bm{Q}_{\bm{curl}} \bm{U}_{\bm{Q}_{\bm{curl}}}^{-1} + \bm{U}_{\bm{Q}_{\bm{curl}}}^{-1}.
\end{equation}
As usual, $\bm{L}_{\bm{Q}{\bm{curl}}}$ and $\bm{U}_{\bm{Q}{\bm{curl}}}$ represent the lower and upper parts of the matrix $\bm{Q}_{\bm{curl}}$, respectively.

We evaluated the effectiveness of our new strategy by conducting experiments, the results of which are summarized in Table \ref{tab:3d-div-cgn-GS-glt}. The table presents the number of \CG iterations required for various combinations of parameters $\tau$, $n$, and $p$, using both Jacobi and Gauss-Seidel smoothing schemes. 

The results presented in Table \ref{tab:3d-div-cgn-GS-glt} demonstrate that our proposed approach yields significant improvements compared to the results presented in Table \ref{tab:3d-div-cgn-glt}. Specifically, by replacing the diagonal matrix $\bm{D}_{\bm{curl}}^{-1}$ with \eqref{eq:div-second-smoother-GS}, we were able to improve the performance of our solver and control the $p$-dependency.

It is worth noting that in all of the developed tests, we used a MINRES solver in the GLT smoothing step. We tried several solvers, including GMRS and BICGSTAB, but the MINRES solver provided the best performance. However, there is another solver that yielded even more satisfactory results and can serve as an alternative to using the smoother matrix \eqref{eq:div-second-smoother-GS}: the flexible GCROT solver (see \cite{de1999truncation,hicken2010simplified}). In the following subsection, we will demonstrate the performance of using the flexible GCROT solver in the GLT smoothing step for both the $\bm{H}_0(\bm{curl}, \Omega)$ and $\bm{H}_0(div, \Omega)$ problems.

\subsubsection{Test 5:  evaluation of the $3$-$d$ ASP-GLT algorithm using a GCROT solver in the GLT smoothing step}
In this test, we consider the case studies of {\em Test 4} and {\em Test 5}. We consider only the case of a Jacobi smoother. In the case of $div$ problem the ASP preconditioner is the one given by \eqref{eq:asp-div}. The numerical results are shown in Table \ref{tab:3d-GCROT}. 

The results indicate that using the flexible GCROT algorithm during the GLT smoothing step significantly improves the performance of the ASP-GLT algorithm, particularly in the case of the $\bm{curl}$ problem. In fact, in this case, the algorithm behaves like a direct method; converging in just one iteration step.

\begin{table}[H]

\begin{adjustbox}{width=1.\textwidth}
\begin{subtable}{1.\textwidth} 
    \centering 
    \hspace*{-2.5 cm}
\begin{tabular}{c p{0.cm} cc p{0.cm} cc p{0.cm} cc p{0.cm} ccp{0.cm} cc p{0.cm} cc p{0.cm} cc p{0.cm} cc}
\cline{2-25}
\multirow{2}{*}{} &                      & \multicolumn{11}{l}{$p=3$}                                                                                        &  & \multicolumn{11}{l}{$p=4$}   \\   
\cline{3-13} \cline{15-25} 
                 \multirow{2}{*}{\diagbox[innerwidth=0.8cm]{\hspace*{0.15cm}$\tau$}{$n$}}   & \multicolumn{1}{c}{} & \multicolumn{2}{l}{$8$} &  & \multicolumn{2}{l}{$16$} &  & \multicolumn{2}{l}{$32$} &  & \multicolumn{2}{l}{$64$} &  & \multicolumn{2}{l}{$8$} &  & \multicolumn{2}{l}{$16$} &  & \multicolumn{2}{l}{$32$} &  & \multicolumn{2}{l}{$64$} \\ \cline{3-4} \cline{6-7} \cline{9-10} \cline{12-13} \cline{15-16} \cline{18-19} \cline{21-22} \cline{24-25} 
            &                      & curl         & div           &  & curl          & div           &  & curl          & div           &  & curl          & div           &  & curl         & div           &  & curl          & div           &  & curl          & div           &  & curl          & div           \\ \hline

$10^{-4}$ &&       $1$ & $1$       &&        $1$ & $1$         &&         $1$ & $1$          &&        $1$ & $1$          
          &&       $1$ & $3$       &&        $1$ & $1$         &&         $1$ & $1$          &&        $1$ & $1$            \\
$10^{-3}$ &&       $1$ & $2$       &&        $1$ & $2$         &&         $1$ & $1$          &&        $1$ & $3$            
          &&       $1$ & $1$       &&        $1$ & $2$         &&         $1$ & $2$          &&        $1$ & $5$            \\
$10^{-2}$ &&       $1$ & $1$       &&        $1$ & $1$         &&         $1$ & $2$          &&        $1$ & $4$            
          &&       $1$ & $1$       &&        $1$ & $1$         &&         $1$ & $2$          &&        $1$ & $2$            \\
$10^{-1}$ &&       $1$ & $2$       &&        $1$ & $1$         &&         $1$ & $1$          &&        $1$ & $2$           
          &&       $1$ & $1$       &&        $1$ & $1$         &&         $1$ & $1$          &&        $1$ & $2$            \\
$1$       &&       $1$ & $1$       &&        $1$ & $1$         &&         $1$ & $1$          &&        $1$ & $2$            
          &&       $1$ & $1$       &&        $1$ & $1$         &&         $1$ & $1$          &&        $1$ & $1$            \\
$10$      &&       $1$ & $2$       &&        $1$ & $1$         &&         $1$ & $1$          &&        $1$ & $1$            
          &&       $1$ & $1$       &&        $1$ & $1$         &&         $1$ & $1$          &&        $1$ & $2$            \\
$10^2$    &&       $1$ & $1$       &&        $1$ & $2$         &&         $1$ & $2$          &&        $1$ & $1$           
          &&       $1$ & $2$       &&        $1$ & $2$         &&         $1$ & $2$          &&        $1$ & $2$            \\
$10^3$    &&       $1$ & $1$       &&        $1$ & $1$         &&         $1$ & $1$          &&        $1$ & $1$            
          &&       $1$ & $1$       &&        $1$ & $1$         &&         $1$ & $1$          &&        $1$ & $1$            \\
$10^4$    &&       $1$ & $1$       &&        $1$ & $1$         &&         $1$ & $1$          &&        $1$ & $1$   
          &&       $1$ & $1$       &&        $1$ & $1$         &&         $1$ & $1$          &&        $1$ & $1$      
\end{tabular}
\end{subtable}
\end{adjustbox}

\vspace{.5cm}

\begin{adjustbox}{width=1.\textwidth}
\begin{subtable}{1.\textwidth} 
    \centering 
    \hspace*{-2.5 cm}
\begin{tabular}{c p{0.cm} cc p{0.cm} cc p{0.cm} cc p{0.cm} ccp{0.cm} cc p{0.cm} cc p{0.cm} cc p{0.cm} cc}
\cline{2-25}
\multirow{2}{*}{} &                      & \multicolumn{11}{l}{$p=5$}                                                                                        &  & \multicolumn{11}{l}{$p=6$}   \\   
\cline{3-13} \cline{15-25} 
                 \multirow{2}{*}{\diagbox[innerwidth=0.8cm]{\hspace*{0.15cm}$\tau$}{$n$}}   & \multicolumn{1}{c}{} & \multicolumn{2}{l}{$8$} &  & \multicolumn{2}{l}{$16$} &  & \multicolumn{2}{l}{$32$} &  & \multicolumn{2}{l}{$64$} &  & \multicolumn{2}{l}{$8$} &  & \multicolumn{2}{l}{$16$} &  & \multicolumn{2}{l}{$32$} &  & \multicolumn{2}{l}{$64$} \\ \cline{3-4} \cline{6-7} \cline{9-10} \cline{12-13} \cline{15-16} \cline{18-19} \cline{21-22} \cline{24-25} 
            &                      & curl         & div           &  & curl          & div           &  & curl          & div           &  & curl          & div           &  & curl         & div           &  & curl          & div           &  & curl          & div           &  & curl          & div           \\ \hline

$10^{-4}$ &&       $1$ & $3$       &&        $1$ & $2$         &&         $1$ & $1$          &&        $1$ & $1$          
          &&       $1$ & $3$       &&        $1$ & $3$         &&         $1$ & $2$          &&        $1$ & $4$            \\
$10^{-3}$ &&       $1$ & $1$       &&        $1$ & $2$         &&         $1$ & $2$          &&        $1$ & $2$            
          &&       $1$ & $1$       &&        $1$ & $2$         &&         $1$ & $1$          &&        $1$ & $2$            \\
$10^{-2}$ &&       $1$ & $1$       &&        $1$ & $1$         &&         $1$ & $1$          &&        $1$ & $3$            
          &&       $1$ & $1$       &&        $1$ & $1$         &&         $1$ & $1$          &&        $1$ & $7$            \\
$10^{-1}$ &&       $1$ & $2$       &&        $1$ & $1$         &&         $1$ & $1$          &&        $1$ & $1$           
          &&       $1$ & $1$       &&        $1$ & $1$         &&         $1$ & $1$          &&        $1$ & $2$            \\
$1$       &&       $1$ & $1$       &&        $1$ & $1$         &&         $1$ & $1$          &&        $1$ & $1$            
          &&       $1$ & $2$       &&        $1$ & $1$         &&         $1$ & $1$          &&        $1$ & $2$            \\
$10$      &&       $1$ & $2$       &&        $1$ & $1$         &&         $1$ & $1$          &&        $1$ & $2$            
          &&       $1$ & $2$       &&        $1$ & $2$         &&         $1$ & $1$          &&        $1$ & $2$            \\
$10^2$    &&       $1$ & $2$       &&        $1$ & $2$         &&         $1$ & $2$          &&        $1$ & $2$           
          &&       $1$ & $2$       &&        $1$ & $2$         &&         $1$ & $2$          &&        $1$ & $2$            \\
$10^3$    &&       $1$ & $1$       &&        $1$ & $1$         &&         $1$ & $1$          &&        $1$ & $1$            
          &&       $1$ & $1$       &&        $1$ & $1$         &&         $1$ & $2$          &&        $1$ & $2$           \\
$10^4$    &&       $1$ & $1$       &&        $1$ & $1$         &&         $1$ & $1$          &&        $1$ & $1$  
          &&       $1$ & $1$       &&        $1$ & $1$         &&         $1$ & $1$          &&        $1$ & $1$         
\end{tabular}
\end{subtable}
\end{adjustbox}
\caption{$3$-$d$ ASP-GLT preconditioning: \CG iterations with Jacobi smoothing are used for $\bm{H}_0(\bm{curl}, \Omega)$ (curl) and $\bm{H}_0(div, \Omega)$ (div) cases. The GLT smoothing step uses the flexible GCROT solver. The right-hand side functions are defined by \eqref{eq:3d-curl-right-hand-side} and \eqref{eq:3d-div-right-hand-side}. Parameter values are set to $\nu_1=1$, $\nu_2=p^2$, and $\nu_{asp}=1$.}  
\label{tab:3d-GCROT}  
\end{table}

\section{Conclusions}\label{conclusions}
In this work, we have proposed the use of the Auxiliary Space Preconditioning method (ASP) as a preconditioning strategy for linear $\bm{H}(\bm{curl}, \Omega)$ and $\bm{H}(div, \Omega)$ elliptic problems in Isogeometric Analysis (IgA). Our contributions include a uniform, stable, and regular decomposition of the IgA discrete spaces, and a proof of the mesh-independent effectiveness of the preconditioners using abstract theory by R. Hiptmair and J. Xu \cite{hiptmair2007nodal}. Our numerical simulations, conducted in two and three spatial dimensions, have confirmed the practical usefulness of our approach. Specifically, we have shown that our preconditioner significantly reduces the spectral condition number and that the number of conjugate gradient iterations required for convergence is independent of the discretization parameter. Additionally, we have demonstrated that the resulting algorithm can be easily extended to a $p$-stable algorithm. Our results suggest that the proposed preconditioning method is a promising candidate for future applications in isogeometric analysis.

We conclude this paper by presenting some future research perspectives selected from several possible ones. First, our results only apply to a parametric domain. Therefore, extending our method to more general physical domains is currently under development and will be the subject of future work. Another research direction involves extending our results to the case of variant coefficients, which is a wide-open subject and of great interest to investigate. Finally, our numerical tests indicate that the ASP-GLT algorithm has been highly effective. Therefore, a theoretical study of this algorithm would be beneficial, particularly with respect to the choice of solver in the GLT smoothing step, which needs more investigation.



\bibliographystyle{amsplain}

\begin{thebibliography}{10}

\bibitem{abiteboul2011solving}
J~Abiteboul, G~Latu, V~Grandgirard, A~Ratnani, E~Sonnendr{\"u}cker, and
  A~Strugarek, \emph{Solving the vlasov equation in complex geometries}, ESAIM:
  Proceedings, vol.~32, EDP Sciences, 2011, pp.~103--117.

\bibitem{adams2003sobolev}
Robert~A Adams and John~JF Fournier, \emph{Sobolev spaces}, Elsevier, 2003.

\bibitem{amrouche1998vector}
Cherif Amrouche, Christine Bernardi, Monique Dauge, and Vivette Girault,
  \emph{Vector potentials in three-dimensional non-smooth domains},
  Mathematical Methods in the Applied Sciences \textbf{21} (1998), no.~9,
  823--864.

\bibitem{back2011axisymmetric}
Aurore Back, Ana{\"\i}s Crestetto, Ahmed Ratnani, and Eric Sonnendr{\"u}cker,
  \emph{An axisymmetric pic code based on isogeometric analysis}, Esaim:
  proceedings, vol.~32, EDP Sciences, 2011, pp.~118--133.

\bibitem{bazilevs2006isogeometric}
Yuri Bazilevs, L~Beirao~da Veiga, J~Austin Cottrell, Thomas~JR Hughes, and
  Giancarlo Sangalli, \emph{Isogeometric analysis: approximation, stability and
  error estimates for h-refined meshes}, Mathematical Models and Methods in
  Applied Sciences \textbf{16} (2006), no.~07, 1031--1090.

\bibitem{bazilevs2012isogeometric}
Yuri Bazilevs, M-C Hsu, and MA3003060 Scott, \emph{Isogeometric
  fluid--structure interaction analysis with emphasis on non-matching
  discretizations, and with application to wind turbines}, Computer Methods in
  Applied Mechanics and Engineering \textbf{249} (2012), 28--41.

\bibitem{beirao2013bddc}
L~Beir{\~a}o~da Veiga, Durkbin Cho, Luca~F Pavarino, and Simone Scacchi,
  \emph{Bddc preconditioners for isogeometric analysis}, Mathematical Models
  and Methods in Applied Sciences \textbf{23} (2013), no.~06, 1099--1142.

\bibitem{birman1987l2}
M~Sh Birman and Mikhail~Zakharovich Solomyak, \emph{L2-theory of the maxwell
  operator in arbitrary domains}, Russian Mathematical Surveys \textbf{42}
  (1987), no.~6, 75.

\bibitem{bosy2020domain}
Micha{\l} Bosy, Monica Montardini, Giancarlo Sangalli, and Mattia Tani, \emph{A
  domain decomposition method for isogeometric multi-patch problems with
  inexact local solvers}, Computers \& Mathematics with Applications
  \textbf{80} (2020), no.~11, 2604--2621.

\bibitem{buffa2002traces}
Annalisa Buffa, Martin Costabel, and Dongwoo Sheen, \emph{On traces for h
  (curl, Ω) in lipschitz domains}, Journal of Mathematical Analysis and
  Applications \textbf{276} (2002), no.~2, 845--867.

\bibitem{buffa2011isogeometric2}
Annalisa Buffa, Carlo De~Falco, and G~Sangalli, \emph{Isogeometric analysis:
  stable elements for the 2d stokes equation}, International Journal for
  Numerical Methods in Fluids \textbf{65} (2011), no.~11-12, 1407--1422.

\bibitem{buffa2013bpx}
Annalisa Buffa, Helmut Harbrecht, Angela Kunoth, and Giancarlo Sangalli,
  \emph{Bpx-preconditioning for isogeometric analysis}, Computer Methods in
  Applied Mechanics and Engineering \textbf{265} (2013), 63--70.

\bibitem{buffa2011isogeometric1}
Annalisa Buffa, Judith Rivas, Giancarlo Sangalli, and Rafael V{\'a}zquez,
  \emph{Isogeometric discrete differential forms in three dimensions}, SIAM
  Journal on Numerical Analysis \textbf{49} (2011), no.~2, 818--844.

\bibitem{buffa2011isogeometric}
\bysame, \emph{Isogeometric discrete differential forms in three dimensions},
  SIAM Journal on Numerical Analysis \textbf{49} (2011), no.~2, 818--844.

\bibitem{buffa2010isogeometric}
Annalisa Buffa, Giancarlo Sangalli, and Rafael V{\'a}zquez, \emph{Isogeometric
  analysis in electromagnetics: B-splines approximation}, Computer Methods in
  Applied Mechanics and Engineering \textbf{199} (2010), no.~17-20, 1143--1152.

\bibitem{buffa2014isogeometric}
\bysame, \emph{Isogeometric methods for computational electromagnetics:
  B-spline and t-spline discretizations}, Journal of Computational Physics
  \textbf{257} (2014), 1291--1320.

\bibitem{chen2015auxiliary}
Long Chen, Junping Wang, Yanqiu Wang, and Xiu Ye, \emph{An auxiliary space
  multigrid preconditioner for the weak galerkin method}, Computers \&
  Mathematics with Applications \textbf{70} (2015), no.~4, 330--344.

\bibitem{cho2020optimal}
Durkbin Cho, \emph{Optimal multilevel preconditioners for isogeometric
  collocation methods}, Mathematics and Computers in Simulation \textbf{168}
  (2020), 76--89.

\bibitem{cohen2001geometric}
Elaine Cohen, Richard~F Riesenfeld, and Gershon Elber, \emph{Geometric modeling
  with splines: an introduction}, CRC Press, 2001.

\bibitem{costabel1990remark}
Martin Costabel, \emph{A remark on the regularity of solutions of maxwell's
  equations on lipschitz domains}, Mathematical Methods in the Applied Sciences
  \textbf{12} (1990), no.~4, 365--368.

\bibitem{cottrell2009isogeometric}
J~Austin Cottrell, Thomas~JR Hughes, and Yuri Bazilevs, \emph{Isogeometric
  analysis: toward integration of cad and fea}, John Wiley \& Sons, 2009.

\bibitem{crouseilles2012isogeometric}
Nicolas Crouseilles, Ahmed Ratnani, and Eric Sonnendr{\"u}cker, \emph{An
  isogeometric analysis approach for the study of the gyrokinetic
  quasi-neutrality equation}, Journal of Computational Physics \textbf{231}
  (2012), no.~2, 373--393.

\bibitem{da2014mathematical}
L~Beirao Da~Veiga, Annalisa Buffa, Giancarlo Sangalli, and Rafael V{\'a}zquez,
  \emph{Mathematical analysis of variational isogeometric methods}, Acta
  Numerica \textbf{23} (2014), 157.

\bibitem{da2013isogeometric}
L~Beirao Da~Veiga, D~Cho, LF~Pavarino, and S~Scacchi, \emph{Isogeometric
  schwarz preconditioners for linear elasticity systems}, Computer Methods in
  Applied Mechanics and Engineering \textbf{253} (2013), 439--454.

\bibitem{da2012overlapping}
L~Beirao Da~Veiga, D~Cho, Luca~F Pavarino, and Simone Scacchi,
  \emph{Overlapping schwarz methods for isogeometric analysis}, SIAM Journal on
  Numerical Analysis \textbf{50} (2012), no.~3, 1394--1416.

\bibitem{da2014isogeometric}
L~Beirao Da~Veiga, Luca~F Pavarino, Simone Scacchi, Olof~B Widlund, and Stefano
  Zampini, \emph{Isogeometric bddc preconditioners with deluxe scaling}, SIAM
  Journal on Scientific Computing \textbf{36} (2014), no.~3, A1118--A1139.

\bibitem{da2017adaptive}
\bysame, \emph{Adaptive selection of primal constraints for isogeometric bddc
  deluxe preconditioners}, SIAM Journal on Scientific Computing \textbf{39}
  (2017), no.~1, A281--A302.

\bibitem{de2021two}
{\'A}lvaro~P{\'e} de~la Riva, Carmen Rodrigo, and Francisco~J Gaspar, \emph{A
  two-level method for isogeometric discretizations based on multiplicative
  schwarz iterations}, Computers \& Mathematics with Applications \textbf{100}
  (2021), 41--50.

\bibitem{de1999truncation}
Eric De~Sturler, \emph{Truncation strategies for optimal krylov subspace
  methods}, SIAM Journal on Numerical Analysis \textbf{36} (1999), no.~3,
  864--889.

\bibitem{donatelli2015robust}
Marco Donatelli, Carlo Garoni, Carla Manni, Stefano Serra-Capizzano, and
  Hendrik Speleers, \emph{Robust and optimal multi-iterative techniques for iga
  galerkin linear systems}, Computer Methods in Applied Mechanics and
  Engineering \textbf{284} (2015), 230--264.

\bibitem{farin1999nurbs}
Gerald Farin, \emph{Nurbs for curve \& surface design: From projective geometry
  to practical use}, CRC Press, 1999.

\bibitem{farin2002curves}
Gerald~E Farin and Gerald Farin, \emph{Curves and surfaces for cagd: a
  practical guide}, Morgan Kaufmann, 2002.

\bibitem{gahalaut2013algebraic}
KPS Gahalaut, SK~Tomar, and Johannes~K Kraus, \emph{Algebraic multilevel
  preconditioning in isogeometric analysis: Construction and numerical
  studies}, Computer Methods in Applied Mechanics and Engineering \textbf{266}
  (2013), 40--56.

\bibitem{gahalaut2013multigrid}
Krishan~PS Gahalaut, Johannes~K Kraus, and Satyendra~K Tomar, \emph{Multigrid
  methods for isogeometric discretization}, Computer methods in applied
  mechanics and engineering \textbf{253} (2013), 413--425.

\bibitem{gahalaut2014condition}
Krishan~PS Gahalaut, Satyendra~K Tomar, Craig Douglas, et~al., \emph{Condition
  number estimates for matrices arising in nurbs based isogeometric
  discretizations of elliptic partial differential equations}, arXiv preprint
  arXiv:1406.6808 (2014).

\bibitem{girault2012finite}
Vivette Girault and Pierre-Arnaud Raviart, \emph{Finite element methods for
  navier-stokes equations: theory and algorithms}, vol.~5, Springer Science \&
  Business Media, 2012.

\bibitem{gu2008computational}
Xianfeng~David Gu and Shing-Tung Yau, \emph{Computational conformal geometry},
  vol.~1, International Press Somerville, MA, 2008.

\bibitem{hicken2010simplified}
Jason~E Hicken and David~W Zingg, \emph{A simplified and flexible variant of
  gcrot for solving nonsymmetric linear systems}, SIAM Journal on Scientific
  Computing \textbf{32} (2010), no.~3, 1672--1694.

\bibitem{hiptmair2006auxiliary}
Ralf Hiptmair, Gisela Widmer, and Jun Zou, \emph{Auxiliary space
  preconditioning in h 0 (curl; $\omega$)}, Numerische Mathematik \textbf{103}
  (2006), no.~3, 435--459.

\bibitem{hiptmair2007nodal}
Ralf Hiptmair and Jinchao Xu, \emph{Nodal auxiliary space preconditioning in h
  (curl) and h (div) spaces}, SIAM Journal on Numerical Analysis \textbf{45}
  (2007), no.~6, 2483--2509.

\bibitem{hofreither2015mass}
Clemens Hofreither and Walter Zulehner, \emph{Mass smoothers in geometric
  multigrid for isogeometric analysis}, Curves and Surfaces: 8th International
  Conference, Paris, France, June 12-18, 2014, Revised Selected Papers 8,
  Springer, 2015, pp.~272--279.

\bibitem{hsu2011blood}
Ming-Chen Hsu and Yuri Bazilevs, \emph{Blood vessel tissue prestress modeling
  for vascular fluid--structure interaction simulation}, Finite Elements in
  Analysis and Design \textbf{47} (2011), no.~6, 593--599.

\bibitem{hughes2005isogeometric}
Thomas~JR Hughes, John~A Cottrell, and Yuri Bazilevs, \emph{Isogeometric
  analysis: Cad, finite elements, nurbs, exact geometry and mesh refinement},
  Computer methods in applied mechanics and engineering \textbf{194} (2005),
  no.~39-41, 4135--4195.

\bibitem{kleiss2012ieti}
Stefan~K Kleiss, Clemens Pechstein, Bert J{\"u}ttler, and Satyendra Tomar,
  \emph{Ieti--isogeometric tearing and interconnecting}, Computer Methods in
  Applied Mechanics and Engineering \textbf{247} (2012), 201--215.

\bibitem{kolev2008auxiliary}
Tzanio~V Kolev and Panayot~S Vassilevski, \emph{Auxiliary space amg for h
  (curl) problems}, Domain decomposition methods in science and engineering
  XVII, Springer, 2008, pp.~147--154.

\bibitem{kolev2009parallel}
\bysame, \emph{Parallel auxiliary space amg for h (curl) problems}, Journal of
  Computational Mathematics (2009), 604--623.

\bibitem{konyukhov2012geometrically}
Alexander Konyukhov and Karl Schweizerhof, \emph{Geometrically exact theory for
  contact interactions of 1d manifolds. algorithmic implementation with various
  finite element models}, Computer Methods in Applied Mechanics and Engineering
  \textbf{205} (2012), 130--138.

\bibitem{kunoth2018foundations}
Angela Kunoth, Tom Lyche, Giancarlo Sangalli, Stefano Serra-Capizzano, Tom
  Lyche, Carla Manni, and Hendrik Speleers, \emph{Foundations of spline theory:
  B-splines, spline approximation, and hierarchical refinement}, Splines and
  PDEs: From Approximation Theory to Numerical Linear Algebra: Cetraro, Italy
  2017 (2018), 1--76.

\bibitem{mazza2019isogeometric}
Mariarosa Mazza, Carla Manni, Ahmed Ratnani, Stefano Serra-Capizzano, and
  Hendrik Speleers, \emph{Isogeometric analysis for 2d and 3d curl--div
  problems: Spectral symbols and fast iterative solvers}, Computer Methods in
  Applied Mechanics and Engineering \textbf{344} (2019), 970--997.

\bibitem{mazza2019spectral}
Mariarosa Mazza, Ahmed Ratnani, and Stefano Serra-Capizzano, \emph{Spectral
  analysis and spectral symbol for the 2d curl-curl (stabilized) operator with
  applications to the related iterative solutions}, Mathematics of Computation
  \textbf{88} (2019), no.~317, 1155--1188.

\bibitem{monk2003finite}
Peter Monk et~al., \emph{Finite element methods for maxwell's equations},
  Oxford University Press, 2003.

\bibitem{nepomnyaschikh1991decomposition}
SV~Nepomnyaschikh, \emph{Decomposition and fictitious domains methods for
  elliptic boundary value problems}, Citeseer, 1991.

\bibitem{pasciak2002overlapping}
J~Pasciak and J~Zhao, \emph{Overlapping schwarz methods in h(curl) on nonconvex
  domains, east-west j}, Numer. Anal \textbf{10} (2002), 221--234.

\bibitem{pavarino2016isogeometric}
LF~Pavarino and S~Scacchi, \emph{Isogeometric block feti-dp preconditioners for
  the stokes and mixed linear elasticity systems}, Computer Methods in Applied
  Mechanics and Engineering \textbf{310} (2016), 694--710.

\bibitem{piegl1996nurbs}
Les Piegl and Wayne Tiller, \emph{The nurbs book}, Springer Science \& Business
  Media, 1996.

\bibitem{prautzsch2002bezier}
Hartmut Prautzsch, Wolfgang Boehm, and Marco Paluszny, \emph{B{\'e}zier and
  b-spline techniques}, Springer Science \& Business Media, 2002.

\bibitem{ratnani2012arbitrary}
Ahmed Ratnani and Eric Sonnendr{\"u}cker, \emph{An arbitrary high-order spline
  finite element solver for the time domain maxwell equations}, Journal of
  Scientific Computing \textbf{51} (2012), no.~1, 87--106.

\bibitem{ratnani2012isogeometric}
\bysame, \emph{Isogeometric analysis in reduced magnetohydrodynamics},
  Computational Science \& Discovery \textbf{5} (2012), no.~1, 014007.

\bibitem{rogers2001introduction}
David~F Rogers, \emph{An introduction to nurbs: with historical perspective},
  Morgan Kaufmann, 2001.

\bibitem{sangalli2016isogeometric}
Giancarlo Sangalli and Mattia Tani, \emph{Isogeometric preconditioners based on
  fast solvers for the sylvester equation}, SIAM Journal on Scientific
  Computing \textbf{38} (2016), no.~6, A3644--A3671.

\bibitem{schumaker2007spline}
Larry Schumaker, \emph{Spline functions: basic theory}, Cambridge University
  Press, 2007.

\bibitem{shojaee2012free}
S~Shojaee, E~Izadpanah, N~Valizadeh, and J~Kiendl, \emph{Free vibration
  analysis of thin plates by using a nurbs-based isogeometric approach}, Finite
  Elements in Analysis and Design \textbf{61} (2012), 23--34.

\bibitem{xu1992iterative}
Jinchao Xu, \emph{Iterative methods by space decomposition and subspace
  correction}, SIAM review \textbf{34} (1992), no.~4, 581--613.

\bibitem{zhao2004analysis}
Jun Zhao, \emph{Analysis of finite element approximation and iterative methods
  for time-dependent maxwell problems}, Ph.D. thesis, Texas A\&M University,
  2004.

\end{thebibliography}
\providecommand{\bysame}{\leavevmode\hbox to3em{\hrulefill}\thinspace}
\providecommand{\MR}{\relax\ifhmode\unskip\space\fi MR }
\providecommand{\MRhref}[2]{%
  \href{http://www.ams.org/mathscinet-getitem?mr=#1}{#2}
}
\providecommand{\href}[2]{#2}

\end{document}